\newtheorem{cor}{Corollary}
\newtheorem{thm}{Theorem}
\newtheorem{lem}{Lemma}
\newtheorem{conj}{Conjecture}
\newtheorem{defi}{Definition}
\theoremstyle{remark}
\newtheorem{rmk}{Remark}
\theoremstyle{plain}
\theoremstyle{remark}
\numberwithin{equation}{section}
\def\sumstar{\operatornamewithlimits{\sum\nolimits^*}}
\begin{document}

%\mbox{}

%\begin{flushright}
%\textbf{Julio Andrade}\\
%ICERM--Brown University\\
%Providence, RI--02903\\
%USA\\
%\vspace{0.5cm}
%j.c.andrade.math@gmail.com\\
%\vspace{0.5cm}
%04/18/2013\\ \\
%\end{flushright}
%\vspace{2.0cm}

%Dear Journal of Number Theory Editor, 
%\vspace{0.5cm}

%We would like to submit the attached paper, entitled ``Conjectures for the Integral Moments and Ratios of $L$--functions over function fields" to be considered for publication in the Journal of Number Theory. The paper deals with the famous Integral Moments and Ratios conjectures of $L$--functions and we present Conjectures for the Integral Moments and Ratios of quadratic Dirichlet $L$--functions for the rational function field. Attached below is the file containing our paper. Please, let us know of any news or update concerning the paper. Thank you!

%\begin{center}
%\vspace{2.0cm}

%Sincerely,

%\vspace{1.0cm}

%Julio Andrade.
%\end{center}

\thispagestyle{empty}
\newpage

\title[Integral Moments and Ratios of $L$--functions over function fields]{Conjectures for the Integral Moments and Ratios of $L$--functions over function fields}

\author{J. C. Andrade}
\address{School of Mathematics, University of Bristol, Bristol BS8 1TW, UK}
\email{j.c.andrade@bristol.ac.uk}
\thanks{JCA was supported by an Overseas Research Scholarship and an University of Bristol Research Scholarship.}

\author{J. P. Keating}
\address{School of Mathematics, University of Bristol, Bristol BS8 1TW, UK}
\email{j.p.keating@bristol.ac.uk}
\thanks{JPK is sponsored by the Air Force Office of Scientific Research, Air Force Material Command, USAF, under grant number FA8655-10-1-3088. The U.S. Government is authorized to reproduce and distribute reprints for Governmental purpose notwithstanding any copyright notation thereon. JPK is also grateful to the Leverhulme Trust for support.}

\subjclass[2010]{11G20 (Primary), 11M50, 14G10 (Secondary)}
\keywords{moments of quadratic Dirichlet $L$--functions, ratios of $L$--functions, finite fields, function fields, random matrix theory, hyperelliptic curve}

\begin{abstract}
We extend to the function field setting the heuristic previously developed, by Conrey, Farmer, Keating, Rubinstein and Snaith, for the integral moments and ratios of $L$--functions defined over number fields. Specifically, we give a heuristic for the moments and ratios of a family of $L$-functions associated with hyperelliptic curves of genus $g$ over a fixed finite field $\mathbb{F}_{q}$ in the limit as $g\rightarrow\infty$. Like in the number field case, there is a striking resemblance to the corresponding formulae for the characteristic polynomials of random matrices. As an application, we calculate the one--level density for the zeros of these $L$-functions. 
\end{abstract}

\maketitle

\tableofcontents

\section{Introduction}
\subsection{Moments of the Riemann zeta function}
There has been a long-standing interest in the mean values of families of $L$--functions. In the case of the Riemann zeta function, the goal is to determine of the asymptotic behaviour of
\begin{equation}
M_{k}(T)=\int_{0}^{T}|\zeta(\tfrac{1}{2}+it)|^{2k}dt,
\end{equation}
as $T\rightarrow\infty$. Hardy and Littlewood \cite{HL} established in 1918 that
\begin{equation}
M_{1}(T)\sim T\log T,
\end{equation}
and in 1926 Ingham \cite{I} showed that
\begin{equation}
M_{2}(T)\sim \frac{1}{2\pi^{2}}T\log^{4}T.
\end{equation} 
For other values of $k$ the problem is still open. It is believed that for a given $k$
\begin{equation}
\label{eq:1.4}
M_{k}(T)\sim C_{k}T(\log T)^{k^{2}},
\end{equation}
for a positive constant $C_{k}$. Conrey and Ghosh \cite{CG} presented \eqref{eq:1.4} in a more explicit form, in which 
\begin{equation}
\label{eq:1.5}
C_{k}=\frac{a_{k}g_{k}}{\Gamma(k^{2}+1)},
\end{equation} 
where
\begin{equation}
\label{eq:1.6}
a_{k}=\prod_{p}\left[\left(1-\frac{1}{p}\right)^{k^{2}}\sum_{m\geq0}\frac{d_{k}(m)^{2}}{p^{m}}\right],
\end{equation}
and $g_{k}$ should be an integer. The classical results of Hardy--Littlewood and Ingham imply that $g_{1}=1$ and $g_{2}=2$. Based on an analogy with the characteristic polynomials of random matrices, Keating and Snaith \cite{KeS1} conjectured a precise value for $C_{k}$ for $\mathfrak{R}(k)>-\tfrac{1}{2}$.
\begin{conj}[Keating--Snaith]
\label{conj:conj1}
For $k$ fixed and $\mathfrak{R}(k)>-\tfrac{1}{2}$,
\begin{equation}
M_{k}(T)=\int_{0}^{T}|\zeta(\tfrac{1}{2}+it)|^{2k}dt\sim\frac{a_{k}g_{k}}{(k^{2})!}T(\log T)^{k^{2}},
\end{equation}
as $T\rightarrow\infty$, where $a_{k}$ is the arithmetic factor given by \eqref{eq:1.6} and the random matrix theory factor $g_{k}$ is given by
\begin{equation}
g_{k}:=\lim_{N\rightarrow\infty}\frac{\Gamma(k^{2}+1)}{N^{k^{2}}}\int_{U(N)}|\Lambda_{A}(e^{0})|^{2k}dA=(k^{2})!\frac{G^{2}(1+k)}{G(1+2k)},
\end{equation}
where $\Lambda_{A}$ is the characteristic polynomial of a unitary $N\times N$ matrix $A$, $dA$ denotes the Haar measure on $U(N)$, and $G(z)$ denotes the Barnes $G$--function \cite{BA}.
\end{conj}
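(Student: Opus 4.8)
Because Conjecture~\ref{conj:conj1} is, for $k\ge 3$, beyond the reach of current technology, the plan is not an unconditional proof but a derivation of the stated constant that reduces the asymptotic to two self-contained computations—one arithmetic, one on the unitary group—followed by a check against the cases $k=1,2$ that the Hardy--Littlewood and Ingham theorems quoted above make rigorous. The three ingredients are: (i) the arithmetic factor $a_k$ of \eqref{eq:1.6}, read off from the diagonal of a Dirichlet polynomial approximation to $\zeta$; (ii) the random matrix factor $g_k$, obtained as a Barnes $G$-function ratio from an average over $U(N)$; and (iii) the fusion of the two through the correspondence between the mean density of the zeros of $\zeta$ near height $T$ and that of the eigenphases of an $N\times N$ unitary matrix.

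For the arithmetic factor I would begin from the approximate functional equation, which represents $|\zeta(\tfrac12+it)|^{2k}$ through Dirichlet polynomials $\sum_{n} d_k(n)\,n^{-1/2-it}$ of length a fixed power of $t$. Averaging over $t\in[0,T]$ and keeping only the diagonal $n=m$ leaves $\sum_{n\le x}d_k(n)^2/n$ with $x$ a power of $T$. Writing the generating series as an Euler product,
\[
\sum_{n\ge1}\frac{d_k(n)^2}{n^{s}}=\zeta(s)^{k^2}\prod_{p}\left[\left(1-\frac1{p^{s}}\right)^{k^2}\sum_{m\ge0}\frac{d_k(p^m)^2}{p^{ms}}\right],
\]
one sees that the local factors at $s=1$ are exactly those of \eqref{eq:1.6}. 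Since $\zeta(s)^{k^2}$ contributes a pole of order $k^2$ at $s=1$, a Tauberian (or contour) argument gives $\sum_{n\le x}d_k(n)^2/n\sim \frac{a_k}{(k^2)!}(\log x)^{k^2}$; this step is the source of both the power $(\log T)^{k^2}$ and the constant $a_k$, though not yet of the overall normalisation.

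The random matrix factor is the part that can be made fully rigorous. The Weyl integration formula converts $\int_{U(N)}|\Lambda_A(1)|^{2k}\,dA$ into a Selberg-type integral, which evaluates in closed form to $\prod_{j=0}^{N-1}\frac{j!\,(j+2k)!}{((j+k)!)^2}$; rewriting this via the Barnes $G$-function and applying its asymptotics yields $\int_{U(N)}|\Lambda_A(1)|^{2k}\,dA\sim \frac{G^2(1+k)}{G(1+2k)}N^{k^2}$, which is precisely the second equality defining $g_k$, with leading coefficient $g_k/(k^2)!$. To fuse the two computations I would equate mean densities by setting $N=\log(T/2\pi)$, so that $N^{k^2}$ becomes $(\log T)^{k^2}$; the substance of the conjecture is that the local correlations of the zeros of $\zeta$ are those of the CUE, so that the off-diagonal terms dropped in the arithmetic step are restored exactly by the unitary average. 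The two factors then multiply to give $M_k(T)\sim \frac{a_k g_k}{(k^2)!}T(\log T)^{k^2}$.

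The hard part is precisely this fusion: the off-diagonal contributions to the Dirichlet polynomial mean are not of lower order—they feed the same power of $\log T$—so an actual proof must control all additive correlations of $d_k$, equivalently the full family of shifted moment conjectures, which is known only for $k=1,2$ and open for $k\ge3$. As a consistency check I would verify those two cases. For $k=1$, $d_1(p^m)=1$ makes the Euler product of \eqref{eq:1.6} telescope to $a_1=1$, while $g_1=1!\,G^2(2)/G(3)=1$; there is no off-diagonal main term and the formula reduces to $M_1(T)\sim T\log T$. For $k=2$, one finds $a_2=\prod_p(1-p^{-2})=6/\pi^2$ and $g_2=4!\,G^2(3)/G(5)=24/12=2$, so that $\frac{a_2 g_2}{4!}=\frac1{2\pi^2}$ and the formula gives $M_2(T)\sim \frac1{2\pi^2}T\log^4T$, Ingham's asymptotic; here $g_2>1$ is the signature of the off-diagonal main term that the diagonal alone cannot produce and that the CUE average encodes.
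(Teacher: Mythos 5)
The statement in question is a conjecture: the paper contains no proof of it, quoting it verbatim from Keating and Snaith \cite{KeS1} as background motivation for the function-field analogues developed later, so there is no internal argument to compare yours against. Your derivation faithfully reconstructs the original Keating--Snaith heuristic that produced the conjecture — the diagonal Dirichlet-polynomial computation yielding $\sum_{n\le x}d_k(n)^2/n\sim \frac{a_k}{(k^2)!}(\log x)^{k^2}$, the exact CUE evaluation $\int_{U(N)}|\Lambda_A(1)|^{2k}\,dA=\prod_{j=0}^{N-1}\frac{j!\,(j+2k)!}{((j+k)!)^2}\sim \frac{G^2(1+k)}{G(1+2k)}N^{k^2}$, and the density matching $N=\log(T/2\pi)$ — and your consistency checks are all numerically correct: $a_1=g_1=1$ recovers Hardy--Littlewood, while $a_2=6/\pi^2$ and $g_2=24\,G^2(3)/G(5)=2$ give $\frac{a_2g_2}{4!}=\frac{1}{2\pi^2}$, Ingham's constant. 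You are also right, and appropriately explicit, that the fusion step — the off-diagonal correlations contributing at the same power of $\log T$ and being encoded exactly by the unitary average — is precisely what remains unproven for $k\ge 3$, so what you have written is a sound heuristic derivation of the conjectured constant rather than a proof, which is the most that the current state of the art permits.
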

\begin{rmk}
For $k\in\mathbb{N}$
\begin{equation}
(k^{2})!\frac{G^{2}(1+k)}{G(1+2k)}=(k^{2})!\prod_{j=0}^{k-1}\frac{j!}{(j+k)!},
\end{equation}
is an integer.
\end{rmk}

The separation into arithmetic and random matrix factors, $a_{k}$ and $g_{k}$ respectively, in \eqref{eq:1.5} is explained by a hybrid product formula for $\zeta(s)$ that includes both the primes and the zeros \cite{GHK}. %For $k\in\mathbb{N}$
%\begin{equation}
%(k^{2})!\frac{G^{2}(1+k)}{G(1+2k)}=(k^{2})!\prod_{j=0}^{k-1}\frac{j!}{(j+k)!},
%\end{equation}
%is an integer.

\subsection{Mean values of $L$--functions}

For the family of quadratic Dirichlet $L$--functions $L(s,\chi_{d})$, with $\chi_{d}$ a real primitive Dirichlet character modulo $d$ given by the Kronecker symbol $\chi_{d}(n)=\left(\tfrac{d}{n}\right)$, the goal is to determine the asymptotic behaviour of

\begin{equation}
\sum_{0<d\leq D}L(\tfrac{1}{2},\chi_{d})^{k}
\end{equation} 
as $D\rightarrow\infty$. Jutila \cite{J} proved in 1981 that

\begin{multline}\label{eq:jutilaf}
\sum_{0<d\leq D}L(\tfrac{1}{2},\chi_{d})=\frac{P(1)}{4\zeta(2)}D\left\{\log(D/\pi)+\frac{\Gamma^{'}}{\Gamma}(1/4)+4\gamma-1+4\frac{P^{'}}{P}(1)\right\}+O(D^{3/4+\varepsilon})
\end{multline}
where
\begin{equation}
P(s)=\prod_{p}\left(1-\frac{1}{(p+1)p^{s}}\right),
\end{equation}
and
\begin{equation}
\label{eq:jutilas}
\sum_{0<d\leq D}L(\tfrac{1}{2},\chi_{d})^{2}=\frac{c}{\zeta(2)}D\log^{3}D+O(D(\log D)^{5/2+\varepsilon})
\end{equation}
with
\begin{equation}
c=\frac{1}{48}\prod_{p}\left(1-\frac{4p^{2}-3p+1}{p^{4}+p^{3}}\right).
\end{equation}
Restricting $d$ to be odd, square--free and positive, so that $\chi_{8d}$ are real, primitive characters with conductor $8d$ and with $\chi_{8d}(-1)=1$, Soundararajan \cite{S} proved that
\begin{multline}
\label{eq:sound}
\frac{1}{D^{*}}\sideset{}{^*}\sum_{0<d\leq D}L(\tfrac{1}{2},\chi_{8d})^{3}\\
\sim\frac{1}{184320}\prod_{p\geq3}\left(1-\frac{12p^{5}-23p^{4}+23p^{3}-15p^{2}+6p-1}{p^{6}(p+1)}\right)(\log D)^{6},
\end{multline}
where the sum $\sum\nolimits^{*}$ runs over the restricted set, and $D^{*}$ is the number of such $d$ in $(0,D]$. For other values of $k$ the problem is still open. 

Extending their approach to the zeta--function moments, Keating and Snaith \cite{KeS2} established the following conjecture for the mean value of quadratic Dirichlet $L$--functions.

\begin{conj}[Keating--Snaith]
\label{conj:conj2}
For $k$ fixed with $\mathfrak{R}(k)\geq0$, as $D\rightarrow\infty$
\begin{equation}
\frac{1}{D^{*}}\sideset{}{^{*}}\sum_{0<d\leq D}L(\tfrac{1}{2},\chi_{8d})^{k}\sim a_{k,Sp}\frac{G(k+1)\sqrt{\Gamma(k+1)}}{\sqrt{G(2k+1)\Gamma(2k+1)}}(\log D)^{k(k+1)/2}
\end{equation}
where
$$a_{k,Sp}=2^{-k(k+2)/2}\prod_{p\geq3}\frac{(1-\frac{1}{p})^{k(k+1)/2}}{1+\frac{1}{p}}\left(\frac{(1-\frac{1}{\sqrt{p}})^{-k}+(1+\frac{1}{\sqrt{p}})^{-k}}{2}+\frac{1}{p}\right)$$
and $G(z)$ is Barnes' $G$--function.
\end{conj}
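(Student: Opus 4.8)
The plan is to derive this statement heuristically rather than prove it rigorously, following the random-matrix recipe of Keating and Snaith that produced Conjecture~\ref{conj:conj1}. The first step is to identify the symmetry type of the family $\{L(\tfrac12,\chi_{8d})\}$: because the completed $L$-functions satisfy a self-dual functional equation with sign $+1$ and the low-lying zeros are repelled from the central point, the governing random-matrix ensemble is the group $USp(2N)$ of unitary symplectic matrices with Haar measure. I would then set up the modelling dictionary by matching mean zero densities. The zeros of $L(s,\chi_{8d})$ near $s=\tfrac12$ have mean density $\tfrac{1}{2\pi}\log D$, while the eigenangles of a $USp(2N)$ matrix on the unit circle have mean density $N/\pi$, so one takes $2N\leftrightarrow\log D$, i.e. $N\approx\tfrac12\log D$. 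Under this identification the conjectured moment factorises as a universal random-matrix factor times an arithmetic factor, and the $N^{k(k+1)/2}$ growth of the matrix moment becomes the $(\log D)^{k(k+1)/2}$ of the statement.

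Next I would compute the characteristic-polynomial moment $\int_{USp(2N)}\Lambda_A(1)^{k}\,dA$ at the symmetry point. Using the Weyl integration formula for $USp(2N)$ this reduces to a Selberg-type integral over the eigenangles, which can be evaluated in closed form; extracting its leading behaviour as $N\to\infty$ gives $\int_{USp(2N)}\Lambda_A(1)^{k}\,dA\sim f_{Sp}(k)\,N^{k(k+1)/2}$, with $f_{Sp}(k)$ expressible through the Barnes $G$-function as $\tfrac{G(k+1)\sqrt{\Gamma(k+1)}}{\sqrt{G(2k+1)\Gamma(2k+1)}}$ up to an explicit power of $2$. Substituting $N=\tfrac12\log D$ produces the factor $(\log D)^{k(k+1)/2}$ together with a $2^{-k(k+1)/2}$ that must later be reconciled with the powers of $2$ appearing in the arithmetic factor.

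To pin down the arithmetic factor I would expand $L(\tfrac12,\chi_{8d})^{k}=\sum_{n}d_{k}(n)\chi_{8d}(n)n^{-1/2}$, made precise via the approximate functional equation, and average over the family. The key input is the average of $\chi_{8d}(n)$ over squarefree odd $d\le D$: by quadratic reciprocity and standard character-sum estimates this is negligible unless $n$ is a perfect square, and for $n=m^{2}$ it tends to $\prod_{p\mid m}\tfrac{p}{p+1}$. Keeping only this diagonal contribution produces an Euler product whose local factor at $p$ is governed by $\sum_{j\ge0}d_{k}(p^{2j})p^{-j}=\tfrac12\bigl[(1-p^{-1/2})^{-k}+(1+p^{-1/2})^{-k}\bigr]$, corrected by the density factor $\tfrac{1}{1+1/p}$ and the $1/p$ term coming from the weight of the $j=0$ contribution. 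Factoring out $(1-1/p)^{k(k+1)/2}$ to match the polynomial growth predicted by random matrix theory then assembles exactly the stated $a_{k,Sp}$, including the $2^{-k(k+2)/2}$ that absorbs the leftover power of $2$ from the previous step.

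The main obstacle is that this argument is only a heuristic. The approximate functional equation introduces error terms, the passage from the full character average to its diagonal (square) part discards off-diagonal contributions, and neither the interchange of summation with averaging nor the truncation of the Dirichlet series can be controlled uniformly in $k$. These are precisely the obstructions that restrict rigorous results to the first few moments, as in \eqref{eq:jutilaf}, \eqref{eq:jutilas} and \eqref{eq:sound} for $k=1,2,3$. A secondary but delicate point is the careful bookkeeping of the powers of $2$, which originate separately from the $USp(2N)$ normalisation and from the conductor $8d$, and which must cancel precisely for the random-matrix and arithmetic sides to agree.
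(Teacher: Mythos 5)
There is nothing in the paper for your argument to be compared against: Conjecture~\ref{conj:conj2} is not proved here (or anywhere), it is quoted verbatim from Keating and Snaith \cite{KeS2} as background, and it remains open --- rigorous results exist only for $k=1,2,3$, namely \eqref{eq:jutilaf}, \eqref{eq:jutilas} and \eqref{eq:sound}. Your proposal, correctly framed as a heuristic rather than a proof, is essentially the standard derivation that produced the conjecture: Katz--Sarnak symplectic symmetry, the identification $N\approx\tfrac{1}{2}\log D$, the evaluation of $\int_{USp(2N)}\Lambda_{A}(1)^{k}\,dA$ via the Weyl integration formula (whose large-$N$ limit gives the Barnes $G$-function factor), and the arithmetic factor from the diagonal $n=\square$ terms of the averaged approximate functional equation; this is also precisely the recipe that the paper itself adapts to the function field setting in Section~\ref{sec4} (cf.\ Lemma~\ref{lem:lem4.3} and the factorisation of $R_{k}$ into zeta factors times $A(\tfrac{1}{2};\cdot)$), with the leading order extracted in Theorem~\ref{conj:assintotico}. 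The one step you assert without verifying is the reconciliation of the powers of $2$, and a complete write-up would have to carry it out; it does close up: the factor $2^{-k(k+1)/2}$ arising because $\chi_{8d}$ annihilates the Euler factor at $p=2$ (so that $(1-\tfrac{1}{2})^{k(k+1)/2}$ is factored out with no compensating local factor), the factor $2^{-k(k+1)/2}$ from substituting $N=\tfrac{1}{2}\log D$, and the conversion factor $2^{k^{2}/2}$ in the identity $2^{k(k+1)/2}\prod_{j=1}^{k}\tfrac{j!}{(2j)!}=2^{k^{2}/2}\,\tfrac{G(k+1)\sqrt{\Gamma(k+1)}}{\sqrt{G(2k+1)\Gamma(2k+1)}}$ multiply together to give exactly the $2^{-k(k+2)/2}$ appearing in $a_{k,Sp}$.
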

This conjecture is also in agreement with previous results from Jutila (equations \eqref{eq:jutilaf} and \eqref{eq:jutilas}), Soundararajan \eqref{eq:sound} and with the conjectures given by Conrey and Farmer in \cite{CF}. The separation into arithmetical and random matrix factors is again explained by  a hybrid product formula \cite{BK}.

\subsection{Integral Moments of $L$--functions}

Conrey, Farmer, Keating, Rubinstein and Snaith \cite{CFKRS, CFKRS1} developed a ``recipe", making use of heuristic arguments, for a sharpened form of the Conjectures \ref{conj:conj1} and \ref{conj:conj2} for integral $k$. Specifically, they gave conjectures beyond the leading order asymptotics to include all the principal lower order terms. For example, their conjecture for quadratic Dirichlet $L$--functions (see \cite{CFKRS}) takes the following form.

\begin{conj}[Conrey, Farmer, Keating, Rubinstein, Snaith]
\label{conj:conj3}
Let $X_{d}(s)=|d|^{1/2-s}X(s,a)$ where $a=0$ if $d>0$ and $a=1$ if $d<0$, and
\begin{equation}
X(s,a)=\pi^{s-1/2}\Gamma\left(\frac{1+a-s}{2}\right)\Big/\Gamma\left(\frac{s+a}{2}\right).
\end{equation}
That is, $X_{d}(s)$ is the factor in the functional equation for the quadratic Dirichlet $L$--function
\begin{equation}
L(s,\chi_{d})=\varepsilon_{d}X_{d}(s)L(1-s,\chi_{d}).
\end{equation}
Summing over fundamental discriminants $d$ 
\begin{equation}
\sumstar_{\!d}L(\tfrac12 ,\chi_d)^k= \sumstar_{\!d}\,Q_k(\log
{|d|})(1+o(1))
\end{equation}
where $Q_k$ is the polynomial of degree $k(k+1)/2$ given by
the $k$-fold residue
\begin{eqnarray}
Q_k(x) & = & \frac{(-1)^{k(k-1)/2}2^k}{k!}
\frac{1}{(2\pi i)^{k}}
\oint \cdots \oint
\frac{G(z_1, \dots,z_{k})\Delta(z_1^2,\dots,z_{k}^2)^2}
{\prod_{j=1}^{k} z_j^{2k-1}}\nonumber\\
& & \ \ \ \times e^{\tfrac x2 \sum_{j=1}^{k}z_j}\,dz_1\dots dz_{k} ,
\end{eqnarray}
with
\begin{equation}
G(z_1,\dots,z_k)=A_k(z_1,\dots,z_k) 
\prod_{j=1}^k X(\tfrac12+z_j,a)^{-\frac12}
%\left(
%    \frac{\Gamma(\frac 34+\frac {z_j}2) 2^{z_j}}
%         {\Gamma(\frac 34- \frac {z_j}2)}
%\right)^{\tfrac 12} 
\prod_{1\le i\le j\le k}\zeta(1+z_i+z_j),
\end{equation}
$\Delta(z_{1},\ldots,z_{k})$ the Vandermonde determinant given by
\begin{equation}
\label{eq:vandermonde}
\Delta(z_{1},\ldots,z_{k})=\prod_{1\leq i<j\leq k}(z_{j}-z_{i}),
\end{equation}
and $A_k$ is the Euler product, absolutely convergent for
$|\Re z_j|<\frac12 $, defined by
\begin{eqnarray}
A_k(z_1,\dots,z_k) & = & \prod_p \prod_{1\le i \le j \le k}
\left(1-\frac{1}{p^{1+z_i+z_j}}\right) \nonumber \\&   &\times \left(\frac
12 \left(\prod_{j=1}^k\left( 1-\frac{1}{p^{\frac 12+z_j}}\right)^{-1} +
\prod_{j=1}^k\left(1+\frac{1}{p^{\frac12+z_j}}\right)^{-1}
\right)+\frac 1p \right)\nonumber\\
& & \times \left( 1+ \frac{1}{p}\right)^{-1}. 
\end{eqnarray}
\end{conj}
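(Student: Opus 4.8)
The plan is to derive this formula not as a rigorous theorem but via the CFKRS ``recipe'', a systematic heuristic that produces the full conjectural polynomial, including all lower-order terms. The natural starting point is the \emph{shifted} moment
\begin{equation*}
\sumstar_{\!d}\,\prod_{j=1}^{k} L(\tfrac12 + \alpha_j, \chi_d),
\end{equation*}
whose value at $\alpha_1=\cdots=\alpha_k=0$ is the quantity of interest. First I would insert into each factor an approximate functional equation, writing $L(\tfrac12+\alpha_j,\chi_d)$ as the sum of a Dirichlet series $\sum_n \chi_d(n)\,n^{-1/2-\alpha_j}$ and its ``dual'', the same series with $\alpha_j$ replaced by $-\alpha_j$ and multiplied by the functional-equation factor $X_d(\tfrac12+\alpha_j)$.

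Multiplying out the $k$ factors produces $2^k$ terms indexed by subsets $S\subseteq\{1,\dots,k\}$, with $S$ recording which factors are taken in dual form; the corresponding term carries $\prod_{j\in S} X_d(\tfrac12+\alpha_j)$ and has the shift $\alpha_j$ flipped to $-\alpha_j$ for $j\in S$. The decisive step is then to average over the family by summing over fundamental discriminants $d$. Here the symplectic symmetry of the family enters through the arithmetic: $\sumstar_{\!d}\chi_d(n)$ is negligible unless $n$ is a perfect square, in which case it contributes a factor proportional to the size of the family times a local correction. Retaining only these diagonal contributions and discarding the (conjecturally smaller) off-diagonal terms, the sum over square $n=\prod n_j$ factors as an Euler product, which I would split into its polar part $\prod_{1\le i\le j\le k}\zeta(1+z_i+z_j)$, where $(z_1,\dots,z_k)$ are the signed shifts of the chosen subset, and the absolutely convergent remainder $A_k(z_1,\dots,z_k)$.

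At this stage each of the $2^k$ terms carries a factor $|d|^{-\sum_{j\in S}\alpha_j}$ from the dual pieces together with a function that is manifestly symmetric under the sign flips. Writing the conductor dependence as $X_d(\tfrac12+z_j)\sim|d|^{-z_j}$ and inserting the symmetric factors $X(\tfrac12+z_j,a)^{-1/2}$ assembles $G(z_1,\dots,z_k)$ and lets me express the full sum over subsets as a single symmetric combination invariant under $z_j\mapsto -z_j$. The final step is to recognise this symmetrised sum as the $k$-fold contour integral: integrating $G$ against the kernel $\Delta(z_1^2,\dots,z_k^2)^2/\prod_j z_j^{2k-1}$ and picking up the residue at the origin reproduces the sum over the $2^k$ sign choices, with the exponential $e^{\frac{x}{2}\sum_j z_j}$, $x=\log|d|$, carrying the conductor dependence, and the Vandermonde squared supplying the zeros that cancel the spurious poles of the zeta-factors. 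Setting $\alpha_j\to0$ then collapses the integral to the degree-$k(k+1)/2$ polynomial $Q_k(\log|d|)$.

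The main obstacle is not any single computation but the combinatorial resummation in the last step: one must verify that the symmetric sum over the $2^k$ subsets, each a product of shifted zeta-values, coincides exactly with the stated residue integral. This requires checking that the poles of the individual $\zeta(1+z_i+z_j)$ are cancelled upon symmetrisation, so that the limit $\alpha_j\to0$ exists and is genuinely polynomial in $\log|d|$, and that the factor $\Delta(z_1^2,\dots,z_k^2)^2$ correctly encodes the alternating structure of the sign changes — precisely the mechanism that, on the random-matrix side, yields the symplectic characteristic-polynomial average. Justifying rigorously that the discarded off-diagonal terms are of lower order lies outside the scope of the recipe and remains the central open problem.
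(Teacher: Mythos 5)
Your proposal is correct and follows essentially the same route as the paper: it is precisely the CFKRS recipe (shifted moments, approximate functional equation, averaging the character sums to keep only the square/diagonal terms, factoring the resulting Euler product into the polar $\zeta$-factors times the convergent $A_k$, and then converting the symmetrised sum over the $2^k$ sign choices into the $k$-fold residue via the Vandermonde-kernel lemma), which is exactly the derivation the paper carries out in Section \ref{sec4} for the function-field analogue. Your indexing by subsets $S$ rather than sign vectors $\varepsilon_j=\pm 1$, and your working with $L$ plus explicit $X_d$ factors rather than the symmetrised $Z_L$, are only cosmetic differences, and you correctly identify the one genuinely non-rigorous step, namely discarding the off-diagonal contributions.
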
    

\begin{rmk}
Conjecture \ref{conj:conj3} was originally stated with error term $O(|d|^{-\tfrac{1}{2}+\varepsilon})$, but it appears there are extraneous lower order terms, as firstly pointed out by Diaconu, Goldfeld and Hoffstein \cite{DGH}, with the remainder term being larger for $k\geq3$. This is supported numerically by the computations of Alderson and Rubinstein \cite{AR}. We have therefore limited ourselves to restating it with an error that is simply $o(1)$.
\end{rmk}

Conjecture \ref{conj:conj3} is closely analogous to exact formulae for the moments of the characteristic polynomials of random matrices \cite{CFKRS, CFKRS2}. By different methods (Multiple Dirichlet Series Techniques) Diaconu, Goldfeld and Hoffstein also have obtained a conjectural formula for the moments of quadratic Dirichlet $L$--functions. 
%It remains to be verified that these conjectures coincide. 

Recently Bui and Heath--Brown \cite{BH} showed that for $q,T\geq2$
 %The recent breakthrough in the mean values of a general Dirichlet $L$--function are due to Bui and Heath--Brown \cite{BH} regarding the fourth moment and due Conrey, Iwaniec and Soundararajan \cite{CIS} about the sixth moment. Bui and Heath--Brown showed that for $q,T\geq2$

\begin{multline}
\label{BH}
\sideset{}{^*}\sum_{\chi\bmod q}\int_{0}^{T}|L(\tfrac{1}{2}+it,\chi)|^{4}dt\\
=\left(1+O\left(\frac{\omega(q)}{\log q}\sqrt{\frac{q}{\phi(q)}}\right)\right)\frac{\phi^{*}(q)T}{2\pi^{2}}\prod_{p\mid q}\frac{(1-p^{-1})^{3}}{(1+p^{-1})}(\log qT)^{4}+O(qT(\log qT)^{7/2}),
\end{multline}
where the sum is over all primitive Dirichlet character $\chi$ modulo $q$, $\omega(q)$ is the number of distinct prime factors of $q$, and $\phi^{*}(q)$ is the number of primitive Dirichlet character, and Conrey, Iwaniec and Soundararajan \cite{CIS} obtained the following asymptotic formula for the sixth moment:

\begin{multline}
\label{CIS}
\sum_{q\leq Q} \ \ \sideset{}{^*}\sum_{\chi\bmod q}\int_{-\infty}^{\infty}|\Lambda(\tfrac{1}{2}+iy,\chi)|^{6}dy \\
\sim42a_{3}\sum_{q\leq Q}\prod_{p\mid q}\frac{(1-\tfrac{1}{p})^{5}}{(1+\tfrac{4}{p}+\tfrac{1}{p^{2}})}\varphi^{*}(q)\frac{(\log q)^{9}}{9!}\int_{-\infty}^{\infty}\left|\Gamma\left(\frac{1/2+iy}{2}\right)\right|^{6}dy,
\end{multline}
where $\chi$ is a primitive even Dirichlet character modulo $q$, $a_{3}$ is a certain product over primes, $\varphi^{*}(q)$ is the number of even primitive Dirichlet characters and
\begin{equation}
\Lambda(\tfrac{1}{2}+s,\chi):=\left(\frac{q}{\pi}\right)^{s/2}\Gamma\left(\frac{1}{4}+\frac{s}{2}\right)L(\tfrac{1}{2}+s,\chi).
\end{equation}
Both \eqref{BH} and \eqref{CIS} are consistent with our general conjectural understanding of moments.

\subsection{Ratios Conjectures}
Conrey, Farmer and Zirnbauer \cite{CFZ} presented a generalization of the heuristic arguments used in \cite{CFKRS} leading to conjectures for the ratios of products of $L$--functions. These conjectures are very useful, for example it is possible to obtain from them all $n$--level correlations of zeros with lower order terms \cite{CS1} (c.f. also \cite{BKe,BKe1,BKe2}), averages of mollified $L$--functions, discrete moments of Riemann zeta function and non--vanishing results for various familes of $L$--functions. For more details about these applications see \cite{CS}.

We will quote in this paper the ratios conjecture for quadratic Dirichlet $L$--functions from \cite{CFZ}, since we will use it to compare with the results presented in the section 3.

\begin{conj}[Conrey, Farmer, Zirnbauer]
\label{conj:conj4}
Let $\mathcal{D}^{+}=\{L(s,\chi_{d}):d>0\}$ to be the symplectic family of $L$--functions associated with the quadratic character $\chi_{d}$, and suppose that the real parts of $\alpha_{k}$ and $\gamma_{q}$ are positive. Then
\begin{multline}
\sum_{0<d\le X}\frac{\prod_{k=1}^K L(1/2+\alpha_k,\chi_d)}
{\prod_{m=1}^Q L(1/2+\gamma_m,\chi_d)}=\sum_{0<d\le X} \sum_{\epsilon \in \{-1,1\}^K}
\bigg(\frac{|d|}{\pi}\bigg)^{\frac 12\sum_{k=1}^K(\epsilon_k\alpha_k-\alpha_k)}\\
\times\prod_{k=1}^K g_+\left(\frac 12+\frac{\alpha_k-\epsilon_k \alpha_k }{2}\right)
Y_SA_\mathcal D(\epsilon_1 \alpha_{1},\dots, \epsilon_K  \alpha_{K};
 \gamma )+o(X) .
\end{multline}
where
\begin{equation}
g_{+}(s)=\frac{\Gamma\left(\frac{1-s}{2}\right)}{\Gamma\left(\frac{s}{2}\right)},
\end{equation}
\begin{equation}
Y_{S}(\alpha;\gamma):=\frac{\prod_{j\leq k\leq K}\zeta(1+\alpha_{j}+\alpha_{k})\prod_{q<r\leq Q}\zeta(1+\gamma_{q}+\gamma_{r})}{\prod_{k=1}^{K}\prod_{q=1}^{Q}\zeta(1+\alpha_{k}+\gamma_{q})},
\end{equation}
and
\begin{align}
A_\mathcal D(\alpha,\gamma)
 =&\prod_p \frac{\prod_{  j\le k\le K}  (1-1/p^{1+\alpha_j+\alpha_k})
\prod_{ q< r\le Q}(1-1/p^
{1+\gamma_q+\gamma_r})}{\prod_{k=1}^K\prod_{q=1}^Q(1-1/p^{1+\alpha_k+\gamma_q})
  }\cr
& \qquad \times
\bigg(1+(1+\tfrac 1p)^{-1}
\sum_{0< \sum_k a_k +\sum_q c_q  \mbox{ is even}}
\frac{\prod_q \mu(p^{c_q})}
{  p ^{\sum_k a_k(1/2+\alpha_k) +\sum_q c_q(1/2+\gamma_q)}}\bigg) .
\end{align}
\end{conj}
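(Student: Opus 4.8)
Since the final statement is a conjecture rather than a theorem, what one can offer is not a proof but a heuristic derivation via the Conrey--Farmer--Zirnbauer ``recipe'', and this is the route I would follow. The plan is to begin by replacing each $L$-function in the numerator by the two leading pieces of its approximate functional equation: writing $L(\tfrac12+\alpha_k,\chi_d)$ as a Dirichlet series $\sum_n \chi_d(n)n^{-1/2-\alpha_k}$ plus its functional-equation dual $X_d(\tfrac12+\alpha_k)\sum_n \chi_d(n)n^{-1/2+\alpha_k}$, with $X_d$ the factor from the functional equation $L(s,\chi_d)=\varepsilon_d X_d(s)L(1-s,\chi_d)$. Each $L$-function in the denominator is expanded using $1/L(\tfrac12+\gamma_m,\chi_d)=\sum_h \mu(h)\chi_d(h)h^{-1/2-\gamma_m}$. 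Multiplying out the numerator produces $2^K$ terms indexed by $\epsilon\in\{-1,1\}^K$, according to whether each $\alpha_k$ is left alone ($\epsilon_k=+1$) or swapped to $-\alpha_k$ together with the factor $X_d(\tfrac12+\alpha_k)$ ($\epsilon_k=-1$).

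Next I would average over the family. The key arithmetic input is that, for the symplectic family over fundamental discriminants $d>0$, the average of $\chi_d(m)$ vanishes unless $m$ is a perfect square, in which case it tends to a positive density $\prod_{p\mid m}\frac{p}{p+1}$. Retaining only these diagonal (square) contributions collapses the multiple Dirichlet sum into an Euler product over primes. I would then isolate the polar part of this Euler product: the factors $(1-p^{-(1+\alpha_j+\alpha_k)})^{-1}$, $(1-p^{-(1+\gamma_q+\gamma_r)})^{-1}$ and $(1-p^{-(1+\alpha_k+\gamma_q)})$ combine to give exactly the ratio of zeta values $Y_S(\alpha;\gamma)$, while what remains is the absolutely convergent product $A_{\mathcal D}$. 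Finally, reinstating the conductor powers $(|d|/\pi)^{\frac12\sum_k(\epsilon_k\alpha_k-\alpha_k)}$ and the archimedean factors $g_+\!\left(\tfrac12+\tfrac{\alpha_k-\epsilon_k\alpha_k}{2}\right)$ produced by the dual terms $X_d$, and summing over the $2^K$ patterns $\epsilon$, reproduces the claimed formula.

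The hard part --- indeed the step that prevents this from being a genuine proof --- is the averaging in the preceding paragraph. One must assert that the off-diagonal terms (those with $m$ not a perfect square) contribute only to the error, that the approximate functional equation may be replaced by just its two main pieces, and that the resulting sums can legitimately be interchanged with the average over $d$ and extended to full Dirichlet series; none of these is justified rigorously, and it is precisely here that the error term $o(X)$ is conjectural rather than controlled. A secondary, purely technical point is the combinatorial bookkeeping needed to check that the Euler product factors as $Y_S\cdot A_{\mathcal D}$ with exactly the symmetrized arguments $\epsilon_1\alpha_1,\dots,\epsilon_K\alpha_K$; this is a matter of careful but routine manipulation of the local factors once the diagonal has been extracted.
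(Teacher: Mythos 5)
Your heuristic derivation is precisely the Conrey--Farmer--Zirnbauer recipe from which this quoted conjecture originates, and it is the same procedure the paper itself reproduces in Section 6 when deriving the function-field analogue: approximate functional equation in the numerator, M\"obius expansion of $1/L$ in the denominator, retention of only the diagonal (square) terms under the family average with density $\prod_{p\mid m}(1+1/p)^{-1}$, extraction of the polar zeta factors into $Y_{S}$ leaving the convergent product $A_{\mathcal D}$, and restoration of the conductor and gamma factors over the $2^{K}$ sign patterns $\epsilon$. Since the statement is a conjecture rather than a theorem, your correctly labelled heuristic --- including the identification of the unjustified averaging, truncation and interchange steps as the source of the merely conjectural error term $o(X)$ --- matches the paper's treatment essentially step for step.
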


\subsection{Structure of the Paper}

In this paper we develop the function field analogues of conjectures \ref{conj:conj3} and \ref{conj:conj4} for the family of quadratic Dirichlet $L$--functions associated with hyperelliptic curves of genus $g$ over a fixed finite field $\mathbb{F}_{q}$. In section 2, we present a background on $L$--functions over function fields and how to average in this context. In section 3, we present our main results: the integral moments conjecture and ratios conjectures for $L$--functions in the hyperelliptic ensemble. In section 4, we outline the adaptation of the recipe of \cite{CFKRS} for the function field setting. In section 5 we use the integral moments conjecture over function fields to compare with the main theorem established in \cite{AK} when $k=1$ and to conjecture precise values of moments for the case $k=2$ and $k=3$ in this setting. In section 6, we adapt the recipe of Conrey, Farmer and Zirnbauer \cite{CFZ} for the same family of $L$--functions over function fields and again we compare our conjecture with the original ratios conjecture for a symplectic family. In section 7 we use the the ratios conjecture to compute the one--level density of the zeros of the same family of $L$--functions.

\section{Some Basic facts about $L$--functions in function fields}

We begin by fixing a finite field $\mathbb{F}_{q}$ of odd cardinality and letting $A=\mathbb{F}_{q}[x]$ be the polynomial ring over $\mathbb{F}_{q}$ in the variable $x$. We will denote by $C$ any smooth, projective, geometrically connected curve of genus $g\geq1$ defined over the finite field $\mathbb{F}_{q}$. The zeta function of the curve $C$, first introduced by Artin \cite{A}, is defined as

\begin{equation}
Z_{C}(u):=\exp\left(\sum_{n=1}^{\infty}N_{n}(C)\frac{u^{n}}{n}\right), \ \ \ \ \ |u|<1/q
\end{equation}
where $N_{n}(C):=\mathrm{Card}(C(\mathbb{F}_{q}))$ is the number of points on $C$ with coordinates in a field extension $\mathbb{F}_{q^{n}}$ of $\mathbb{F}_{q}$ of degree $n\geq1$. Weil \cite{W} showed that the zeta function associated to $C$ is a rational function of the form

\begin{equation}\label{eq:zetaC}
Z_{C}(u)=\frac{P_{C}(u)}{(1-u)(1-qu)},
\end{equation}
where $P_{C}(u)\in\mathbb{Z}[u]$ is a polynomial of degree $2g$ with $P_{C}(0)=1$ that satisfies the functional equation

\begin{equation}\label{eq:funceq}
P_{C}(u)=(qu^{2})^{g}P_{C}\left(\frac{1}{qu}\right).
\end{equation}
By the Riemann Hypothesis for curves over finite fields, also proved by Weil \cite{W}, one knows that the zeros of $P_{C}(u)$ all lie on the circle $|u|=q^{-1/2}$, i.e.,
\begin{equation}
P_{C}(u)=\prod_{j=1}^{2g}(1-\alpha_{j}u), \ \ \ \ \ \mathrm{with} \ \ |\alpha_{j}|=\sqrt{q} \ \ \mathrm{for \ all}\ j.
\end{equation}

\subsection{Background on $\mathbb{F}_{q}[x]$}
The norm of a polynomial $f\in\mathbb{F}_{q}[x]$ is, for $f\neq0$, defined to be $|f|:=q^{\mathrm{deg}f}$ and if $f=0$, $|f|=0$. A monic irreducible polynomial is called a ``prime" polynomial.

The zeta function of $A=\mathbb{F}_{q}[x]$, denoted by $\zeta_{A}(s)$, is defined by the infinite series

\begin{equation}\label{eq:zetaA}
\zeta_{A}(s):=\sum_{\substack{f\in A \\ f \ \mathrm{monic}}}\frac{1}{|f|^{s}}=\prod_{\substack{P \ \mathrm{monic} \\ \mathrm{irreducible}}}\left(1-|P|^{-s}\right)^{-1}, \ \ \ \ \ \ \mathfrak{R}(s)>1
\end{equation}
which is
\begin{equation}\label{eq:zetaA1}
\zeta_{A}(s)=\frac{1}{1-q^{1-s}}.
\end{equation}
The analogue of the Mobius function $\mu(f)$ for $A=\mathbb{F}_{q}[x]$ is defined as follows:
\begin{equation}\label{eq:3.3}
\mu(f)=\left\{
\begin{array}{rcl}
(-1)^{t}, & f=\alpha P_{1}P_{2}\ldots P_{t},\\
0, & \mathrm{otherwise},\\
\end{array}
\right.
\end{equation}
where each $P_{j}$ is a distinct monic irreducible.

\subsection{Quadratic Characters and the Corresponding $L$--functions}
Assume from now on that $q$ is odd and let $P(x)\in\mathbb{F}_{q}[x]$ be an irreducible polynomial. 
%(see Lemma \ref{lem:lem4.3} in Section \ref{sec4} for an explanation for this restriction)
%Then \cite[Proposition 1.10]{Ro} if $f\in A$ and $P\nmid f$ we know that the congruence $x^{d}\equiv f\pmod P$ is solvable if and only if
%$$f^{\frac{|P|-1}{d}}\equiv1\pmod P,$$
%where $d$ is a divisor of $q-1$ and for our purpose $d=2$. So if $P\nmid f$, let $(f/P)$ be the unique element of $\mathbb{F}^{*}$ such that
%$$f^{\frac{|P|-1}{2}}\equiv\left(\frac{f}{P}\right)\pmod P$$  
%and if $P\mid f$ we define $(f/P)=0$.

In this way we can define the quadratic residue symbol $(f/P)\in\{\pm1\}$ for $f$ coprime to $P$ by
\begin{equation}
\left(\frac{f}{P}\right)\equiv f^{(|P|-1)/2}\pmod P.
\end{equation}
We can also define the Jacobi symbol $(f/Q)$ for arbitrary monic $Q$: let $f$ be coprime to $Q$ and $Q=\alpha P_{1}^{e_{1}}P_{2}^{e_{2}}\ldots P_{s}^{e_{s}}$, then
\begin{equation}
\left(\frac{f}{Q}\right)=\prod_{j=1}^{s}\left(\frac{f}{P_{j}}\right)^{e_{j}};
\end{equation}
if $f,Q$ are not coprime we set $(f/Q)=0$ and if $\alpha\in\mathbb{F}_{q}^{*}$ is a scalar then

\begin{equation}
\left(\frac{\alpha}{Q}\right)=\alpha^{((q-1)/2)\mathrm{deg}Q}.
\end{equation}

%The analogue of the quadratic reciprocity law for function fields is

%\begin{thm}[Quadratic reciprocity]

%Let $A,B\in\mathbb{F}_{q}[x]$ be relatively prime and $A\neq0$ and $B\neq0$. Then,

%$$\left(\frac{A}{B}\right)=\left(\frac{B}{A}\right)(-1)^{((q-1)/2)\mathrm{deg}(A)\mathrm{deg}(B)}=\left(\frac{B}{A}\right)(-1)^{((|A|-1)/2)((|B|-1)/2)}$$

%\end{thm}
Now we present the definition of quadratic characters for $\mathbb{F}_{q}[x]$.
\begin{defi}
Let $D\in\mathbb{F}_{q}[x]$ be square-free.  We define the \textit{quadratic character} $\chi_{D}$ using the quadratic residue symbol for $\mathbb{F}_{q}[x]$ by
\begin{equation}
\chi_{D}(f)=\left(\frac{D}{f}\right).
\end{equation}
So, if $P\in A$ is monic irreducible we have
\begin{equation}
\chi_{D}(P)=\left\{
\begin{array}{cl}
0, & \mathrm{if}\ P\mid D,\\
1, & \mathrm{if}\ P\not{|} D \ \mathrm{and} \ D \ \mathrm{is \ a \ square \ modulo} \ P,\\
-1, & \mathrm{if}\ P\not{|} D \ \mathrm{and} \ D \ \mathrm{is \ a \ non\ square \ modulo} \ P.\\
\end{array}
\right.
\end{equation}
\end{defi}
We define the $L$--function corresponding to the quadratic character $\chi_{D}$ by
\begin{equation}
\mathcal{L}(u,\chi_{D}):=\prod_{\substack{P \ \mathrm{monic}\\ \mathrm{irreducible}}}(1-\chi_{D}(P)u^{\mathrm{deg}P})^{-1}, \ \ \ \ \ |u|<1/q
\end{equation}
where $u=q^{-s}$. The $L$--function above can also be expressed as an infinite series in the usual way:
%and the product is over all monic irreducible (prime) polynomials $P$. 
\begin{equation}\label{eq:3.8}
\mathcal{L}(u,\chi_{D})=\sum_{\substack{f\in A \\ f \ \mathrm{monic}}}\chi_{D}(f)u^{\mathrm{deg}f}=L(s,\chi_{D})=\sum_{\substack{f\in A \\ f \ \mathrm{monic}}}\frac{\chi_{D}(f)}{|f|^{s}}.
\end{equation}
We can write \eqref{eq:3.8} as
\begin{equation}\label{eq:3.9}
\mathcal{L}(u,\chi_{D})=\sum_{n\geq0}\sum_{\substack{\mathrm{deg}(f)=n \\ f \ \mathrm{monic}}}\chi_{D}(f)u^{n}.
\end{equation}
If we denote
\begin{equation}A_{D}(n):=\sum_{\substack{f \ \mathrm{monic} \\ \mathrm{deg}(f)=n}}\chi_{D}(f),
\end{equation}
we can write \eqref{eq:3.9} as
\begin{equation}
\sum_{n\geq0}A_{D}(n)u^{n},
\end{equation}
and by \cite[Propostion 4.3]{Ro}, if $D$ is a non--square polynomial of positive degree, then $A_{D}(n)=0$ for $n\geq\mathrm{deg}(D)$.  So in this case the $L$--function is in fact a polynomial of degree at most $\mathrm{deg}(D)-1$.

Assuming the primitivity condition that $D$ is a square--free monic polynomial of positive degree and following the arguments presented in \cite{Ru} we have that $\mathcal{L}(u,\chi_{D})$ has a ``trivial" zero at $u=1$ if and only if $\mathrm{deg}(D)$ is even, which enables us to define the ``completed" $L$--function
\begin{equation}\label{eq:3.11}
\mathcal{L}(u,\chi_{D})=(1-u)^{\lambda}\mathcal{L}^{*}(u,\chi_{D}), \ \ \ \ \ \lambda=\left\{
\begin{array}{rcl}
1, & \mathrm{deg}(D) \ \mathrm{even},\\
0, & \mathrm{deg}(D) \ \mathrm{odd},\\
\end{array}
\right.
\end{equation}
where $\mathcal{L}^{*}(u,\chi_{D})$ is a polynomial of even degree
\begin{equation}
2\delta=\mathrm{deg}(D)-1-\lambda
\end{equation}
satisfying the functional equation
\begin{equation}
\mathcal{L}^{*}(u,\chi_{D})=(qu^{2})^{\delta}\mathcal{L}^{*}(1/qu,\chi_{D}).
\end{equation}

By \cite[Proposition 14.6 and 17.7]{Ro}, $\mathcal{L}^{*}(u,\chi_{D})$ is the Artin $L$--function corresponding to the unique nontrivial quadratic character of $\mathbb{F}_{q}(x)(\sqrt{D(x)})$.  The fact that is important for this paper is that the numerator $P_{C}(u)$ of the zeta-function of the hyperelliptic curve $y^{2}=D(x)$ coincides with the completed Dirichlet $L$--function $\mathcal{L}^{*}(u,\chi_{D})$ associated with the quadratic character $\chi_{D}$, as was found in Artin's thesis. So we can write $\mathcal{L}^{*}(u,\chi_{D})$ as
\begin{equation}
\mathcal{L}^{*}(u,\chi_{D})=\sum_{n=0}^{2\delta}A^{*}_{D}(n)u^{n},
\end{equation}
where $A_{D}^{*}(0)=1$ and $A_{D}^{*}(2\delta)=q^{\delta}$.

For $D$ monic, square-free, and of positive degree, the zeta function \eqref{eq:zetaC} of the hyperelliptic curve $y^{2}=D(x)$ is
\begin{equation}\label{eq:3.13}
Z_{C_{D}}(u)=\frac{\mathcal{L}^{*}(u,\chi_{D})}{(1-u)(1-qu)}.
\end{equation}
Note that,

\begin{equation}
L(s,\chi_{D})=\mathcal{L}(u,\chi_{D}), \ \ \ \ \ \ \mathrm{where} \ \ u=q^{-s}
\end{equation}
as $\mathrm{deg}(D)$ is odd. 

\subsection{The Hyperelliptic Ensemble $\mathcal{H}_{2g+1,q}$}
Let $\mathcal{H}_{d}$ be the set of square--free monic polynomials of degree $d$ in $\mathbb{F}_{q}[x]$. The cardinality of $\mathcal{H}_{d}$ is
\begin{equation}
\#\mathcal{H}_{d}=\left\{
\begin{array}{lcl}
(1-1/q)q^{d}, & d\geq2,\\
q, & d=1.\\
\end{array}
\right.
\end{equation}
(This can be proved using
\begin{equation}
\sum_{d>0}\frac{\#\mathcal{H}_{d}}{q^{ds}}=\sum_{\substack{f \ \mathrm{monic} \\ \mathrm{squarefree}}}|f|^{-s}=\frac{\zeta_{A}(s)}{\zeta_{A}(2s)}
\end{equation}
and \eqref{eq:zetaA1}  \cite[Proposition 2.3]{Ro}). In particular, for $D\in\mathcal{H}_{2g+1,q}$ and $g\geq1$ we have,
\begin{equation}\label{eq:3.17}
\#\mathcal{H}_{2g+1,q}=(q-1)q^{2g}=\frac{|D|}{\zeta_{A}(2)}.
\end{equation}
We can treat $\mathcal{H}_{2g+1,q}$ as a probability space (ensemble) with uniform probability measure. Thus the expected value of any continuous function $F$ on $\mathcal{H}_{2g+1,q}$ is defined as
\begin{equation}
\left\langle F(D)\right\rangle:=\frac{1}{\#\mathcal{H}_{2g+1,q}}\sum_{D\in\mathcal{H}_{2g+1,q}}F(D).
\end{equation}

Using the Mobius function $\mu$ of $\mathbb{F}_{q}[x]$ defined in \eqref{eq:3.3} we can sieve out the square-free polynomials, since
\begin{equation}
\sum_{A^{2}|D}\mu(A)=\left\{
\begin{array}{rcl}
1, & D \ \mathrm{square \ free},\\
0, & \mathrm{otherwise}.\\
\end{array}
\right.
\end{equation}
In this way we can write the expected value of any function $F$ as
%Thus we may write the expected value as
\begin{eqnarray}\label{eq:3.20}
\left\langle F(D)\right\rangle &=& \frac{1}{\#\mathcal{H}_{2g+1,q}}\sum_{\substack{D \ \mathrm{monic} \\ \mathrm{deg}(D)=2g+1}}\sum_{A^{2}\mid D}\mu(A)F(D)\\
& = & \frac{1}{(q-1)q^{2g}}\sum_{2\alpha+\beta=2g+1}\sum_{\substack{B \ \mathrm{monic} \\ \mathrm{deg}B=\beta}}\sum_{\substack{A \ \mathrm{monic} \\ \mathrm{deg}A=\alpha}}\mu(A)F(A^{2}B).\nonumber
\end{eqnarray}

\section{Statement of the Main Results}

We now present the main conjectures that will be motivated by extending the recipe of \cite{CFKRS} to the function field setting.

\begin{conj}
\label{thm:myconjecture}
Suppose that $q$ odd is the fixed cardinality of the finite field $\mathbb{F}_{q}$ and let $\mathcal{X}_{D}(s)=|D|^{1/2-s}X(s)$ and 
\begin{equation}
X(s)=q^{-1/2+s}.
\end{equation}
That is, $\mathcal{X}_{D}(s)$ is the factor in the functional equation
\begin{equation}
\label{eq:functionalequation} 
L(s,\chi_{D})=\mathcal{X}_{D}(s)L(1-s,\chi_{D}).
\end{equation}
Summing over fundamental discriminants $D\in\mathcal{H}_{2g+1,q}$ we have
\begin{equation}
\label{eq:eq5.1.11}
\sum_{D\in\mathcal{H}_{2g+1,q}}L(\tfrac12 ,\chi_D)^k= \sum_{D\in\mathcal{H}_{2g+1,q}}\,Q_k(\log_{q}
{|D|})(1+o(1)) 
\end{equation}
where $Q_{k}$ is the polynomial of degree $k(k+1)/2$ given by the $k$--fold residue 
\begin{eqnarray}
Q_k(x) & = & \frac{(-1)^{k(k-1)/2}2^k}{k!}
\frac{1}{(2\pi i)^{k}}
\oint \cdots \oint
\frac{G(z_1, \dots,z_{k})\Delta(z_1^2,\dots,z_{k}^2)^2}
{\prod_{j=1}^{k} z_j^{2k-1}}\nonumber\\
& &\label{eq:eq5.1.12} \ \ \ \times q^{\tfrac x2 \sum_{j=1}^{k}z_j}\,dz_1\dots dz_{k} ,
\end{eqnarray}
where $\Delta(z_{1},\ldots,z_{k})$ is defined as in \eqref{eq:vandermonde},
\begin{equation}
G(z_1,\dots,z_k)=A(\tfrac{1}{2};z_1,\dots,z_k) 
\prod_{j=1}^k X(\tfrac12+z_j)^{-\frac12}
%\left(
%    \frac{\Gamma(\frac 34+\frac {z_j}2) 2^{z_j}}
%         {\Gamma(\frac 34- \frac {z_j}2)}
%\right)^{\tfrac 12} 
\prod_{1\le i\le j\le k}\zeta_{A}(1+z_i+z_j),
\end{equation}
and $A(\tfrac{1}{2};z_1,\dots,z_k)$ is the Euler product, absolutely convergent for $|\Re z_j|<\frac12 $, defined by
\begin{eqnarray}
\label{eq:A}
A(\tfrac{1}{2};z_1,\dots,z_k) & = & \prod_{\substack{P \ \mathrm{monic} \\ \mathrm{irreducible}}} \prod_{1\le i \le j \le k}
\left(1-\frac{1}{|P|^{1+z_i+z_j}}\right) \nonumber \\&   &\times \left(\frac
12 \left(\prod_{j=1}^k\left( 1-\frac{1}{|P|^{\frac 12+z_j}}\right)^{-1} +
\prod_{j=1}^k\left(1+\frac{1}{|P|^{\frac12+z_j}}\right)^{-1}
\right)+\frac{1}{|P|} \right)\nonumber\\
& & \times \left( 1+ \frac{1}{|P|}\right)^{-1}. 
\end{eqnarray}
%More generally, we have
%\begin{eqnarray}
%& & \sum_{D\in\mathcal{H}_{2g+1,q}}L(\tfrac{1}{2}+\alpha_{1},\chi_{D})\ldots L(\tfrac{1}{2}+\alpha_{k},\chi_{D})=\nonumber\\
%& = &  \sum_{D\in\mathcal{H}_{2g+1,q}}\prod_{j=1}^{k}X(\tfrac{1}{2}+\alpha_{j})^{-\tfrac{1}{2}}|D|^{-\tfrac{1}{2}\sum_{j=1}^{k}\alpha_{j}}Q_{k}(\log_{q}|D|)(1+O(|D|^{-\frac{1}{2}+\varepsilon})) ,\nonumber\\
%& &\label{eq:eq5.1.15}
%\end{eqnarray}
%in which
%\begin{eqnarray}
%Q_k(x, \alpha)& = & \frac{(-1)^{k(k-1)/2}2^k}{k!} \frac{1}{(2\pi
%i)^{k}} \nonumber \\
%&   &\ \ \times \oint \cdots \oint 
%\frac{G(z_1,
%\dots,z_{k})\Delta(z_1^2,\dots,z_{k}^2)^2 \prod_{j=1}^{k} z_j}
%{\prod_{i =1}^{k} \prod_{j=1}^{k} (z_j -
%\alpha_i)(z_j+\alpha_{i})}\nonumber\\ 
%& & \ \ \times q^{\tfrac x2 \sum_{j=1}^{k}z_j}\,dz_1\dots dz_{k}, 
%\end{eqnarray}
%where the path of integration encloses the $\pm \alpha$'s.
\end{conj}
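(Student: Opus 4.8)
The plan is to derive \eqref{eq:eq5.1.11}--\eqref{eq:A} heuristically by adapting the CFKRS ``recipe'' of \cite{CFKRS} to the hyperelliptic ensemble, exploiting the simplifications afforded by the function field setting: the rational zeta function $\zeta_A(s)=(1-q^{1-s})^{-1}$ of \eqref{eq:zetaA1}, the elementary functional-equation factor $X(s)=q^{-1/2+s}$, and the fact that every $D\in\mathcal{H}_{2g+1,q}$ has the same conductor $|D|=q^{2g+1}$. First I would introduce shifts and study the more flexible averaged product
\begin{equation}
\sum_{D\in\mathcal{H}_{2g+1,q}}\prod_{j=1}^{k}L(\tfrac12+\alpha_j,\chi_D),
\end{equation}
recovering the unshifted moment by letting all $\alpha_j\to0$ at the end. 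Using the functional equation \eqref{eq:functionalequation}, I would write each factor through an approximate functional equation (essentially exact here, since $\mathcal{L}(u,\chi_D)$ is a polynomial) as a principal Dirichlet sum plus its dual, the dual carrying a factor $\mathcal{X}_D(\tfrac12+\alpha_j)=q^{-2g\alpha_j}$ and the shift $\alpha_j$ replaced by $-\alpha_j$. Expanding the product then yields $2^{k}$ terms indexed by a choice of sign $\epsilon_j\in\{\pm1\}$ in each factor.

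The second step is to average each of the $2^{k}$ terms over the ensemble via \eqref{eq:3.20}. For a term indexed by signs $\epsilon$, the average reduces to evaluating $\sum_D\chi_D(f_1\cdots f_k)$, and here the ``recipe'' enters: one retains the main term only from the \emph{diagonal}, namely those $f_1,\dots,f_k$ whose product is a perfect square, for which $\chi_D$ of the square averages to essentially $1$ (with a local correction at primes dividing $D$), while non-square arguments are discarded as exhibiting cancellation. Summing the diagonal over perfect-square products factors into an Euler product whose local factor at each prime $P$ is precisely the bracket appearing in \eqref{eq:A}, namely
\begin{equation}
\frac12\left(\prod_{j=1}^{k}\Bigl(1-|P|^{-\frac12-\epsilon_j\alpha_j}\Bigr)^{-1}+\prod_{j=1}^{k}\Bigl(1+|P|^{-\frac12-\epsilon_j\alpha_j}\Bigr)^{-1}\right)+\frac{1}{|P|},
\end{equation}
the two products selecting the even total degree and the $1/|P|$ term accounting for $P\mid D$. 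Factoring out the polar part $\prod_{1\le i\le j\le k}\zeta_A(1+\epsilon_i\alpha_i+\epsilon_j\alpha_j)$ then leaves exactly the absolutely convergent product $A(\tfrac12;\epsilon_1\alpha_1,\dots,\epsilon_k\alpha_k)$ of \eqref{eq:A}.

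The third step is to reassemble the $2^{k}$ averaged terms. Each carries the conductor factor $\prod_{j:\epsilon_j=-1}\mathcal{X}_D(\tfrac12+\alpha_j)=q^{-2g\sum_{\epsilon_j=-1}\alpha_j}$ together with $G(\epsilon_1\alpha_1,\dots,\epsilon_k\alpha_k)$; symmetrizing the functional-equation factors produces the $\prod_j X(\tfrac12+z_j)^{-1/2}$ appearing in $G$ and the exponential $q^{\frac x2\sum z_j}$ with $x=\log_q|D|$. Since the resulting expression is symmetric under each $\alpha_j\to-\alpha_j$ up to these explicit $X$-factors, with its poles governed by the zeta products, I would invoke the same combinatorial identity as in \cite{CFKRS,CFKRS2}, which repackages the sum over sign patterns as the single $k$-fold contour integral with the symplectic kernel $\Delta(z_1^2,\dots,z_k^2)^2/\prod_j z_j^{2k-1}$; letting $\alpha_j\to0$ and evaluating the multiple residue then produces the degree-$k(k+1)/2$ polynomial $Q_k(\log_q|D|)$ of \eqref{eq:eq5.1.12}.

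The principal obstacle is the diagonal heuristic itself: rigorously justifying that the perfect-square terms dominate, and that both the off-diagonal contributions and the tail of the approximate functional equation are genuinely negligible after normalization, lies beyond these methods, which is exactly why the statement is a conjecture rather than a theorem. On the purely algebraic side, the most delicate bookkeeping is the clean separation of the polar zeta product from the convergent Euler product $A$, and the verification that symmetrization reproduces the precise kernel and $X$-factors of \eqref{eq:eq5.1.12}; these follow the template of \cite{CFKRS} but must be checked against the explicit function field data $X(s)=q^{-1/2+s}$ and $\zeta_A(s)=(1-q^{1-s})^{-1}$.
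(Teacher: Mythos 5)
Your proposal follows essentially the same route as the paper's Section 4: introduce shifts, expand each $L$-function via the exact functional equation into $2^k$ sign-indexed terms, retain the diagonal (perfect-square) contribution through the orthogonality relation for $\chi_D$, factor the resulting Euler product into the polar $\zeta_A$ factors times the absolutely convergent product $A$, and repackage the sum over sign patterns as a $k$-fold contour integral via the CFKRS symplectic lemma before letting the shifts tend to zero. The only differences are cosmetic --- the paper symmetrizes with $Z_L(s,\chi_D)=\mathcal{X}_D(s)^{-1/2}L(s,\chi_D)$ from the outset rather than at the reassembly stage, and your ``local correction at primes dividing $D$'' should more precisely read ``at primes dividing the square argument'' (the correction arising because such primes may divide $D$) --- so the proposal matches the paper's heuristic derivation.
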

\begin{rmk}
In the case when $k=1$, this conjecture coincides with a theorem in \cite{AK}. See section 5.1.
\end{rmk}
\begin{rmk}
Note that \eqref{eq:eq5.1.11} is the function field analogue of the formula (1.5.11) in \cite{CFKRS}. 
\end{rmk}

The next conjecture is the translation for function fields of the ratios conjecture for quadratic Dirichlet $L$--functions associated with hyperelliptic curves.

\begin{conj}
\label{conj:ratiosconj}
Suppose that the real parts of $\alpha_{k}$ and $\gamma_{m}$ are positive and that $q$ odd is the fixed cardinality of the finite field $\mathbb{F}_{q}$. Then using the same notations as in the previous conjecture we have
\begin{multline}
\label{eq:6.3.20c}
\sum_{D\in\mathcal{H}_{2g+1,q}}\frac{\prod_{k=1}^{K}L(\tfrac{1}{2}+\alpha_{k},\chi_{D})}{\prod_{m=1}^{Q}L(\tfrac{1}{2}+\gamma_{m},\chi_{D})}\\
=\sum_{D\in\mathcal{H}_{2g+1,q}}\sum_{\epsilon\in\{-1,1\}^{K}}|D|^{\tfrac{1}{2}\sum_{k=1}^{K}(\epsilon_{k}\alpha_{k}-\alpha_{k})}\prod_{k=1}^{K}X\left(\frac{1}{2}+\frac{\alpha_{k}-\epsilon_{k}\alpha_{k}}{2}\right)\\
\times Y(\epsilon_{1}\alpha_{1},\ldots,\epsilon_{K}\alpha_{K};\gamma)A_{\mathcal{D}}(\epsilon_{1}\alpha_{1},\ldots,\epsilon_{K}\alpha_{K},\gamma)+o(|D|),
\end{multline}
where
\begin{multline}
A_{\mathcal{D}}(\alpha;\gamma)=\prod_{\substack{P \ \mathrm{monic} \\ \mathrm{irreducible}}}\frac{\prod_{j\leq k\leq K}\left(1-\frac{1}{|P|^{1+\alpha_{j}+\alpha_{k}}}\right)\prod_{m<r\leq Q}\left(1-\frac{1}{|P|^{1+\gamma_{m}+\gamma_{r}}}\right)}{\prod_{k=1}^{K}\prod_{m=1}^{Q}\left(1-\frac{1}{|P|^{1+\alpha_{k}+\gamma_{m}}}\right)}\\
\times\left(1+\left(1+\frac{1}{|P|}\right)^{-1}\sum_{0<\sum_{k}a_{k}+\sum_{m}c_{m} \ \mathrm{is \ even}}\frac{\prod_{m=1}^{Q}\mu(P^{c_{m}})}{|P|^{\sum_{k}a_{k}\left(\tfrac{1}{2}+\alpha_{k}\right)+\sum_{m}c_{m}\left(\tfrac{1}{2}+\gamma_{m}\right)}}\right)
\end{multline}
and
\begin{equation}
Y(\alpha;\gamma)=\frac{\prod_{j\leq k\leq K}\zeta_{A}(1+\alpha_{j}+\alpha_{k})\prod_{m<r\leq Q}\zeta_{A}(1+\gamma_{m}+\gamma_{r})}{\prod_{k=1}^{K}\prod_{m=1}^{Q}\zeta_{A}(1+\alpha_{k}+\gamma_{m})}.
\end{equation}
If we let,
\begin{equation}
H_{\mathcal{D},|D|,\alpha,\gamma}(w)=|D|^{\tfrac{1}{2}\sum_{k=1}^{K}w_{k}}\prod_{k=1}^{K}X\left(\frac{1}{2}+\frac{\alpha_{k}-w_{k}}{2}\right)Y(w_{1},\ldots,w_{K};\gamma)A_{\mathcal{D}}(w_{1},\ldots,w_{K};\gamma)\\
\end{equation}
then the conjecture may be formulated as
\begin{multline}
\label{eq:6.3.21c}
\sum_{D\in\mathcal{H}_{2g+1,q}}\frac{\prod_{k=1}^{K}L(\tfrac{1}{2}+\alpha_{k},\chi_{D})}{\prod_{m=1}^{Q}L(\tfrac{1}{2}+\gamma_{m},\chi_{D})}\\
=\sum_{D\in\mathcal{H}_{2g+1,q}}|D|^{-\tfrac{1}{2}\sum_{k=1}^{K}\alpha_{k}}\sum_{\epsilon\in\{-1,1\}^{K}}H_{\mathcal{D},|D|,\alpha,\gamma}(\epsilon_{1}\alpha_{1},\ldots,\epsilon_{K}\alpha_{K})+o(|D|).
\end{multline}
\end{conj}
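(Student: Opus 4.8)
The plan is to adapt the Conrey--Farmer--Zirnbauer recipe \cite{CFZ} to the hyperelliptic ensemble $\mathcal{H}_{2g+1,q}$, in the same spirit that Conjecture \ref{thm:myconjecture} adapts the \cite{CFKRS} recipe. First I would replace each numerator factor by the two terms of the approximate functional equation attached to \eqref{eq:functionalequation}, writing
\[
L(\tfrac12+\alpha_k,\chi_D)\ \longmapsto\ \sum_{n_k}\frac{\chi_D(n_k)}{|n_k|^{1/2+\alpha_k}}
+\mathcal{X}_D(\tfrac12+\alpha_k)\sum_{n_k}\frac{\chi_D(n_k)}{|n_k|^{1/2-\alpha_k}},
\]
the sums running over monic polynomials, and expand each denominator factor through the Dirichlet series $1/L(\tfrac12+\gamma_m,\chi_D)=\sum_{h_m}\mu(h_m)\chi_D(h_m)|h_m|^{-1/2-\gamma_m}$ furnished by the function field Mobius function \eqref{eq:3.3}. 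Multiplying out the $K$ numerator factors produces $2^K$ terms indexed by $\epsilon\in\{-1,1\}^K$: the sign $\epsilon_k=+1$ keeps the exponent $\alpha_k$, while $\epsilon_k=-1$ selects the swapped exponent $-\alpha_k$ together with the factor $\mathcal{X}_D(\tfrac12+\alpha_k)$.

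The central step is the average over the ensemble. After pulling out the $|D|$- and $\mathcal{X}_D$-factors of a given $\epsilon$, each term reduces to a multiple Dirichlet series in $n_1,\dots,n_K,h_1,\dots,h_Q$ whose coefficients are the family averages $\langle\chi_D(n_1\cdots n_K h_1\cdots h_Q)\rangle$. Here I would invoke the symplectic orthogonality relation for the hyperelliptic ensemble: as $g\to\infty$,
\[
\frac{1}{\#\mathcal{H}_{2g+1,q}}\sum_{D\in\mathcal{H}_{2g+1,q}}\chi_D(N)\ \longrightarrow\ \prod_{P\mid N}\Bigl(1+\tfrac{1}{|P|}\Bigr)^{-1}
\]
when $N$ is a perfect square and to $0$ otherwise, which follows from \eqref{eq:3.20} by counting the square-free $D$ coprime to $\sqrt{N}$. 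Retaining only these diagonal (square) contributions, and — following the recipe — extending the now-finite polynomial sums to full infinite Dirichlet series, is exactly where the argument becomes heuristic rather than rigorous.

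I would then evaluate the surviving sum over squares as an Euler product over the monic irreducibles $P$. For a fixed $P$ one sums over the valuations $a_k=v_P(n_k)$ and $c_m=v_P(h_m)$ subject to $\sum_k a_k+\sum_m c_m$ even (the local square condition), the Mobius factors $\mu(P^{c_m})$ restricting $c_m\le1$; this produces precisely the inner factor
\[
1+\Bigl(1+\tfrac{1}{|P|}\Bigr)^{-1}\sum_{0<\sum_k a_k+\sum_m c_m\ \mathrm{even}}\frac{\prod_m\mu(P^{c_m})}{|P|^{\sum_k a_k(1/2+\epsilon_k\alpha_k)+\sum_m c_m(1/2+\gamma_m)}}
\]
of $A_{\mathcal{D}}(\epsilon_1\alpha_1,\dots,\epsilon_K\alpha_K;\gamma)$. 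Extracting the polar part of this Euler product — the factors $\zeta_A(1+\alpha_j+\alpha_k)$ for $j\le k$ and $\zeta_A(1+\gamma_m+\gamma_r)$ for $m<r$ against $\zeta_A(1+\alpha_k+\gamma_m)$ in the denominator — isolates $Y$, leaving $A_{\mathcal{D}}$ as the convergent remainder. Finally the factor $\prod_{k:\epsilon_k=-1}\mathcal{X}_D(\tfrac12+\alpha_k)$ collapses, using $\mathcal{X}_D(s)=|D|^{1/2-s}X(s)$, into $|D|^{\frac12\sum_k(\epsilon_k\alpha_k-\alpha_k)}\prod_k X(\tfrac12+\tfrac{\alpha_k-\epsilon_k\alpha_k}{2})$, and summing over $\epsilon$ and over $D$ yields \eqref{eq:6.3.20c}.

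The main obstacle, and the reason the statement must remain a conjecture, is precisely the averaging step: bounding the non-square (off-diagonal) contributions and justifying the replacement of the genuinely finite polynomial $L$-values by infinite Dirichlet series. In the function field setting the completed $L$-function is a polynomial of degree $2\delta=\deg(D)-1-\lambda$, so each approximate functional equation is an exact finite identity, which makes the combinatorics cleaner than over $\mathbb{Q}$; nonetheless the recipe discards cross terms whose true size is not controlled, exactly as in \cite{CFKRS, CFZ}. A secondary technical point is checking that the Mobius-twisted local sums assemble into the displayed inner factor and that $A_{\mathcal{D}}$ converges absolutely in the region where the real parts of the $\alpha_k,\gamma_m$ are small.
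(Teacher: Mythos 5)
Your proposal is correct and follows essentially the same route as the paper: the Conrey--Farmer--Zirnbauer recipe with the exact functional equation for the numerator factors, M\"obius--expanded denominators, the symplectic orthogonality relation (Lemma \ref{lem:lem4.3}) to keep only square terms, an Euler-product evaluation whose polar part is factored out as $Y$ leaving $A_{\mathcal{D}}$, and the recombination $\mathcal{X}_{D}(\tfrac12+\alpha_k)=|D|^{-\alpha_k}X(\tfrac12+\alpha_k)$ to produce the $|D|^{\frac12\sum_k(\epsilon_k\alpha_k-\alpha_k)}\prod_k X\bigl(\tfrac12+\tfrac{\alpha_k-\epsilon_k\alpha_k}{2}\bigr)$ prefactor. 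The only cosmetic difference is that you expand $L$ directly via its functional equation, whereas the paper passes through the symmetrized $Z_{L}$ and converts back at the end; the resulting $2^{K}$ terms are identical.
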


Note that in this paper we are fixing the cardinality $q$ of the ground field $\mathbb{F}_{q}$. The asymptotic formulae we present therefore correspond to letting $g\rightarrow\infty$. This limit is different from that studied by Katz-Sarnak \cite{KS1,KS2}, and coincides with that explored in other contexts by Rudnick and Kurlberg \cite{KR}, Faifman and Rudnick \cite{FR} and Bucur \textit{et al.} in \cite{BucurIMRN}.

\section{Integral Moments of $L$--functions in the Hyperelliptic Ensemble}
\label{sec4}

In this section we will present the details of the recipe for conjecturing moments of $L$--functions associated with hyperelliptic curves of genus $g$ over a fixed finite field $\mathbb{F}_{q}$. To do this we will adapt to the function field setting the recipe presented in \cite{CFKRS}. We note that the recipe is used without rigorous justification in each of its steps, but when seen as a whole it serves to produce a conjecture for the moments of $L$--functions that is consistent with its random matrix analogues and with all results known to date.

Let $D\in\mathcal{H}_{2g+1,q}$. For a fixed $k$, we seek an asymptotic expression for 
\begin{equation}
\label{eq:moment}
\sum_{D\in\mathcal{H}_{2g+1,q}}L(\tfrac{1}{2},\chi_{D})^{k},
\end{equation} 
as $g\rightarrow\infty$. To achieve this we consider the more general expression obtained by introducing small shifts, say $\alpha_{1},\ldots,\alpha_{k}$

\begin{equation}
\sum_{D\in\mathcal{H}_{2g+1,q}}L(\tfrac{1}{2}+\alpha_{1},\chi_{D})\ldots L(\tfrac{1}{2}+\alpha_{k},\chi_{D}).
\end{equation}

By introducing such shifts, hidden structures are revealed in the form of symmetries and the calculations are simplified by the removal of higher order poles. In the end we let each $\alpha_{1},\ldots,\alpha_{k}$ tend to $0$ to recover \eqref{eq:moment}.

\subsection{Some Analogies Between Classical $L$--functions and $L$--functions over Function Fields}

The starting point to conjecture moments for $L$--functions is the use of the approximate functional equation. For the hyperelliptic ensemble considered here, the analogue of the approximate functional equation is given by

\begin{equation}
\label{eq:eq4.1}
L(s,\chi_{D})=\sum_{\substack{n \ \mathrm{monic} \\ \mathrm{deg}(n)\leq g}}\frac{\chi_{D}(n)}{|n|^{s}}+\mathcal{X}_{D}(s)\sum_{\substack{m \ \mathrm{monic} \\ \mathrm{deg}(m)\leq g-1}}\frac{\chi_{D}(m)}{|m|^{1-s}},
\end{equation}
which is an exact formula in this case rather than an approximation, where $D\in\mathcal{H}_{2g+1,q}$ and $\mathcal{X}_{D}(s)=q^{g(1-2s)}$; see \cite{AK} for more details. Note that we can write
\begin{equation}
\mathcal{X}_{D}(s)=|D|^{\tfrac{1}{2}-s}X(s),
\end{equation}
where 

\begin{equation}
X(s)=q^{-\tfrac{1}{2}+s}
\end{equation} 
corresponds to the gamma factor that appears in the classical quadratic $L$--functions. Now we will present some simple lemmas which will be used in the recipe and which make the analogy between the function field case and the number field case more direct.

\begin{lem}
\label{lem:lem4.1}
We have that,

\begin{equation} 
\mathcal{X}_{D}(s)^{1/2}=\mathcal{X}_{D}(1-s)^{-1/2},  
\end{equation}
and
\begin{equation} 
\mathcal{X}_{D}(s)\mathcal{X}_{D}(1-s)=1.  
\end{equation}
\end{lem}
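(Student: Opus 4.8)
The plan is to reduce both identities to the explicit exponential form of $\mathcal{X}_{D}$ and then verify them by elementary manipulation of exponents; because $\mathcal{X}_{D}(s)$ is simply a power of the positive real number $q$, no analytic subtleties intervene. First I would record the closed form. Combining $\mathcal{X}_{D}(s)=|D|^{1/2-s}X(s)$ with $X(s)=q^{-1/2+s}$ and the fact that $\deg D=2g+1$, so that $|D|=q^{2g+1}$, gives
\begin{equation}
\mathcal{X}_{D}(s)=q^{(2g+1)(1/2-s)}\,q^{-1/2+s}=q^{g(1-2s)},
\end{equation}
in agreement with the expression $\mathcal{X}_{D}(s)=q^{g(1-2s)}$ quoted from \cite{AK}.

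Next I would prove the product identity directly. Replacing $s$ by $1-s$ yields $\mathcal{X}_{D}(1-s)=q^{g(1-2(1-s))}=q^{g(2s-1)}$, and therefore
\begin{equation}
\mathcal{X}_{D}(s)\,\mathcal{X}_{D}(1-s)=q^{g(1-2s)}\,q^{g(2s-1)}=q^{0}=1.
\end{equation}
The square-root identity then follows by the same bookkeeping: since $\mathcal{X}_{D}(s)=q^{g(1-2s)}$ is a positive real power of $q$, its square root is unambiguously $q^{g(1-2s)/2}$, while $\mathcal{X}_{D}(1-s)^{-1/2}=q^{-g(2s-1)/2}=q^{g(1-2s)/2}$, so the two sides coincide. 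Equivalently, one obtains it at once from the product identity by rewriting it as $\mathcal{X}_{D}(s)=\mathcal{X}_{D}(1-s)^{-1}$ and raising both sides to the power $1/2$.

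The only point that in principle warrants a comment is the choice of branch for the half-power. Here, however, $\mathcal{X}_{D}$ is a real power of $q>0$ throughout, so there is no branch ambiguity and the half-powers are well defined by the positive real root. Consequently there is no genuine obstacle: both statements are immediate consequences of the explicit form of the functional-equation factor, and the lemma serves mainly to record the symmetries of $\mathcal{X}_{D}$ in the form that will be convenient when carrying out the recipe.
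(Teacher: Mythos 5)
Your proof is correct and takes the same route as the paper, which simply asserts that the lemma ``follows directly from the definition of $\mathcal{X}_{D}(s)$''; you have just written out the exponent bookkeeping explicitly via $\mathcal{X}_{D}(s)=q^{g(1-2s)}$. One tiny caveat: for complex $s$ the quantity $q^{g(1-2s)}$ is not a positive real number, so the cleanest way to phrase your branch remark is that all powers here are defined by $q^{w}:=e^{w\log q}$ with the real logarithm of $q>0$, under which convention every exponent rule you use is valid and the half-powers are unambiguous.
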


\begin{proof}
The proof is straightforward and follows directly from the definition of $\mathcal{X}_{D}(s)$.
\end{proof}

For ease of presentation, we will work with
\begin{equation}
\label{eq:Z-function}
Z_{L}(s,\chi_{D})=\mathcal{X}_{D}(s)^{-1/2}L(s,\chi_{D}),
\end{equation}
which satisfies a more symmetric functional equation as follows.

\begin{lem}
The function $Z_{L}(s,\chi_{D})$ satisfies the functional equation
\begin{equation}
Z_{L}(s,\chi_{D})=Z_{L}(1-s,\chi_{D}).
\end{equation}
\end{lem}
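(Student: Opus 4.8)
The plan is to verify the symmetric functional equation for $Z_L(s,\chi_D)$ by direct substitution, using the functional equation \eqref{eq:functionalequation} for $L(s,\chi_D)$ together with the two identities established in Lemma \ref{lem:lem4.1}. First I would write out the definition \eqref{eq:Z-function} at the argument $1-s$, obtaining
\begin{equation}
Z_{L}(1-s,\chi_{D})=\mathcal{X}_{D}(1-s)^{-1/2}L(1-s,\chi_{D}).
\end{equation}
The strategy is then to rewrite $L(1-s,\chi_D)$ in terms of $L(s,\chi_D)$. From the functional equation $L(s,\chi_{D})=\mathcal{X}_{D}(s)L(1-s,\chi_{D})$ we solve for $L(1-s,\chi_{D})=\mathcal{X}_{D}(s)^{-1}L(s,\chi_{D})$, and substitute this in.

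After substitution the right-hand side becomes $\mathcal{X}_{D}(1-s)^{-1/2}\mathcal{X}_{D}(s)^{-1}L(s,\chi_{D})$, so the whole claim reduces to checking that the prefactor equals $\mathcal{X}_{D}(s)^{-1/2}$, i.e.\ that
\begin{equation}
\mathcal{X}_{D}(1-s)^{-1/2}\mathcal{X}_{D}(s)^{-1}=\mathcal{X}_{D}(s)^{-1/2}.
\end{equation}
This is where Lemma \ref{lem:lem4.1} does all the work: the first identity there gives $\mathcal{X}_{D}(1-s)^{-1/2}=\mathcal{X}_{D}(s)^{1/2}$, and inserting this the left-hand side collapses to $\mathcal{X}_{D}(s)^{1/2}\mathcal{X}_{D}(s)^{-1}=\mathcal{X}_{D}(s)^{-1/2}$, exactly as required. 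Equivalently, one can phrase the final cancellation via the second identity $\mathcal{X}_{D}(s)\mathcal{X}_{D}(1-s)=1$; both routes amount to the same elementary bookkeeping of the exponent $-1/2+s$ in $\mathcal{X}_{D}(s)=|D|^{1/2-s}q^{-1/2+s}$.

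Since $\mathcal{X}_{D}(s)$ is an explicit monomial in $q^{s}$ (and $|D|^{s}$), there is no analytic subtlety here, no convergence issue, and no genuine obstacle: the proof is a one-line substitution followed by a cancellation of half-integer powers. If anything deserves care, it is merely ensuring that the chosen branch of the square root $\mathcal{X}_{D}(s)^{1/2}$ is handled consistently across the two identities of Lemma \ref{lem:lem4.1}, but because $\mathcal{X}_{D}$ never vanishes and is a simple exponential in $s$, the square root is well defined and the manipulations above are valid without further comment. I therefore expect the proof to consist of displaying the substitution and invoking Lemma \ref{lem:lem4.1}, with the author likely remarking only that it follows directly from the functional equation.
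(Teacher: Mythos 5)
Your proposal is correct and matches the paper's proof, which likewise disposes of the lemma as a direct application of the functional equation \eqref{eq:functionalequation} together with part (1) of Lemma \ref{lem:lem4.1}; starting the substitution from $Z_{L}(1-s,\chi_{D})$ rather than $Z_{L}(s,\chi_{D})$ is an immaterial difference. The paper states this in one line without displaying the computation, so your write-up is simply a more explicit version of the same argument.
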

\begin{proof}
This follows from a direct application of Lemma \ref{lem:lem4.1} part (1).
\end{proof}

We would like to produce an asymptotic for the $k$--shifted moment

\begin{equation}
L_{D}(s)=\sum_{D\in\mathcal{H}_{2g+1,q}}Z(s;\alpha_{1},\ldots,\alpha_{k}),
\end{equation}
where
\begin{equation}
Z(s;\alpha_{1},\ldots,\alpha_{k})=\prod_{j=1}^{k}Z_{L}(s+\alpha_{j},\chi_{D}).
\end{equation}
Making use of \eqref{eq:eq4.1} and Lemma \ref{lem:lem4.1} part(1) we have that 

\begin{equation}
\label{eq:4.12}
Z_{L}(s,\chi_{D})=\mathcal{X}_{D}(s)^{-1/2}\sum_{\substack{n \ \mathrm{monic} \\ \mathrm{deg}(n)\leq g}}\frac{\chi_{D}(n)}{|n|^{s}}+\mathcal{X}_{D}(1-s)^{-1/2}\sum_{\substack{m \ \mathrm{monic} \\ \mathrm{deg}(m)\leq g-1}}\frac{\chi_{D}(m)}{|m|^{1-s}}.
\end{equation}

\subsection{Adapting the CFKRS Recipe for the function field case}
We will follow \cite[section 4]{CFKRS} making adjustments for function fields when necessary.

(1) We start with a product of $k$ shifted $L$--functions:

\begin{equation}
Z(s;\alpha_{1}\ldots,\alpha_{k})=Z_{L}(s+\alpha_{1},\chi_{D})\ldots Z_{L}(s+\alpha_{k},\chi_{D}).
\end{equation}

(2) Replace each $L$--function by its corresponding ``approximate" functional equation \eqref{eq:4.12}. Hence we obtain, %Multiply out to get an expression of the form: 

\begin{equation}
Z(\tfrac{1}{2};\alpha_{1},\ldots,\alpha_{k})=\sum_{\varepsilon_{j}=\pm1}\prod_{j=1}^{k}\left(\mathcal{X}_{D}(\tfrac{1}{2}+\varepsilon_{j}\alpha_{j})^{-1/2}\sum_{\substack{n_{j} \ \mathrm{monic} \\ \mathrm{deg}(n_{j})\leq f(\varepsilon_{j})}}\frac{\chi_{D}(n_{j})}{|n_{j}|^{\tfrac{1}{2}+\varepsilon_{j}\alpha_{j}}}\right),
\end{equation}
where $f(1)=g$ and $f(-1)=g-1$. We then multiply out and end up with,

\begin{equation}
\label{eq:eq4.16}
Z(\tfrac{1}{2};\alpha_{1},\ldots,\alpha_{k})=\sum_{\varepsilon_{j}=\pm1}\prod_{j=1}^{k}\mathcal{X}_{D}(\tfrac{1}{2}+\varepsilon_{j}\alpha_{j})^{-1/2}\sum_{\substack{n_{1},\ldots,n_{k} \\ n_{i} \ \mathrm{monic} \\ \mathrm{deg}(n_{j})\leq f(\varepsilon_{j})}}\frac{\chi_{D}(n_{1}\ldots n_{k})}{\prod_{j=1}^{k}|n_{j}|^{\tfrac{1}{2}+\varepsilon_{j}\alpha_{j}}}.
\end{equation}

(3) Average the sign of the functional equations. 
%Replace the product of $\varepsilon_{f}$--factors by its average over the family.

Note that in this case the signs of the functional equations are all equal to $1$ and therefore do not produce any effect on the final result.

(4) Replace each summand by its expected value when averaged over the family $\mathcal{H}_{2g+1,q}$. 

In this step we need to average over all fundamental discriminants $D\in\mathcal{H}_{2g+1,q}$ and as a preliminary task, we will restate the following orthogonality relation for quadratic Dirichlet characters over function fields.

\begin{lem}
\label{lem:lem4.3}
Let
\begin{equation}
a_{m}=\prod_{\substack{P \ \mathrm{monic} \\ \mathrm{irreducible} \\ P\mid m}}\left(1+\frac{1}{|P|}\right)^{-1}.
\end{equation}
Then,
\begin{equation}
\label{eq:constraint}
\lim_{\mathrm{deg}(D)\rightarrow\infty}\frac{1}{\#\mathcal{H}_{2g+1,q}}\sum_{D\in\mathcal{H}_{2g+1,q}}\chi_{D}(m)=\begin{cases}
a_{m} \ \ \mathrm{if} & m \ \mathrm{is \ the \ square \ of \ a \ polynomial} \\
0& \mathrm{otherwise}. 
\end{cases} 
\end{equation}
(for short hand we will use the notation $m=\square$ when $m$ is the square of a polynomial).
\end{lem}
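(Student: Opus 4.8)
The plan is to package the sum into a generating function in $u=q^{-s}$ and to read off its asymptotics from the location of the poles nearest the origin. Set $S_n(m):=\sum_{D}\left(\frac{D}{m}\right)$, where $D$ runs over monic square-free polynomials of degree $n$ (so the quantity in the lemma is $S_{2g+1}(m)$, recalling $\chi_D(m)=\left(\frac{D}{m}\right)$). Since the Jacobi symbol $\left(\frac{\cdot}{m}\right)$ is multiplicative in its numerator and a square-free $D$ is a product of distinct primes, I would first factor the full generating function as an Euler product and then use $1+x=(1-x^2)/(1-x)$ on each local factor:
\begin{equation}
\sum_{n\ge 0}S_n(m)u^n=\prod_{\substack{P\ \mathrm{monic}\\ \mathrm{irreducible}}}\left(1+\psi_m(P)u^{\mathrm{deg}P}\right)=\frac{L(u,\psi_m)}{L(u^2,\psi_m^2)}=:G_m(u),
\end{equation}
where $\psi_m(\cdot):=\left(\frac{\cdot}{m}\right)$ is the quadratic character modulo $m$ and $L(u,\psi):=\prod_P(1-\psi(P)u^{\mathrm{deg}P})^{-1}$. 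As $\psi_m^2$ is the principal character modulo $m$, the evaluation \eqref{eq:zetaA1} of $\zeta_A$ gives the explicit closed form $L(u^2,\psi_m^2)=(1-qu^2)^{-1}\prod_{P\mid m}(1-u^{2\,\mathrm{deg}P})$, so that $G_m(u)=L(u,\psi_m)\,(1-qu^2)\prod_{P\mid m}(1-u^{2\,\mathrm{deg}P})^{-1}$.

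\textbf{The dichotomy.} The decisive structural point is that $\psi_m$ is the principal character \emph{exactly} when $m$ is a perfect square: indeed $\psi_m=\prod_{P^{e}\,\|\,m}\left(\frac{\cdot}{P}\right)^{e}$, and since each $\left(\frac{\cdot}{P}\right)$ is a nontrivial quadratic character, $\psi_m$ is principal if and only if every exponent $e$ is even. When $m=\square$, one has $L(u,\psi_m)=(1-qu)^{-1}\prod_{P\mid m}(1-u^{\mathrm{deg}P})$, and a short simplification collapses the generating function to
\begin{equation}
G_m(u)=\frac{1-qu^2}{1-qu}\prod_{P\mid m}\frac{1}{1+u^{\mathrm{deg}P}},
\end{equation}
whose only pole in $|u|\le 1/q$ is a simple pole at $u=1/q$. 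Extracting its residue gives $S_n(m)\sim(1-1/q)q^{n}\prod_{P\mid m}(1+|P|^{-1})^{-1}=a_m\,\#\mathcal{H}_{2g+1,q}$, so the normalized average tends to $a_m$.

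\textbf{The non-square case and the main obstacle.} When $m\neq\square$, the character $\psi_m$ is nonprincipal, and here lies the crux of the argument, since genuine cancellation must be produced. The key input is that the $L$-function of a nonprincipal character is a \emph{polynomial} in $u$: equivalently, the complete character sum $\sum_{\mathrm{deg}f=n}\psi_m(f)$ vanishes for $n\ge\mathrm{deg}m$, because monic polynomials of degree $n\ge\mathrm{deg}m$ equidistribute among residues modulo $m$ (no appeal to Weil's bound is needed, only this vanishing). Hence $L(u,\psi_m)$ is entire, and the only singularities of $G_m(u)=L(u,\psi_m)\,(1-qu^2)\prod_{P\mid m}(1-u^{2\,\mathrm{deg}P})^{-1}$ arise from the finitely many zeros of $\prod_{P\mid m}(1-u^{2\,\mathrm{deg}P})$, all lying on $|u|=1$; in particular there is no pole at $u=1/q$. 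A partial-fraction expansion then yields $S_n(m)=O(n^{C})$ for some constant $C$ depending only on $m$, so $S_n(m)/\#\mathcal{H}_{2g+1,q}=O(n^{C}/q^{n})\to 0$. The main obstacle is precisely this step: one must combine the nonprincipality of $\psi_m$ for non-square $m$ with the polynomiality of its $L$-function in order to push all the mass of $G_m$ out to $|u|=1$ and thereby defeat the trivial size $q^{n}$ of the numerator. Letting $n=2g+1\to\infty$ in the two cases establishes the claim.
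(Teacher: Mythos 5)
Your proposal is correct, and it proves the lemma by a genuinely different route than the paper. The paper splits into the same two cases but handles each by citation: for $m=\square$ it invokes Proposition 2 of \cite{AK}, which counts square-free monic polynomials of degree $2g+1$ coprime to a fixed $l$, and for $m\neq\square$ it invokes the function-field P\'olya--Vinogradov inequality of \cite[Lemma 2.1]{FR}, whose proof rests on the Weil bound (the Riemann Hypothesis for curves) and gives the $m$-uniform estimate $\ll 2^{\mathrm{deg}(m)}\sqrt{|D|}$. You instead encode the whole sum in the single rational generating function $L(u,\psi_m)/L(u^2,\psi_m^2)$, identify the square/non-square dichotomy with the principality or not of the character $\psi_m=\left(\tfrac{\cdot}{m}\right)$, and read off both cases from pole locations: a simple pole at $u=1/q$ with residue producing exactly $a_m$ when $m=\square$, versus all poles on $|u|=1$ (hence coefficients $O(n^C)$) when $m\neq\square$, using only the elementary fact that a nonprincipal character sums to zero over all monic polynomials of degree $\geq\mathrm{deg}(m)$ --- no Weil bound anywhere. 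What your approach buys is a self-contained, RH-free proof that also explains structurally why squares are the distinguished case; what the paper's approach buys is an explicit error term whose dependence on $m$ is controlled ($2^{\mathrm{deg}(m)}q^{-g}$ after normalizing), which is what one implicitly wants later in the recipe, where the lemma is applied to $m=n_{1}\cdots n_{k}$ with degree growing like $kg$; your constants $C$ and the implied constant in $O(n^{C})$ depend on $m$ in an unspecified way, which is harmless for the lemma as stated (fixed $m$, $g\rightarrow\infty$) but would need to be tracked for that later use.
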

\begin{proof}

We start by considering $m=\square=l^{2}$, then using Proposition 2 from \cite{AK} and the fact that $\tfrac{\Phi(l)}{|l|}\leq1$ we have, 
\begin{equation}
\frac{1}{\#\mathcal{H}_{2g+1,q}}\sum_{D\in\mathcal{H}_{2g+1,q}}\chi_{D}(m=l^{2})=\frac{1}{\#\mathcal{H}_{2g+1,q}}\frac{|D|}{\zeta_{A}(2)}\prod_{\substack{P \ \mathrm{monic} \\ \mathrm{irreducible} \\ P\mid m}}\left(1+\frac{1}{|P|}\right)^{-1}+O\left(\frac{\sqrt{|D|}}{\#\mathcal{H}_{2g+1,q}}\right).
\end{equation}
By making use of equation \eqref{eq:3.17} we obtain

\begin{equation}
\frac{1}{\#\mathcal{H}_{2g+1,q}}\sum_{D\in\mathcal{H}_{2g+1,q}}\chi_{D}(m=l^{2})=\prod_{\substack{P \ \mathrm{monic} \\ \mathrm{irreducible} \\ P\mid m}}\left(1+\frac{1}{|P|}\right)^{-1} +O(q^{-g}).
\end{equation}
%$$\sum_{D\in\mathcal{H}_{2g+1,q}}\chi_{D}(m)=\frac{|D|}{\zeta_{A}(2)}\prod_{\substack{P \ \mathrm{monic} \\ \mathrm{irreducible} \\ P\mid m}}\left(1+\frac{1}{|P|}\right)^{-1}+O(\sqrt{|D|}).$$
%So,
%\begin{eqnarray}
%\frac{1}{\#\mathcal{H}_{2g+1,q}}\sum_{D\in\mathcal{H}_{2g+1,q}}\chi_{D}(m=l^{2}) & = & \frac{|D|}{\zeta_{A}(2)}\frac{1}{(q-1)q^{2g}}\prod_{\substack{P \ \mathrm{monic} \\ \mathrm{irreducible} \\ P\mid m}}\left(1+\frac{1}{|P|}\right)^{-1}\nonumber\\
%& + & O(\sqrt{|D|}((q-1)q^{2g})^{-1})\nonumber\\
%& = & \prod_{\substack{P \ \mathrm{monic} \\ \mathrm{irreducible} \\ P\mid m}}\left(1+\frac{1}{|P|}\right)^{-1} +O(q^{-g}).\nonumber  
%\end{eqnarray}
Therefore,
\begin{equation}
\lim_{\deg(D)\rightarrow\infty}\frac{1}{\#\mathcal{H}_{2g+1,q}}\sum_{D\in\mathcal{H}_{2g+1,q}}\chi_{D}(m=l^{2})=\prod_{\substack{P \ \mathrm{monic} \\ \mathrm{irreducible} \\ P\mid m}}\left(1+\frac{1}{|P|}\right)^{-1}.
\end{equation}

If $m\neq\square$ we can use the function field version of the Polya--Vinogradov inequality \cite[Lemma 2.1]{FR} to bound the sum over non--trivial Dirichlet characters,  
\begin{equation}
\Bigg|\sum_{\substack{D\in\mathcal{H}_{2g+1,q} \\ m\neq\square}}\chi_{D}(m)\Bigg|\ll 2^{\mathrm{deg}(m)}\sqrt{|D|},
\end{equation}
and so we end up with,
\begin{eqnarray}
\frac{1}{\#\mathcal{H}_{2g+1,q}}\sum_{\substack{D\in\mathcal{H}_{2g+1,q} \\ m\neq\square}}\chi_{D}(m) & \ll & \frac{2^{g}\sqrt{|D|}}{(q-1)q^{2g}}\nonumber\\
& \ll & q^{-g}2^{g},
\end{eqnarray}
which tends to zero when $g\rightarrow\infty$ since $q>1$ is a fixed odd number. %Note that here we have used the hypothesis that $q>3$ because for $q=3$ we have 
%\begin{equation}
%\frac{1}{\#\mathcal{H}_{2g+1,3}}\sum_{\substack{D\in\mathcal{H}_{2g+1,3} \\ m\neq\square}}\chi_{D}(m)\ll 3^{-g}2^{g},
%\end{equation}
%which tends to infinity as $g\rightarrow\infty$.

%the error term above gets bigger than the main term. 
%and the asymptotic formula it is not true in this case. So the constraint $q>3$ warrant us a good error term in the asymptotic formula given by \eqref{eq:constraint}.
\end{proof}

Using Lemma \ref{lem:lem4.3}, we can average the summand in \eqref{eq:eq4.16}, since
\begin{equation}
\lim_{g\rightarrow\infty}\langle  \chi_D(n_{1})\ldots\chi_{D}(n_{k}) \rangle
=
\begin{cases}
\prod_{P|\square} \left(1+\frac{1}{|P|}\right)^{-1} & \ \ \mathrm{if} \ \ n_{1}\ldots n_{k} =\square, \\
0& \text{otherwise}. 
\end{cases} 
\end{equation}
We therefore write (heuristically as the sums below can diverge depending on the choice of $\varepsilon_{j}\alpha_{j}$'s)

\begin{eqnarray}
\lim_{g\rightarrow\infty}\frac{1}{\#\mathcal{H}_{2g+1,q}}\sum_{D\in\mathcal{H}_{2g+1,q}}\sum_{\substack{n_{1},\ldots,n_{k} \\ n_{i} \ \mathrm{monic}}}\frac{\chi_{D}(n_{1}\ldots n_{k})}{\prod_{j=1}^{k}|n_{j}|^{\tfrac{1}{2}+\varepsilon_{j}\alpha_{j}}} & = & \sum_{\substack{n_{1},\ldots,n_{k} \\ n_{i} \ \mathrm{monic} \\ n_{1}\ldots n_{k}=m^{2}}}\frac{a_{m^{2}}}{\prod_{j=1}^{k}|n_{j}|^{\tfrac{1}{2}+\varepsilon_{j}\alpha_{j}}}\nonumber\\
& = & \sum_{m \ \mathrm{monic}}\sum_{\substack{n_{1},\ldots,n_{k} \\ n_{i} \ \mathrm{monic} \\ n_{1}\ldots n_{k}=m^{2}}}\frac{a_{m^{2}}}{\prod_{j=1}^{k}|n_{j}|^{\tfrac{1}{2}+\varepsilon_{j}\alpha_{j}}}.
\end{eqnarray}

(5) Extend, in \eqref{eq:eq4.16}, each of $n_{1},\ldots,n_{k}$ for all monic polynomials and denote the result $M(s;\alpha_{1},\ldots,\alpha_{k})$ to produce the desired conjecture.

If we call

\begin{equation}
R_{k}\left(\frac{1}{2};\varepsilon_{1}\alpha_{1},\ldots,\varepsilon_{k}\alpha_{k}\right)=\sum_{m \ \mathrm{monic}}\sum_{\substack{n_{1},\ldots,n_{k} \\ n_{i} \ \mathrm{monic} \\ n_{1}\ldots n_{k}=m^{2}}}\frac{a_{m^{2}}}{\prod_{j=1}^{k}|n_{j}|^{\tfrac{1}{2}+\varepsilon_{j}\alpha_{j}}},
\end{equation}
the recipe thus predicts

\begin{equation}
\label{thm:genconj1}
\sum_{D\in\mathcal{H}_{2g+1,q}}Z(\tfrac{1}{2},\alpha_{1},\ldots,\alpha_{k})=\sum_{D\in\mathcal{H}_{2g+1,q}}M(\tfrac{1}{2},\alpha_{1},\ldots,\alpha_{k})(1+o(1)),
\end{equation}
where

\begin{equation}
M\left(\frac{1}{2};\alpha_{1}\ldots\alpha_{k}\right)=\sum_{\varepsilon_{j}=\pm1}\prod_{j=1}^{k}\mathcal{X}_{D}(\tfrac{1}{2}+\varepsilon_{j}\alpha_{j})^{-1/2}R_{k}\left(\frac{1}{2};\varepsilon_{1}\alpha_{1},\ldots,\varepsilon_{k}\alpha_{k}\right).
\end{equation}

%we have that the quantity produced by the recipe is

%\begin{equation}
%M\left(\frac{1}{2};\alpha_{1}\ldots\alpha_{k}\right)=\sum_{\varepsilon_{j}=\pm1}\prod_{j=1}^{k}\mathcal{X}_{D}(\tfrac{1}{2}+\varepsilon_{j}\alpha_{j})^{-1/2}R_{k}\left(\frac{1}{2};\varepsilon_{1}\alpha_{1},\ldots,\varepsilon_{k}\alpha_{k}\right).
%\end{equation}

%(6) The conjecture is

%\begin{equation}
%\label{thm:genconj1}
%\sum_{D\in\mathcal{H}_{2g+1,q}}Z(\tfrac{1}{2},\alpha_{1},\ldots,\alpha_{k})=\sum_{D\in\mathcal{H}_{2g+1,q}}M(\tfrac{1}{2},\alpha_{1},\ldots,\alpha_{k})(1+o(1)).
%\end{equation}

\subsection{Putting the Conjecture in a More Useful Form}

The conjecture \eqref{thm:genconj1} is problematic in the form presented because the individual terms have poles that cancel when summed. In this section we put it in a more useful form, writing $R_{k}$ as an Euler product and then factoring out the appropriate $\zeta_{A}(s)$--factors.

We have that $a_{m}$ is multiplicative, since
\begin{equation}
a_{mn}=a_{m}a_{n} \ \ \ \ \text{whenever $\mathrm{gcd}(m,n)=1$},
\end{equation}
where
\begin{equation}
a_{m}=\prod_{\substack{P \ \mathrm{monic} \\ \mathrm{irreducible} \\ P\mid m}}(1+|P|^{-1})^{-1},
\end{equation}
and if we define
\begin{equation}
\psi(x):=\sum_{\substack{n_{1}\ldots n_{k}=x \\ n_{i} \ \mathrm{monic}}}\frac{1}{|n_{1}|^{s+\alpha_{1}}\ldots|n_{k}|^{s+\alpha_{k}}},
\end{equation}
we have that $\psi(m^{2})$ is multiplicative on $m$.

So,
\begin{eqnarray}
& & \sum_{m \ \mathrm{monic}}\sum_{\substack{n_{1},\ldots,n_{k} \\ n_{i} \ \mathrm{monic} \\ n_{1}\ldots n_{k}=m^{2}}}\frac{a_{m^{2}}}{|n_{1}|^{s+\alpha_{1}}\ldots|n_{k}|^{s+\alpha_{k}}}=\sum_{m \ \mathrm{monic}}a_{m^{2}}\sum_{\substack{n_{1},\ldots,n_{k} \\ n_{i} \ \mathrm{monic} \\ n_{1}\ldots n_{k}=m^{2}}}\frac{1}{|n_{1}|^{s+\alpha_{1}}\ldots|n_{k}|^{s+\alpha_{k}}}\\
& = & \sum_{m \ \mathrm{monic}}a_{m^{2}}\psi(m^{2})=\prod_{\substack{P \ \mathrm{monic} \\ \mathrm{irreducible}}}\left(1+\sum_{j=1}^{\infty}a_{P^{2j}}\psi(P^{2j})\right),\nonumber
\end{eqnarray}
where
\begin{equation}
\psi(P^{2j})=\sum_{\substack{n_{1},\ldots,n_{k} \\ n_{i} \ \mathrm{monic} \\ n_{1}\ldots n_{k}=P^{2j}}}\frac{1}{|n_{1}|^{s+\alpha_{1}}\ldots|n_{k}|^{s+\alpha_{k}}},
\end{equation}
and so, $n_{i}=P^{e_{i}}$, for $i=1,\ldots,k$ and $e_{1}+\cdots+e_{k}=2j$. %due the fact that $|P|^{e_{1}}\ldots|P|^{e_{k}}=|P|^{e_{1}+\cdots+e_{k}}=|P|^{2j}$. 

Hence we can write

\begin{equation}
\psi(P^{2j})=\sum_{\substack{e_{1},\ldots,e_{k}\geq0 \\ e_{1}+\cdots+e_{k}=2j}}\prod_{i=1}^{k}\frac{1}{|P|^{e_{i}(s+\alpha_{i})}}
\end{equation}
%\begin{eqnarray}
%\psi(P^{2j}) & = & \sum_{\substack{e_{1},\ldots,e_{k}\geq0 \\ e_{1}+\cdots+e_{k}=2j}}\frac{1}{|P|^{e_{1}(s+\alpha_{1})}\ldots|P|^{e_{k}(s+\alpha_{k})}}\nonumber\\
%& = & \sum_{\substack{e_{1},\ldots,e_{k}\geq0 \\ e_{1}+\cdots+e_{k}=2j}}\prod_{i=1}^{k}\frac{1}{|P|^{e_{i}(s+\alpha_{i})}}\nonumber
%\end{eqnarray}
and thus we end up with 
\begin{multline}
\label{eq:eq5.3.1}
R_{k}(s;\alpha_{1},\ldots,\alpha_{k})=\prod_{\substack{P \ \mathrm{monic} \\ \mathrm{irreducible}}}\left(1+\sum_{j=1}^{\infty}a_{P^{2j}}\psi(P^{2j})\right)\\
=\prod_{\substack{P \ \mathrm{monic} \\ \mathrm{irreducible}}}\left(1+\sum_{j=1}^{\infty}a_{P^{2j}}\sum_{\substack{e_{1},\ldots,e_{k}\geq0 \\ e_{1}+\cdots+e_{k}=2j}}\prod_{i=1}^{k}\frac{1}{|P|^{e_{i}(s+\alpha_{i})}}\right).\ \ \ \ \ \ \ \ \ \ \ \ \ \ \ \ \ 
\end{multline}
But
\begin{equation}
a_{P^{2j}}=(1+|P|^{-1})^{-1},
\end{equation}
so that \eqref{eq:eq5.3.1} becomes
\begin{eqnarray}
R_{k}(s;\alpha_{1},\ldots,\alpha_{k})&=& \prod_{\substack{P \ \mathrm{monic} \\ \mathrm{irreducible}}}\left(1+(1+|P|^{-1})^{-1}\sum_{j=1}^{\infty}\sum_{\substack{e_{1},\ldots,e_{k}\geq0 \\ e_{1}+\cdots+e_{k}=2j}}\prod_{i=1}^{k}\frac{1}{|P|^{e_{i}(s+\alpha_{i})}}\right)\nonumber\\
& = & \prod_{\substack{P \ \mathrm{monic} \\ \mathrm{irreducible}}}R_{k,P}.
\end{eqnarray}

Using 

\begin{equation}
(1+|P|^{-1})^{-1}=\sum_{l=0}^{\infty}\frac{(-1)^{l}}{|P|^{l}}
\end{equation}
%$$(1+|P|^{-1})^{-1}=1-\frac{1}{|P|}+\frac{1}{|P|^{2}}-\frac{1}{|P|^{3}}+\cdots=\sum_{l=0}^{\infty}\frac{(-1)^{l}}{|P|^{l}}$$
we have that
\begin{equation}
R_{k,P}=1+\sum_{l=0}^{\infty}\sum_{j=1}^{\infty}\sum_{\substack{e_{1},\ldots,e_{k}\geq0 \\ e_{1}+\cdots+e_{k}=2j}}\prod_{i=1}^{k}\frac{(-1)^{l}}{|P|^{e_{i}(s+\alpha_{i})+l}}
\end{equation}
and so
\begin{equation}
R_{k}(s;\alpha_{1},\ldots,\alpha_{k})=\prod_{\substack{P \ \mathrm{monic} \\ \mathrm{irreducible}}}\left(1+\sum_{l=0}^{\infty}\sum_{j=1}^{\infty}\sum_{\substack{e_{1},\ldots,e_{k}\geq0 \\ e_{1}+\cdots+e_{k}=2j}}\prod_{i=1}^{k}\frac{(-1)^{l}}{|P|^{e_{i}(s+\alpha_{i})+l}}\right).
\end{equation}

The key point is that when $\alpha_{i}=0$ and $s=1/2$ only terms with $e_{1}+\cdots+e_{k}=2$ give rise to poles. Isolating the term with $l=0$ and $j=1$: %Thus, we look for $l=0$ and $j=1$ and have 

\begin{eqnarray}
R_{k,P} & = & 1+\sum_{e_{1}+\cdots+e_{k}=2}\prod_{i=1}^{k}\frac{1}{|P|^{e_{i}(s+\alpha_{i})}}+ \ (\mathrm{lower \ order \ terms})\nonumber\\
%& = & 1+\sum_{e_{1}+\cdots+e_{k}=2}\left(\frac{1}{|P|^{e_{1}(s+\alpha_{1})}}\cdots\frac{1}{|P|^{e_{k}(s+\alpha_{k})}}\right)+ \ (\mathrm{lower \ order \ terms})\nonumber\\
%& = & 1+\frac{1}{|P|^{(s+\alpha_{1})}|P|^{s+\alpha_{2}}}+\frac{1}{|P|^{s+\alpha_{1}}|P|^{s+\alpha_{3}}}+\cdots+ \ (\mathrm{lower \ order \ terms})\nonumber\\
& = & 1+\sum_{1\leq i\leq j\leq k}\frac{1}{|P|^{2s+\alpha_{i}+\alpha_{j}}}+ \ (\mathrm{lower \ order \ terms})
\end{eqnarray}
Hence we can write, for $\mathfrak{R}(\alpha_{i})$ sufficiently small,
\begin{equation}
R_{k,P}=1+\sum_{1\leq i\leq j\leq k}\frac{1}{|P|^{2s+\alpha_{i}+\alpha_{j}}}+O(|P|^{-1-2s+\varepsilon})+O(|P|^{-3s+\varepsilon})
\end{equation}
(for more details see \cite[pg87]{CFKRS}). Expressing $R_{k,P}$ as a product, we finish with
\begin{equation}
R_{k,P}=\prod_{1\leq i\leq j\leq k}\left(1+\frac{1}{|P|^{2s+\alpha_{i}+\alpha_{j}}}\right)\times(1+O(|P|^{-1-2s+\varepsilon})+O(|P|^{-3s+\varepsilon})).
\end{equation}

Now, since
\begin{equation}\prod_{\substack{P \ \mathrm{monic} \\ \mathrm{irreducible}}}\left(1+\frac{1}{|P|^{2s}}\right)=\frac{\zeta_{A}(2s)}{\zeta_{A}(4s)}
\end{equation}
has a simple pole at $s=\frac{1}{2}$ and
\begin{equation}
\prod_{\substack{P \ \mathrm{monic} \\ \mathrm{irreducible}}}(1+O(|P|^{-1-2s+\varepsilon})+O(|P|^{-3s+\varepsilon}))
\end{equation}
is analytic in $\mathfrak{R}(s)>\frac{1}{3}$, we see that $\prod_{P}R_{k,P}$ has a pole at $s=\frac{1}{2}$ of order $k(k+1)/2$ if $\alpha_{1}=\cdots=\alpha_{k}=0$.

With the divergent sums replaced by their analytic continuation and the leading order poles clearly identified, we are almost ready to put conjecture \ref{thm:genconj1} in a more desirable form. We just need to factor out the appropriate zeta--factors and write the above product $\prod_{P}R_{k,P}$ as

\begin{equation}
R_{k}(s;\alpha_{1},\ldots,\alpha_{k})=\prod_{1\leq i\leq j\leq k}\zeta_{A}(2s+\alpha_{i}+\alpha_{j})A(s;\alpha_{1},\ldots,\alpha_{k}),
\end{equation}
where
\begin{equation}
A(s;\alpha_{1},\ldots,\alpha_{k})=\prod_{\substack{P \ \mathrm{monic} \\ \mathrm{irreducible}}}\left(R_{k,P}(s;\alpha_{1},\ldots,\alpha_{k})\prod_{1\leq i\leq j\leq k}\left(1-\frac{1}{|P|^{2s+\alpha_{i}+\alpha_{j}}}\right)\right).
\end{equation}
%\begin{eqnarray}
%& & R_{k}(s;\alpha_{1},\ldots,\alpha_{k})\nonumber\\
%& = & \prod_{\substack{P \ \mathrm{monic} \\ \mathrm{irreducible}}}\left(\prod_{1\leq i\leq j\leq k}\left(1+\frac{1}{|P|^{2s+\alpha_{i}+\alpha_{j}}}\right)\times(1+O(|P|^{-1-2s+\varepsilon})+O(|P|^{-3s+\varepsilon}))\right)\nonumber\\
%& = & \prod_{1\leq i\leq j\leq k}\frac{\zeta_{A}(2s+\alpha_{i}+\alpha_{j})}{\zeta_{A}(2(2s+\alpha_{i}+\alpha_{j}))}\prod_{\substack{P \ \mathrm{monic} \\ \mathrm{irreducible}}}(1+O(|P|^{-1-2s+\varepsilon})+O(|P|^{-3s+\varepsilon}))\nonumber\\
%& = & \prod_{1\leq i\leq j\leq k}\zeta_{A}(2s+\alpha_{i}+\alpha_{j})\prod_{\substack{P \ \mathrm{monic} \\ \mathrm{irreducible}}}\prod_{1\leq i\leq j\leq k}\left(1-\frac{1}{|P|^{2s+\alpha_{i}+\alpha_{j}}}\right)\left(1+\frac{1}{|P|^{2s+\alpha_{i}+\alpha_{j}}}\right)\nonumber\\
%& & \times(1+O(|P|^{-1-2s+\varepsilon})+O(|P|^{-3s+\varepsilon}))\nonumber\\
%& = & \prod_{1\leq i\leq j\leq k}\zeta_{A}(2s+\alpha_{i}+\alpha_{j})\prod_{\substack{P \ \mathrm{monic} \\ \mathrm{irreducible}}}\left(R_{k,P}(s;\alpha_{1},\ldots,\alpha_{k})\prod_{1\leq i\leq j\leq k}\left(1-\frac{1}{|P|^{2s+\alpha_{i}+\alpha_{j}}}\right)\right)\nonumber\\
%& = &\prod_{1\leq i\leq j\leq k}\zeta_{A}(2s+\alpha_{i}+\alpha_{j})A(s;\alpha_{1},\ldots,\alpha_{k}).\nonumber
%\end{eqnarray}
Here, $A(s;\alpha_{1},\ldots,\alpha_{k})$ defines an absolutely convergent Dirichlet series for $\mathfrak{R}(s)=\frac{1}{2}$ and for all $\alpha_{j}^{'}$s positive. Consequently, we have

\begin{multline}
M\left(\frac{1}{2};\alpha_{1},\ldots,\alpha_{k}\right)\\
=\sum_{\varepsilon_{j}=\pm1}\prod_{j=1}^{k}\mathcal{X}_{D}(\tfrac{1}{2}+\varepsilon_{j}\alpha_{j})^{-\tfrac{1}{2}}\prod_{1\leq i\leq j\leq k}\zeta_{A}(1+\varepsilon_{i}\alpha_{i}+\varepsilon_{j}\alpha_{j})A(\tfrac{1}{2};\varepsilon_{1}\alpha_{1},\ldots,\varepsilon_{k}\alpha_{k}),
\end{multline}
and so the conjectured asymptotic takes the form

\begin{multline}
\label{eq:eq5.3.4}
\sum_{D\in\mathcal{H}_{2g+1,q}}Z(\tfrac{1}{2},\alpha_{1},\ldots,\alpha_{k})\\
=\sum_{D\in\mathcal{H}_{2g+1,q}}\sum_{\varepsilon_{j}=\pm1}\prod_{j=1}^{k}\mathcal{X}_{D}(\tfrac{1}{2}+\varepsilon_{j}\alpha_{j})^{-\tfrac{1}{2}}A(\tfrac{1}{2};\varepsilon_{1}\alpha_{1},\ldots,\varepsilon_{k}\alpha_{k})\prod_{1\leq i\leq j\leq k}\zeta_{A}(1+\varepsilon_{i}\alpha_{1}+\varepsilon_{j}\alpha_{j})(1+o(1)).
\end{multline}

Using the definition of $\mathcal{X}_{D}(s)$, we have that

\begin{equation}
\mathcal{X}_{D}(\tfrac{1}{2}+\varepsilon_{j}\alpha_{j})^{-\tfrac{1}{2}}=|D|^{\tfrac{\varepsilon_{j}\alpha_{j}}{2}}X(\tfrac{1}{2}+\varepsilon_{j}\alpha_{j})^{-\tfrac{1}{2}},
\end{equation}
and substituting this into \eqref{eq:eq5.3.4}, after some arithmetical manipulations we are led to the following form of the conjecture:

\begin{multline}
\label{eq:eqfinalconj}
\sum_{D\in\mathcal{H}_{2g+1,q}}Z_{L}(\tfrac{1}{2}+\alpha_{1},\chi_{D})\ldots Z_{L}(\tfrac{1}{2}+\alpha_{k},\chi_{D})\\
=\sum_{\varepsilon_{j}\pm1}\prod_{j=1}^{k}X(\tfrac{1}{2}+\varepsilon_{j}\alpha_{j})^{-1/2}\sum_{D\in\mathcal{H}_{2g+1,q}}R_{k}(\tfrac{1}{2};\varepsilon_{1}\alpha_{1},\ldots,\varepsilon_{k}\alpha_{k})|D|^{\left(\tfrac{1}{2}\right)\sum_{j=1}^{k}\varepsilon_{j}\alpha_{j}}(1+o(1)).
\end{multline}

Note that \eqref{eq:eqfinalconj} is the function field analogue of the formula (4.4.22) in \cite{CFKRS}. 
% and it presents the analogue of the Euler product $R_{k,N}(\tfrac{1}{2},\varepsilon_{1}\alpha_{1},\ldots,\varepsilon_{k}\alpha_{k})$ for polynomials.

\subsection{The Contour Integral Representation of the Conjecture}

In this section we will use the following lemma from \cite{CFKRS}.

\begin{lem}[Conrey, Farmer, Keating, Rubinstein, Snaith] \label{thm:concisesumsymplectic}

Suppose $F$ is a symmetric function of $k$
variables, regular near $(0,\ldots,0)$, and that $f(s)$ has a simple
pole of residue~$1$ at $s=0$ and is otherwise analytic in a
neighborhood of $s=0$, and let
\begin{equation}
K(a_1,\ldots,a_k)=F(a_1,\ldots,a_k) \prod_{1\leq i \leq j\leq k}
f(a_i+a_j)
\end{equation}
or
\begin{equation}
K(a_1,\ldots,a_k)=F(a_1,\ldots,a_k) \prod_{1\leq i < j\leq k}
f(a_i+a_j).
\end{equation}
If $\alpha_i+\alpha_j$ are contained in the
region of analyticity of $f(s)$ then
\begin{eqnarray}
\sum_{\epsilon_j =\pm 1 }K(\epsilon_1 \alpha_1,\ldots,
\epsilon_k\alpha_k) & = &\frac{(-1)^{k(k-1)/2} } {(2\pi
i)^k} \frac{2^k}{k!} \oint \cdots \oint K(z_1,\ldots,z_k)\nonumber\\
& \times & \frac{\Delta(z_1^2,\ldots,z_k^2)^2 \prod_{j=1}^k z_j }
{\prod_{i=1}^k\prod_{j=1}^k (z_i-\alpha_j)(z_i+\alpha_j)}
\,dz_1\cdots dz_k , 
\end{eqnarray} and
\begin{eqnarray}
& & \sum_{\epsilon_j=\pm 1  }\left(\prod_{j=1}^k
\epsilon_j\right)K(\epsilon_1 \alpha_1,\ldots, \epsilon_k\alpha_k)\nonumber \\
& &\ \ \ \ =\frac{(-1)^{k(k-1)/2} } {(2\pi i)^k} \frac{2^k}{k!} \oint
\cdots \oint K(z_1,\ldots,z_k)\nonumber\\
& & \ \ \ \ \ \ \times\frac{\Delta(z_1^2,\ldots,z_k^2)^2
\prod_{j=1}^k \alpha_j } {\prod_{i=1}^k\prod_{j=1}^k
(z_i-\alpha_j)(z_i+\alpha_j)} \,dz_1\cdots dz_k, 
\end{eqnarray}
where the path of integration encloses the $\pm \alpha_j$'s.
\end{lem}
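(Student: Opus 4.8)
The plan is to evaluate the $k$-fold contour integral on the right-hand side directly by the residue theorem and to check that the residues reassemble into the sum over $\epsilon\in\{-1,1\}^k$. First I would analyse the analytic structure of the integrand
\[
\mathcal{I}(z_1,\ldots,z_k):=K(z_1,\ldots,z_k)\,\frac{\Delta(z_1^2,\ldots,z_k^2)^2\,\prod_{j=1}^k z_j}{\prod_{i=1}^k\prod_{j=1}^k(z_i-\alpha_j)(z_i+\alpha_j)}
\]
(with $\prod_j z_j$ replaced by $\prod_j\alpha_j$ for the second identity). The only possible singularities of $K$ arise from the factors $f(z_i+z_j)$, which have simple poles on the hyperplanes $z_i+z_j=0$. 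Writing $\Delta(z_1^2,\ldots,z_k^2)^2=\prod_{a<b}(z_b-z_a)^2(z_b+z_a)^2$, the double zero $(z_i+z_j)^2$ cancels each off-diagonal pole ($i<j$), while the factor $z_j$ in the numerator cancels the diagonal pole of $f(2z_j)$ at $z_j=0$; since $F$ is regular near the origin it contributes nothing. Hence $\mathcal{I}$ is meromorphic in the polydisc bounded by the contours, with singularities only at the simple poles $z_i=\pm\alpha_j$ coming from the denominator.

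Next I would evaluate the integral as an iterated residue, in $z_1$, then $z_2$, and so on. For each variable the relevant local factor is $z_i/(z_i^2-\alpha_j^2)$ (first identity) or $1/(z_i^2-\alpha_j^2)$ (second), whose residue at $z_i=\epsilon_i\alpha_j$ is $\tfrac12$ in the first case and $\epsilon_i/(2\alpha_j)$ in the second. Taking one pole per variable amounts to choosing, for each $i$, a sign $\epsilon_i$ and an index $j_i$, leaving behind the residual denominator $\prod_{l\ne j_i}(\alpha_{j_i}^2-\alpha_l^2)$ together with the evaluated factor $\Delta(\alpha_{j_1}^2,\ldots,\alpha_{j_k}^2)^2$, both of which are independent of the signs. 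The crucial simplification is that whenever two variables are sent to poles with the same $\alpha_j^2$, the factor $\Delta(z_1^2,\ldots,z_k^2)^2$ vanishes, so those residues are zero; thus only the assignments for which $j_1,\ldots,j_k$ is a permutation $\pi$ of $1,\ldots,k$ survive.

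Finally I would collect the constants. Because $F$ is symmetric, $K$ is symmetric, and the ratio $\Delta(\alpha_{\pi(1)}^2,\ldots,\alpha_{\pi(k)}^2)^2\big/\prod_i\prod_{l\ne\pi(i)}(\alpha_{\pi(i)}^2-\alpha_l^2)$ is independent of $\pi$, so all $k!$ permutations give the same contribution and the $1/k!$ cancels. For the identity permutation the residual denominator is $\prod_i\prod_{l\ne i}(\alpha_i^2-\alpha_l^2)=(-1)^{k(k-1)/2}\prod_{i<l}(\alpha_i^2-\alpha_l^2)^2=(-1)^{k(k-1)/2}\Delta(\alpha_1^2,\ldots,\alpha_k^2)^2$, so the surviving Vandermonde ratio equals exactly $(-1)^{k(k-1)/2}$; this cancels the prefactor sign because $(-1)^{k(k-1)}=1$, while the $k$ local residues $\tfrac12$ cancel the $2^k$. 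What remains is precisely $\sum_{\epsilon}K(\epsilon_1\alpha_1,\ldots,\epsilon_k\alpha_k)$, and in the second identity each $z_i=\epsilon_i\alpha_i$ contributes an additional $\epsilon_i$, yielding the weight $\prod_j\epsilon_j$.

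The step I expect to be most delicate is justifying that the poles of $f$ genuinely make no contribution to the iterated residue computation: when integrating one variable at a time, after $z_1,\ldots,z_{i-1}$ have been fixed on poles $\pm\alpha$, the hyperplanes $z_i=-z_{i'}$ sit at points of the form $\mp\alpha$ already lying among the $\pm\alpha_j$, and one must confirm that the double zero supplied by $\Delta(z_1^2,\ldots,z_k^2)^2$ still annihilates these would-be poles at each intermediate stage, not merely in the fully symmetric expression. Verifying this cancellation carefully, along with the sign bookkeeping in the final constant, is where the real work lies; the residue theorem and the symmetry of $K$ then make the rest mechanical. I would also check that the argument is unchanged when $K$ is built from $\prod_{1\le i<j\le k}f(a_i+a_j)$ rather than $\prod_{1\le i\le j\le k}f(a_i+a_j)$, the only difference being the absence of the diagonal poles $f(2z_j)$, which leaves both the analytic structure and the constants intact.
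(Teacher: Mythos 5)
This lemma is quoted verbatim from CFKRS (it is their Lemma~2.5.2), and the paper supplies no proof of it, so there is no internal argument to compare against; your residue-theorem computation is the standard way this identity is established. For the first identity your plan is correct and essentially complete: the double zeros of $\Delta(z_1^2,\ldots,z_k^2)^2$ along $z_i+z_j=0$ cancel the off-diagonal poles of $K$, the factor $\prod_j z_j$ cancels the poles of $f(2z_j)$ at $z_j=0$, and the ``delicate step'' you flag does go through at every intermediate stage: once $z_{i'}$ has been fixed at $\epsilon_{i'}\alpha_{j}$, at the point $z_i=-\epsilon_{i'}\alpha_{j}$ the factor $(z_i^2-\alpha_{j}^2)^2$ vanishes to order two while $f(z_{i'}+z_i)$ and the denominator each contribute a simple pole, so the net order is $2-1-1=0$ (regular), and at $z_i=+\epsilon_{i'}\alpha_{j}$ the count is $2-1=1$ (a zero); hence only permutation assignments survive. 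Your constant bookkeeping is also exactly right: $k!$ from symmetry, $2^{-k}$ from the local residues, and $\prod_{i}\prod_{l\neq i}(\alpha_i^2-\alpha_l^2)=(-1)^{k(k-1)/2}\Delta(\alpha_1^2,\ldots,\alpha_k^2)^2$, whose sign squares away against the prefactor. You should only add the standing genericity hypothesis that the $\alpha_j$ are distinct and nonzero, so that all $2k^2$ poles are simple and distinct; the general case then follows by continuity, both sides being analytic in the $\alpha_j$ there.

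The one genuine oversight is your closing claim that the two forms of $K$ are interchangeable for both identities. In the second identity the numerator is $\prod_j\alpha_j$ rather than $\prod_j z_j$, so if $K$ contains the diagonal factors $f(2z_j)$ their poles at $z_j=0$ are \emph{not} cancelled, and the integrand acquires a pole at the origin in each variable. With contours winding around the origin --- which is how this lemma is actually deployed in the paper and in the CFZ version quoted later (circles $|z_i|=1$) --- these poles contribute extra residues and the stated identity fails. Already for $k=1$ with $K(z)=F(z)f(2z)$ the right-hand side evaluates to $K(\alpha_1)-K(-\alpha_1)-F(0)/\alpha_1$ instead of $K(\alpha_1)-K(-\alpha_1)$. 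So the second identity must either be paired with the diagonal-free product $\prod_{1\le i<j\le k}f(a_i+a_j)$, or the phrase ``the path of integration encloses the $\pm\alpha_j$'s'' must be read strictly, as enclosing those points and no other singularity. This looseness is inherited from the statement itself (it is equally present in CFKRS) and is harmless for this paper, which only ever invokes the first identity with the diagonal-inclusive $K$; but a complete proof should record the restriction rather than assert that the cases are identical.
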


We will use this lemma to write conjecture \eqref{eq:eqfinalconj} for function fields as a contour integral. For this, note that

\begin{multline}
\sum_{D\in\mathcal{H}_{2g+1,q}}Z_{L}(\tfrac{1}{2}+\alpha_{1},\chi_{D})\ldots Z_{L}(\tfrac{1}{2}+\alpha_{k},\chi_{D})\\
=\sum_{D\in\mathcal{H}_{2g+1,q}}\prod_{j=1}^{k}\mathcal{X}_{D}(\tfrac{1}{2}+\alpha_{j})^{-1/2}L(\tfrac{1}{2}+\alpha_{1},\chi_{D})\ldots L(\tfrac{1}{2}+\alpha_{k},\chi_{D})
\end{multline}
and as $\mathcal{X}_{D}(\tfrac{1}{2}+\alpha_{j})^{-1/2}$ depends only on $|D|$, which is the same for all $D\in\mathcal{H}_{2g+1,q}$, we can factor it out, so that \eqref{eq:eqfinalconj} becomes

\begin{eqnarray}
& & \sum_{D\in\mathcal{H}_{2g+1,q}}L(\tfrac{1}{2}+\alpha_{1},\chi_{D})\ldots L(\tfrac{1}{2}+\alpha_{k},\chi_{D})\nonumber\\
%& = & \sum_{D\in\mathcal{H}_{2g+1,q}}\sum_{\varepsilon_{j}=\pm1}\prod_{j=1}^{k}X(\tfrac{1}{2}+\varepsilon_{j}\alpha_{j})^{-1/2}\prod_{j=1}^{k}\mathcal{X}_{D}(\tfrac{1}{2}+\varepsilon_{j}\alpha_{j})^{1/2}\nonumber\\
%& & \ \ \ \ \times R_{k}(\tfrac{1}{2};\varepsilon_{1}\alpha_{1},\ldots,\varepsilon_{k}\alpha_{k})|D|^{\tfrac{1}{2}\sum_{j=1}^{k}\varepsilon_{j}\alpha_{j}}(1+o(1))\nonumber\\
%& = & \sum_{D\in\mathcal{H}_{2g+1,q}}\prod_{j=1}^{k}X(\tfrac{1}{2}+\alpha_{j})|D|^{-\tfrac{1}{2}\sum_{j=1}^{k}\alpha_{j}}\sum_{\varepsilon_{j}=\pm1}\prod_{j=1}^{k}X(\tfrac{1}{2}+\varepsilon_{j}\alpha_{j})^{-1/2}R_{k}(\tfrac{1}{2};\varepsilon_{1}\alpha_{1},\ldots,\varepsilon_{k}\alpha_{k})\nonumber\\
%& & \ \ \ \ \times|D|^{\tfrac{1}{2}\sum_{j=1}^{k}\varepsilon_{j}\alpha_{j}}(1+O(|D|^{-1/2+\varepsilon}))\nonumber\\
& = & \sum_{D\in\mathcal{H}_{2g+1,q}}\prod_{j=1}^{k}X(\tfrac{1}{2}+\alpha_{j})|D|^{-\tfrac{1}{2}\sum_{j=1}^{k}\alpha_{j}}\sum_{\varepsilon_{j}=\pm1}\prod_{j=1}^{k}X(\tfrac{1}{2}+\varepsilon_{j}\alpha_{j})^{-1/2}\nonumber\\
& & \ \ \ \ \times A\left(\tfrac{1}{2};\varepsilon_{1}\alpha_{1},\ldots,\varepsilon_{k}\alpha_{k}\right)|D|^{\tfrac{1}{2}\sum_{j=1}^{k}\varepsilon_{j}\alpha_{j}}\prod_{1\leq i\leq j\leq k}\zeta_{A}(1+\varepsilon_{i}\alpha_{i}+\varepsilon_{j}\alpha_{j})(1+o(1)).
\end{eqnarray}
Hence, taking out a factor of $\log q$ from each term in the second product

\begin{eqnarray}
& & \sum_{D\in\mathcal{H}_{2g+1,q}}L(\tfrac{1}{2}+\alpha_{1},\chi_{D})\ldots L(\tfrac{1}{2}+\alpha_{k},\chi_{D})\nonumber\\
& = & \sum_{D\in\mathcal{H}_{2g+1,q}}\frac{\prod_{j=1}^{k}X(\tfrac{1}{2}+\alpha_{j})|D|^{-\tfrac{1}{2}\sum_{j=1}^{k}\alpha_{j}}}{(\log q)^{k(k+1)/2}}\sum_{\varepsilon_{j}=\pm1}\prod_{j=1}^{k}X(\tfrac{1}{2}+\varepsilon_{j}\alpha_{j})^{-1/2}A\left(\tfrac{1}{2};\varepsilon_{1}\alpha_{1},\ldots,\varepsilon_{k}\alpha_{k}\right)\nonumber\\
& & \label{eq:eqteste} \ \ \ \ \times|D|^{\tfrac{1}{2}\sum_{j=1}^{k}\varepsilon_{j}\alpha_{j}}\prod_{1\leq i\leq j\leq k}\zeta_{A}(1+\varepsilon_{i}\alpha_{i}+\varepsilon_{j}\alpha_{j})(\log q)(1+o(1)).
\end{eqnarray}
If we call 
\begin{equation}
F(\alpha_{1},\ldots,\alpha_{k})=\prod_{j=1}^{k}X(\tfrac{1}{2}+\alpha_{j})^{-1/2}A\left(\tfrac{1}{2};\varepsilon_{1}\alpha_{1},\ldots,\varepsilon_{k}\alpha_{k}\right)|D|^{\tfrac{1}{2}\sum_{j=1}^{k}\alpha_{j}},
\end{equation}
and
\begin{equation}
f(s)=\zeta_{A}(1+s)\log q \ \ \ \ \ \ \mathrm{and \ so} \ \ \ \ \ f(\alpha_{i}+\alpha_{j})=\zeta_{A}(1+\alpha_{i}+\alpha_{j})\log q
\end{equation}
we have that $f(s)$ has a simple pole at $s=0$ with residue $1$. 

Denoting
\begin{equation}
K(\alpha_{1},\ldots,\alpha_{k})=F(\alpha_{1},\ldots,\alpha_{k})\prod_{1\leq i\leq j\leq k}f(\alpha_{i}+\alpha_{j}),
\end{equation}
we can write \eqref{eq:eqteste} as
\begin{equation}
\label{eq:momentos}
\Bigg(\sum_{D\in\mathcal{H}_{2g+1,q}}\frac{\prod_{j=1}^{k}X(\tfrac{1}{2}+\alpha_{j})|D|^{-\tfrac{1}{2}\sum_{j=1}^{k}\alpha_{j}}}{(\log q)^{k(k+1)/2}}\sum_{\varepsilon_{j}=\pm1}K(\varepsilon_{1}\alpha_{1},\ldots,\varepsilon_{k}\alpha_{k})\Bigg)(1+o(1)),
\end{equation}
and now we can use Lemma \ref{thm:concisesumsymplectic} to write
\begin{eqnarray}
& & \sum_{D\in\mathcal{H}_{2g+1,q}}\frac{\prod_{j=1}^{k}X(\tfrac{1}{2}+\alpha_{j})|D|^{-\tfrac{1}{2}\sum_{j=1}^{k}\alpha_{j}}}{(\log q)^{k(k+1)/2}}\frac{(-1)^{k(k-1)/2} } {(2\pi
i)^k} \frac{2^k}{k!} \oint \cdots \oint K(z_1,\ldots,z_k)\nonumber\\
& & \ \ \ \ \ \ \ \ \ \ \ \ \ \ \ \ \ \ \ \ \ \ \ \ \ \ \ \ \ \ \ \ \ \ \ \ \ \ \ \ \ \ \ \ \ \times \frac{\Delta(z_1^2,\ldots,z_k^2)^2 \prod_{j=1}^k z_j }
{\prod_{i=1}^k\prod_{j=1}^k (z_i-\alpha_j)(z_i+\alpha_j)}
\,dz_1\cdots dz_k+o(|D|)  \nonumber\\
& = &\label{eq:eqtest1} \sum_{D\in\mathcal{H}_{2g+1,q}}\prod_{j=1}^{k}X(\tfrac{1}{2}+\alpha_{j})|D|^{-\tfrac{1}{2}\sum_{j=1}^{k}\alpha_{j}}\frac{(-1)^{k(k-1)/2} } {(2\pi
i)^k} \frac{2^k}{k!}\oint \cdots \oint F(z_1,\ldots,z_k)\nonumber\\
& & \times\prod_{1\leq i\leq j\leq k}\zeta_{A}(1+z_{i}+z_{k})\frac{\Delta(z_1^2,\ldots,z_k^2)^2 \prod_{j=1}^k z_j }
{\prod_{i=1}^k\prod_{j=1}^k (z_i-\alpha_j)(z_i+\alpha_j)}
\,dz_1\cdots dz_k+o(|D|).
\end{eqnarray}

If we denote
\begin{equation}
K(z_{1},\ldots,z_{k})=F(z_{1},\ldots,z_{k})\prod_{1\leq i\leq j\leq k}\zeta_{A}(1+z_{i}+z_{k}),
\end{equation}

we have that \eqref{eq:eqtest1} becomes
\begin{multline}
\sum_{D\in\mathcal{H}_{2g+1,q}}\prod_{j=1}^{k}X(\tfrac{1}{2}+\alpha_{j})|D|^{-\tfrac{1}{2}\sum_{j=1}^{k}\alpha_{j}}\frac{(-1)^{k(k-1)/2} } {(2\pi
i)^k} \frac{2^k}{k!}\oint \cdots \oint K(z_1,\ldots,z_k)\\
\ \ \ \ \ \ \ \ \ \ \ \ \ \ \ \ \ \ \ \ \ \ \ \ \ \ \ \ \ \ \ \ \ \ \times \frac{\Delta(z_1^2,\ldots,z_k^2)^2 \prod_{j=1}^k z_j }
{\prod_{i=1}^k\prod_{j=1}^k (z_i-\alpha_j)(z_i+\alpha_j)}
\,dz_1\cdots dz_k+o(|D|),
\end{multline}
and if we denote
\begin{equation}
G(z_{1},\ldots,z_{k})=\prod_{j=1}^{k}X(\tfrac{1}{2}+z_{j})^{-1/2}A(\tfrac{1}{2};z_{1},\ldots,z_{k})\prod_{1\leq i\leq j\leq k}\zeta_{A}(1+z_{i}+z_{j})
\end{equation}
we have that the equation above is
\begin{multline}
\sum_{D\in\mathcal{H}_{2g+1,q}}\prod_{j=1}^{k}X(\tfrac{1}{2}+\alpha_{j})|D|^{-\tfrac{1}{2}\sum_{j=1}^{k}\alpha_{j}}\frac{(-1)^{k(k-1)/2} } {(2\pi
i)^k} \frac{2^k}{k!}\oint \cdots \oint G(z_1,\ldots,z_k)\\
\ \ \ \ \ \ \ \ \ \ \ \ \\ \ \ \ \ \ \ \  \ \ \ \ \ \ \ \ \ \ \ \ \times |D|^{\tfrac{1}{2}\sum_{j=1}^{k}z_{j}}\frac{\Delta(z_1^2,\ldots,z_k^2)^2 \prod_{j=1}^k z_j }
{\prod_{i=1}^k\prod_{j=1}^k (z_i-\alpha_j)(z_i+\alpha_j)}
\,dz_1\cdots dz_k+o(|D|).
\end{multline}

Now calling

\begin{multline}
Q_{k}(x)=\frac{(-1)^{k(k-1)/2} } {(2\pi
i)^k} \frac{2^k}{k!}\\
\times\oint \cdots \oint G(z_1,\ldots,z_k)\frac{\Delta(z_1^2,\ldots,z_k^2)^2 \prod_{j=1}^k z_j }
{\prod_{i=1}^k\prod_{j=1}^k (z_i-\alpha_j)(z_i+\alpha_j)}q^{\tfrac{x}{2}\sum_{j=1}^{k}z_{j}}dz_1\cdots dz_k,
\end{multline}
and setting $\alpha_{j}=0$, we have arrived at the formulae given in Conjecture \ref{thm:myconjecture}. 

%\begin{remark}
%Note that \eqref{eq:eq5.1.11} is the function field analogue of the formula (1.5.11) in \cite{CFKRS}. 
%since it corresponds to the $k^{\mathrm{th}}$--moments of quadratic Dirichlet $L$--functions for the rational function field $\mathbb{F}_{q}(T)$.
%\end{remark}

\section{Some conjectural formulae for moments of $L$-functions in the Hyperelliptic Ensemble}

In this section we use Conjecture \ref{thm:myconjecture} to obtain explicit conjectural formulae for the first few moments of quadratic Dirichlet $L$--functions over function fields.

\subsection{First Moment}

We will use Conjecture \ref{thm:myconjecture} to determine the asymptotics of the first moment ($k=1$) of our family of $L$--functions and compare with the main theorem of \cite{AK}. Specifically, we will specialize the formula in Conjecture \ref{thm:myconjecture} for $k=1$ to compute 
\begin{equation}
\sum_{D\in\mathcal{H}_{2g+1,q}}L(\tfrac{1}{2},\chi_{D})=\sum_{D\in\mathcal{H}_{2g+1,q}}Q_{1}(\log_{q}|D|)(1+o(1)),
\end{equation}
where $Q_{1}(x)$ is a polynomial of degree $1$, i.e., $Q_{1}(x)=ax+b$. This will be done using the contour integral formula for $Q_{k}(x)$. We have,

\begin{equation}
\label{eq:eq5.5.1}
Q_{1}(x)=\frac{1}{\pi i}\oint\frac{G(z_{1})\Delta(z_{1}^{2})^{2}}{z_{1}}q^{\tfrac{x}{2}z_{1}}dz_{1}
\end{equation}
where
\begin{equation}
G(z_{1})=A(\tfrac{1}{2};z_{1})X(\tfrac{1}{2}+z_{1})^{-1/2}\zeta_{A}(1+2z_{1}).
\end{equation}

Remembering that,
\begin{equation}
\Delta(z_{1},\ldots,z_{k})=\prod_{1\leq i<j\leq k}(z_{j}-z_{i})
\end{equation}
is the Vandermonde determinant we have that,
\begin{equation}
\Delta(z_{1}^{2})^{2}=1
\end{equation}
and
\begin{equation}
X(\tfrac{1}{2}+z_{1})^{-1/2}=q^{-z_{1}/2}.
\end{equation}

So \eqref{eq:eq5.5.1} becomes,

\begin{equation}
\label{eq:eqcontour}
%\frac{1}{\pi i}\oint\frac{A(\tfrac{1}{2};z_{1})X(\tfrac{1}{2}+z_{1})^{-1/2}\zeta_{A}(1+2z_{1})}{z_{1}}q^{\tfrac{x}{2}z_{1}}dz_{1}\\
Q_{1}(x)=\frac{1}{\pi i}\oint\frac{A(\tfrac{1}{2};z_{1})\zeta_{A}(1+2z_{1})q^{-z_{1}/2}}{z_{1}}q^{\tfrac{x}{2}z_{1}}dz_{1}.
\end{equation}

We also have that,
\begin{eqnarray}
& & A(\tfrac{1}{2};z_{1})=\prod_{\substack{P \ \mathrm{monic} \\ \mathrm{irreducible}}}\left(1-\frac{1}{|P|^{1+2z_{1}}}\right)\nonumber\\
& & \ \ \ \ \ \ \ \ \ \ \ \ \ \  \times \left(\frac{1}{2}\left(\left(1-\frac{1}{|P|^{1/2+z_{1}}}\right)^{-1}+\left(1+\frac{1}{|P|^{1/2+z_{1}}}\right)^{-1}\right)+\frac{1}{|P|}\right)\nonumber\\
& & \ \ \ \ \ \ \ \ \ \ \ \ \ \  \times \left(1+\frac{1}{|P|}\right)^{-1}.
\end{eqnarray}

Our goal is to compute the integral \eqref{eq:eqcontour} where the contour is a small circle around the origin, and for that we need to locate the poles of the integrand,

\begin{equation}
f(z_{1})=\frac{A(\tfrac{1}{2};z_{1})\zeta_{A}(1+2z_{1})q^{-z_{1}/2}}{z_{1}}q^{\tfrac{x}{2}z_{1}}.
\end{equation}

We note that $f(z_{1})$ has a pole of order $2$ at $z_{1}=0$. To compute the residue we expand $f(z_{1})$ as a Laurent series and pick up the coefficient of $1/z_{1}$. Expanding the numerator of $f(z_{1})$ around $z_{1}=0$ we have,

\begin{enumerate}
\item {} 
$$A(\tfrac{1}{2};z_{1})=A(\tfrac{1}{2},0)+A^{'}(\tfrac{1}{2},0)z_{1}+\frac{1}{2}A^{''}(\tfrac{1}{2},0)z_{1}^{2}+\cdots$$
\item {}
$$q^{-z_{1}/2}=1-\frac{1}{2}(\log q)z_{1}+\frac{1}{8}(\log q)z_{1}^{2}+\cdots$$
\item {}
$$q^{\tfrac{x}{2}z_{1}}=1+\frac{1}{2}(\log q)xz_{1}+\frac{1}{8}(\log^{2}q)x^{2}z_{1}^{2}+\cdots$$
\item {}
$$\zeta_{A}(1+2z_{1})=\frac{1}{2\log q}\frac{1}{z_{1}}+\frac{1}{2}+\frac{1}{6}(\log q)z_{1}-\frac{1}{90}(\log^{3}q)z_{1}^{3}+\cdots$$
\end{enumerate}

Hence we can write,

\begin{multline}
f(z_{1})=\left(\frac{A(\tfrac{1}{2};0)}{z_{1}}+A^{'}(\tfrac{1}{2};0)+\frac{A^{''}(\tfrac{1}{2};0)}{2}z_{1}+\cdots\right)\left(1-\frac{1}{2}(\log q)z_{1}+\frac{1}{8}(\log q)z_{1}^{2}+\cdots\right)\\
\times \left(1+\frac{1}{2}(\log q)xz_{1}+\frac{1}{8}(\log^{2}q)x^{2}z_{1}^{2}+\cdots\right)\ \ \ \ \ \ \ \ \ \ \ \ \ \ \ \ \ \ \ \ \ \ \ \ \ \ \ \ \ \ \ \ \\
\times\left(\frac{1}{2\log q}\frac{1}{z_{1}}+\frac{1}{2}+\frac{1}{6}(\log q)z_{1}-\frac{1}{90}(\log^{3}q)z_{1}^{3}+\cdots\right).\ \ \ \ \ \ \ \ \ \ \ \ \ \ \ \ \ \ \ \ \ \ \ \ \ \ \ \ \ \ \
\end{multline}

Multiplying the above expression we identify the coefficient of $1/z_{1}$. Therefore

\begin{equation}
\label{eq:eq5.5.5}
\mathrm{Res}_{z_{1}=0}f(z_{1})=\frac{1}{2}A(\tfrac{1}{2};0)-\frac{1}{4}A(\tfrac{1}{2};0)+\frac{1}{4}A(\tfrac{1}{2};0)x+\frac{1}{2\log q}A^{'}(\tfrac{1}{2};0). 
\end{equation}

We find, after some straightforward calculations, that:

\begin{equation}
A(\tfrac{1}{2};0)=P(1)=\prod_{\substack{P \ \mathrm{monic} \\ \mathrm{irreducible}}}\left(1-\frac{1}{(|P|+1)|P|}\right)
\end{equation}
and
\begin{equation}
A^{'}(\tfrac{1}{2};0)=A(\tfrac{1}{2};0)(2\log q)\sum_{\substack{P \ \mathrm{monic} \\ \mathrm{irreducible}}}\frac{\mathrm{deg}(P)}{|P|(|P|+1)-1}
\end{equation}
and so \eqref{eq:eq5.5.5} is
\begin{equation}
\mathrm{Res}_{z_{1}=0}f(z_{1})=\frac{1}{4}P(1)+\frac{1}{4}P(1)x+P(1)\sum_{\substack{P \ \mathrm{monic} \\ \mathrm{irreducible}}}\frac{\mathrm{deg}(P)}{|P|(|P|+1)-1}. 
\end{equation}

Hence we have that,

\begin{multline}
\frac{1}{\pi i}\oint\frac{A(\tfrac{1}{2};z_{1})\zeta_{A}(1+2z_{1})q^{-z_{1}/2}}{z_{1}}q^{\tfrac{x}{2}z_{1}}dz_{1}\\
=\frac{1}{\pi i}2\pi i\left(\frac{1}{4}P(1)+\frac{1}{4}P(1)x+P(1)\sum_{\substack{P \ \mathrm{monic} \\ \mathrm{irreducible}}}\frac{\mathrm{deg}(P)}{|P|(|P|+1)-1}\right)\\
=\frac{1}{2}P(1)+\frac{1}{2}P(1)x+2P(1)\sum_{\substack{P \ \mathrm{monic} \\ \mathrm{irreducible}}}\frac{\mathrm{deg}(P)}{|P|(|P|+1)-1}.\ \ \ \ \ \ \ \ \ \ \ \ \ \ \ \ \ \ \ \ \ \ \ \ \ \ \ \
\end{multline}

So, 
\begin{equation}
Q_{1}(x)=\frac{1}{2}P(1)\left\{x+1+4\sum_{\substack{P \ \mathrm{monic} \\ \mathrm{irreducible}}}\frac{\mathrm{deg}(P)}{|P|(|P|+1)-1}\right\}.
\end{equation}

We therefore have that

\begin{eqnarray}
& & \sum_{D\in\mathcal{H}_{2g+1,q}}L(\tfrac{1}{2},\chi_{D})=\sum_{D\in\mathcal{H}_{2g+1,q}}Q_{1}(\log_{q}|D|)(1+o(1))\nonumber\\
& = & \sum_{D\in\mathcal{H}_{2g+1,q}}\frac{1}{2}P(1)\left\{\log_{q}|D|+1+4\sum_{\substack{P \ \mathrm{monic} \\ \mathrm{irreducible}}}\frac{\mathrm{deg}(P)}{|P|(|P|+1)-1}\right\}(1+o(1))\nonumber\\
& = & \frac{1}{2}P(1)\left\{\log_{q}|D|+1+4\sum_{\substack{P \ \mathrm{monic} \\ \mathrm{irreducible}}}\frac{\mathrm{deg}(P)}{|P|(|P|+1)-1}\right\}\sum_{D\in\mathcal{H}_{2g+1,q}}1+o(|D|)\nonumber\\
& = & \frac{P(1)}{2\zeta_{A}(2)}|D|\left\{\log_{q}|D|+1+4\sum_{\substack{P \ \mathrm{monic} \\ \mathrm{irreducible}}}\frac{\mathrm{deg}(P)}{|P|(|P|+1)-1}\right\}+o(|D|).
\end{eqnarray}

If we compare the main theorem of \cite{AK} with the conjecture we note that the main term and the principal lower order terms are the same. Hence the main theorem of \cite{AK} proves our conjecture with an error $O(|D|^{3/4+\varepsilon})$ when $k=1$.

\subsection{Second Moment}

For the second moment, Conjecture \ref{thm:myconjecture} asserts that

\begin{equation}
\sum_{D\in\mathcal{H}_{2g+1,q}}L(\tfrac{1}{2},\chi_{D})^{2}=\sum_{D\in\mathcal{H}_{2g+1,q}}Q_{2}(\log_{q}|D|)(1+o(1)),
\end{equation}
where

\begin{equation}
Q_{2}(x)=\frac{(-1)2^{2}}{2!}\frac{1}{(2\pi i)^{2}}\oint\oint\frac{G(z_{1},z_{2})\Delta(z_{1}^{2},z_{2}^{2})^{2}}{z_{1}^{3}z_{2}^{3}}q^{\tfrac{x}{2}(z_{1}+z_{2})}dz_{1}dz_{2}.
\end{equation}

%Denoting by $A_{z_{i}}(0, ... 0)$ meaning the derivative with respect to $z_{i}$ at the point $(0,\ldots,0)$ of the function of $A_{k}(z_{1},\ldots,z_{k})$ given in \eqref{eq:A}, we have that,

We denote by $A_{j}$ the partial derivative, evaluated at zero, of the function $A(\tfrac{1}{2};z_{1},\ldots,z_{k})$ with respect to $j$th variable, with repeated indices denoting higher derivatives.  So, for example 

\begin{equation}
A_{112}(0, 0, \ldots, 0)=\frac{\partial^{2}}{\partial z_{1}^{2}}\frac{\partial}{\partial z_{2}}A(\tfrac{1}{2}; z_{1},z_{2}, \ldots, z_{k})\bigg|_{z_{1}=z_{2}=\cdots=z_{k}=0}.
\end{equation}
We then have that,

\begin{multline}
\oint\oint\frac{G(z_{1},z_{2})\Delta(z_{1}^{2},z_{2}^{2})^{2}}{z_{1}^{3}z_{2}^{3}}q^{\tfrac{x}{2}(z_{1}+z_{2})}dz_{1}dz_{2}\\
=(2\pi i)^2\Bigg[-\frac{1}{48(\log q)^{3}}\Bigg((6+11x+6x^{2}+x^{3})A(0,0)(\log q)^{3}\ \ \ \ \ \ \ \ \ \ \ \ \ \ \ \ \ \ \ \ \ \ \ \ \ \ \ \ \ \ \ \ \\
+(11+12x+3x^{2})(\log q)^{2}(A_{2}(0,0)+A_{1}(0,0))+12(2+x)(\log q)A_{12}(0,0)\ \ \ \ \ \ \ \ \ \ \ \ \ \\
-2(A_{222}(0,0)-3A_{122}(0,0)-3A_{112}(0,0)+A_{111}(0,0))\Bigg)\Bigg].\ \ \ \ \ \ \ \ \ \ \ \ \ \ \ \ \ \ \ \ \ \ \ \ \ \ \ \ \ \ \ \ \ \ \ \ 
\end{multline}

%\begin{multline}
%\oint\oint\frac{G(z_{1},z_{2})\Delta(z_{1}^{2},z_{2}^{2})^{2}}{z_{1}^{3}z_{2}^{3}}q^{\tfrac{x}{2}(z_{1}+z_{2})}dz_{1}dz_{2}\\
%=(2\pi i)^2\Bigg[\frac{1}{48(\log q)^{3}}\Bigg(-6A(0,0)(\log q)^{3}-11xA(0,0)(\log q)^{3}-6x^{2}A(0,0)(\log q)^{3}\\
%-x^{3}A(0,0)(\log q)^{3}-11(\log q)^{2}A_{z_{2}}(0,0)-12x(\log q)^{2}A_{z_{2}}(0,0)-3x^{2}(\log q)^{2}A_{z_{1}}(0,0)\\
%+2A_{z_{2}z_{2}z_{2}}(0,0)-11(\log q)^{2}A_{z_{1}}(0,0)-12x(\log q)^{2}A_{z_{1}}(0,0)-3x^{2}(\log q)^{2}A_{z_{1}}(0,0)\\
%-24(\log q)A_{z_{1}z_{2}}(0,0)-12x(\log q)A_{z_{1}z_{2}}(0,0)-6A_{z_{1}z_{2}z_{2}}(0,0)-6A_{z_{2}z_{2}z_{1}}(0,0)+2A_{z_{1}z_{1}z_{1}}(0,0)\Bigg)\Bigg].
%\end{multline}

Hence the leading order asymptotic for the second moment for this family of $L$--functions can be written, conjecturally, as

\begin{equation}
\sum_{D\in\mathcal{H}_{2g+1,q}}L(\tfrac{1}{2},\chi_{D})^{2}\sim\frac{1}{24\zeta_{A}(2)}A(\tfrac{1}{2}; 0,0)|D|(\log_{q}|D|)^{3},
\end{equation}
when $g\rightarrow\infty$, where
\begin{equation}
A(\tfrac{1}{2}; 0,0)=\prod_{\substack{P \ \mathrm{monic}\\ \mathrm{irreducible}}}\left(1-\frac{4|P|^{2}-3|P|+1}{|P|^{4}+|P|^{3}}\right).
\end{equation}

\subsection{Third Moment}

For the third moment, our conjecture states that:

\begin{equation}
\sum_{D\in\mathcal{H}_{2g+1,q}}L(\tfrac{1}{2},\chi_{D})^{3}=\sum_{D\in\mathcal{H}_{2g+1,q}}Q_{3}(\log_{q}|D|)(1+o(1)),
\end{equation}
where

\begin{equation}
Q_{3}(x)=\frac{(-1)^{3}2^{3}}{3!}\frac{1}{(2\pi i)^{3}}\oint\oint\oint\frac{G(z_{1},z_{2},z_{3})\Delta(z_{1}^{2},z_{2}^{2},z_{3}^{2})^{2}}{z_{1}^{5}z_{2}^{5}z_{3}^{5}}q^{\tfrac{x}{2}(z_{1}+z_{2}+z_{3})}dz_{1}dz_{2}dz_{3}.
\end{equation}

Computing the triple contour integral with the help of the symbolic manipulation software MATHEMATICA we obtain %and using the notation that $A_{j}(\tfrac{1}{2};z_{1},\ldots,z_{k})=A_{j}(z_{1},\ldots,z_{k})$ we obtain

\begin{multline}
\oint\oint\oint\frac{G(z_{1},z_{2},z_{3})\Delta(z_{1}^{2},z_{2}^{2},z_{3}^{2})^{2}}{z_{1}^{5}z_{2}^{5}z_{3}^{5}}q^{\tfrac{x}{2}(z_{1}+z_{2}+z_{3})}dz_{1}dz_{2}dz_{3}\\
=(2\pi i)^{3}\Bigg[-\frac{1}{11520(\log q)^{6}}\Bigg(3(3+x)^{2}(40+78x+49x^{2}+12x^{3}+x^{4})A(0,0,0)(\log q)^{6}\ \ \ \ \ \ \ \ \ \ \ \ \ \ \ \ \ \ \ \ \ \ \ \ \ \ \ \ \ \ \ \ \ \ \ \ \ \ \ \ \ \ \ \ \ \ \ \ \ \ \ \ \ \ \ \  \ \ \ \ \ \ \ \ \\
+4(471+949x+720x^{2}+260x^{3}+45x^{4}+3x^{5})(\log q)^{5}(A_{3}(0,0,0)+A_{2}(0,0,0)\ \ \ \ \ \ \ \ \ \ \ \ \ \ \ \ \ \ \ \ \ \ \ \ \ \ \ \ \ \ \ \ \ \ \ \ \ \ \ \ \ \ \ \ \ \ \ \ \ \ \ \ \ \ \ \  \ \ \ \ \ \ \ \ \\
+A_{1}(0,0,0))+4(949+1440x+780x^{2}+180x^{3}+15x^{4})(\log q)^{4}(A_{23}(0,0,0)\ \ \ \ \ \ \ \ \ \ \ \ \ \ \ \ \ \ \ \ \ \ \ \ \ \ \ \ \ \ \ \ \ \ \ \ \ \ \ \ \ \ \ \ \ \ \ \ \ \ \ \ \ \ \ \  \ \ \ \ \ \ \ \ \\
+A_{13}(0,0,0)+A_{12}(0,0,0))-10(24+26x+9x^{2}+x^{3})(\log q)^{3}(2A_{333}(0,0,0)\ \ \ \ \ \ \ \ \ \ \ \ \ \ \ \ \ \ \ \ \ \ \ \ \ \ \ \ \ \ \ \ \ \ \ \ \ \ \ \ \ \ \ \ \ \ \ \ \ \ \ \ \ \ \ \  \ \ \ \ \ \ \ \ \\
-3A_{233}(0,0,0)-3A_{223}(0,0,0)+2A_{222}(0,0,0)-3A_{133}(0,0,0)-36A_{123}(0,0,0)\ \ \ \ \ \ \ \ \ \ \ \ \ \ \ \ \ \ \ \ \ \ \ \ \ \ \ \ \ \ \ \ \ \ \ \ \ \ \ \ \ \ \ \ \ \ \ \ \ \ \ \ \ \ \ \  \ \ \ \ \ \ \ \ \\
-3A_{122}(0,0,0)-3A_{113}(0,0,0)-3A_{112}(0,0,0)+2A_{111}(0,0,0))\ \ \ \ \ \ \ \ \ \ \ \ \ \ \ \ \ \ \ \ \ \ \ \ \ \ \ \ \ \ \ \ \ \ \ \ \ \ \ \ \ \ \ \ \ \ \ \ \ \ \ \ \ \ \ \  \ \ \ \ \ \ \ \ \\
-20(26+18x+3x^{2})(\log q)^{2}(A_{2333}(0,0,0)+A_{2223}(0,0,0)\ \ \ \ \ \ \ \ \ \ \ \ \ \ \ \ \ \ \ \ \ \ \ \ \ \ \ \ \ \ \ \ \ \ \ \ \ \ \ \ \ \ \ \ \ \ \ \ \ \ \ \ \ \ \ \  \ \ \ \ \ \ \ \ \\
+A_{1333}(0,0,0)-6A_{1233}(0,0,0)-6A_{1223}(0,0,0)+A_{1222}(0,0,0)-6A_{1123}(0,0,0)\ \ \ \ \ \ \ \ \ \ \ \ \ \ \ \ \ \ \ \ \ \ \ \ \ \ \ \ \ \ \ \ \ \ \ \ \ \ \ \ \ \ \ \ \ \ \ \ \ \ \ \ \ \ \ \  \ \ \ \ \ \ \ \ \\
+A_{1113}(0,0,0)+A_{1112}(0,0,0))+6(3+x)(\log q)(2A_{33333}(0,0,0)-5A_{23333}(0,0,0)\ \ \ \ \ \ \ \ \ \ \ \ \ \ \ \ \ \ \ \ \ \ \ \ \ \ \ \ \ \ \ \ \ \ \ \ \ \ \ \ \ \ \ \ \ \ \ \ \ \ \ \ \ \ \ \  \ \ \ \ \ \ \ \ \\
-10A_{22333}(0,0,0)-10A_{22233}(0,0,0)-5A_{22223}(0,0,0)+2A_{22222}(0,0,0)\ \ \ \ \ \ \ \ \ \ \ \ \ \ \ \ \ \ \ \ \ \ \ \ \ \ \ \ \ \ \ \ \ \ \ \ \ \ \ \ \ \ \ \ \ \ \ \ \ \ \ \ \ \ \ \  \ \ \ \ \ \ \ \ \\
-5A_{13333}(0,0,0)+60A_{12233}(0,0,0)-5A_{12222}(0,0,0)-10A_{11333}(0,0,0)\ \ \ \ \ \ \ \ \ \ \ \ \ \ \ \ \ \ \ \ \ \ \ \ \ \ \ \ \ \ \ \ \ \ \ \ \ \ \ \ \ \ \ \ \ \ \ \ \ \ \ \ \ \ \ \  \ \ \ \ \ \ \ \ \\
+60A_{11233}(0,0,0)+60A_{11223}(0,0,0)-10A_{11222}(0,0,0)-10A_{11133}(0,0,0)\ \ \ \ \ \ \ \ \ \ \ \ \ \ \ \ \ \ \ \ \ \ \ \ \ \ \ \ \ \ \ \ \ \ \ \ \ \ \ \ \ \ \ \ \ \ \ \ \ \ \ \ \ \ \ \  \ \ \ \ \ \ \ \ \\
-10A_{11122}(0,0,0)-5A_{11113}(0,0,0)-5A_{11112}(0,0,0)+2A_{11111}(0,0,0))\ \ \ \ \ \ \ \ \ \ \ \ \ \ \ \ \ \ \ \ \ \ \ \ \ \ \ \ \ \ \ \ \ \ \ \ \ \ \ \ \ \ \ \ \ \ \ \ \ \ \ \ \ \ \ \  \ \ \ \ \ \ \ \\
+4(3A_{233333}(0,0,0)-20A_{222333}(0,0,0)+3A_{222223}(0,0,0)+3A_{222223}(0,0,0)\ \ \ \ \ \ \ \ \ \ \ \ \ \ \ \ \ \ \ \ \ \ \ \ \ \ \ \ \ \ \ \ \ \ \ \ \ \ \ \ \ \ \ \ \ \ \ \ \ \ \ \ \ \ \ \  \ \ \ \ \ \ \ \ \ \ \ \ \ \ \ \ \ \ \ \ \ \ \ \ \ \ \ \ \ \ \ \ \ \ \ \ \ \ \ \ \ \ \ \ \ \ \ \ \ \ \ \ \ \ \ \ \ \ \ \ \ \ \ \ \ \ \ \ \ \ \ \ \ \ \ \ \ \ \ \ \ \ \ \ \ \ \  \ \ \ \ \ \ \ \ \\
-30A_{123333}(0,0,0)+30A_{122333}(0,0,0)+30A_{122233}(0,0,0)-30A_{122223}(0,0,0)\ \ \ \ \ \ \ \ \ \ \ \ \ \ \ \ \ \ \ \ \ \ \ \ \ \ \ \ \ \ \ \ \ \ \ \ \ \ \ \ \ \ \ \ \ \ \ \ \ \ \ \ \ \ \ \  \ \ \ \ \ \ \ \ \\
+3A_{122222}(0,0,0)+30A_{112333}(0,0,0)+30A_{112223}(0,0,0)-20A_{111333}(0,0,0)\ \ \ \ \ \ \ \ \ \ \ \ \ \ \ \ \ \ \ \ \ \ \ \ \ \ \ \ \ \ \ \ \ \ \ \ \ \ \ \ \ \ \ \ \ \ \ \ \ \ \ \ \ \ \ \  \ \ \ \ \ \ \ \ \\
+30A_{111233}(0,0,0)+30A_{111223}(0,0,0)-20A_{111222}(0,0,0)-30A_{111123}\ \ \ \ \ \ \ \ \ \ \ \ \ \ \ \ \ \ \ \ \ \ \ \ \ \ \ \ \ \ \ \ \ \ \ \ \ \ \ \ \ \ \ \ \ \ \ \ \ \ \ \ \ \ \ \  \ \ \ \ \ \ \ \ \\
+3A_{111113}(0,0,0)+3A_{11112}(0,0,0))\Bigg)\Bigg].\ \ \ \ \ \ \ \ \ \ \ \ \ \  \ \ \ \ \ \ \ \ \ \ \ \ \ \ \ \ \ \ \ \ \ \ \ \ \ \ \ \ \ \ \ \ \ \ \ \  \ \ \ \ \ \ \ \ \ \ \ \ \ \ \ \ \ \ \ \ \ \ \ \  \ \ \ \ \ \ \ \ \ \ \ \ \ \ \ \ \ \ \ \ \ \ \ \  \ \ \
\end{multline}

And so, identifying the coefficient of $x^{6}$, we conjecture 

\begin{equation}
\sum_{D\in\mathcal{H}_{2g+1,q}}L(\tfrac{1}{2},\chi_{D})^{3}\sim\frac{1}{2880\zeta_{A}(2)}A(\tfrac{1}{2};0,0,0)|D|(\log_{q}|D|)^{6},
\end{equation}
as $g\rightarrow\infty$, where
\begin{equation}
A(\tfrac{1}{2};0,0,0)=\prod_{\substack{P \ \mathrm{monic}\\ \mathrm{irreducible}}}\left(1-\frac{12|P|^{5}-23|P|^{4}+23|P|^{3}-15|P|^{2}+6|P|-1}{|P|^{6}(|P|+1)}\right).
\end{equation}

\subsection{Leading Order for general $k$.}In this section we will show how to obtain an explicit conjecture for the leading order asymptotic of the moments for a general integer $k$. The calculations presented here follow closely those presented in \cite{KO}. The main result is the following conjecture:

\begin{thm}
\label{conj:assintotico}
Conditional on Conjecture \ref{thm:myconjecture} we have that as $g\rightarrow\infty$ the following holds
\begin{equation}
\sum_{D\in\mathcal{H}_{2g+1,q}}L(\tfrac{1}{2},\chi_{D})^{k}\sim\frac{|D|}{\zeta_{A}(2)}(\log_{q}|D|)^{k(k+1)/2}A(\tfrac{1}{2};0,\ldots,0)\prod_{j=1}^{k}\frac{j!}{(2j)!}.
\end{equation} 
\end{thm}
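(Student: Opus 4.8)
The plan is to reduce the statement to the computation of a single coefficient of the polynomial $Q_k$, and then to evaluate that coefficient by extracting one multiple residue from the $k$--fold contour integral in Conjecture \ref{thm:myconjecture}. First I would observe that, since $|D|=q^{2g+1}$ is constant across the ensemble $\mathcal{H}_{2g+1,q}$ and $\#\mathcal{H}_{2g+1,q}=|D|/\zeta_A(2)$ by \eqref{eq:3.17}, Conjecture \ref{thm:myconjecture} gives
\[
\sum_{D\in\mathcal{H}_{2g+1,q}}L(\tfrac{1}{2},\chi_D)^k=\frac{|D|}{\zeta_A(2)}\,Q_k(\log_q|D|)\,(1+o(1)).
\]
Writing $N=k(k+1)/2$ for the degree of $Q_k$, the asserted asymptotic is therefore equivalent to the single identity $[x^N]\,Q_k(x)=A(\tfrac{1}{2};0,\ldots,0)\prod_{j=1}^k j!/(2j)!$, so everything comes down to isolating the top--degree coefficient of $Q_k$.

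Next I would extract $x^N$ from the contour integral. The variable $x$ enters only through $q^{\frac{x}{2}\sum_j z_j}$, whose degree--$N$ homogeneous part in the $z_j$ is $\frac{(\frac{1}{2}\log q)^N}{N!}(\sum_j z_j)^N$ and carries exactly the factor $x^N$. A homogeneity count then shows that only the most singular part of $G$ contributes at this top degree: in $G$ I may replace $A(\tfrac{1}{2};z_1,\ldots,z_k)$ by its value $A(\tfrac{1}{2};0,\ldots,0)$, replace each $X(\tfrac{1}{2}+z_j)^{-1/2}=q^{-z_j/2}$ by $1$, and replace each $\zeta_A(1+z_i+z_j)$ by its polar part $\frac{1}{(z_i+z_j)\log q}$ (recall $\zeta_A(1+w)=(1-q^{-w})^{-1}$ has a simple pole of residue $1/\log q$ at $w=0$). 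Any higher Taylor term of these factors raises the homogeneity degree of the meromorphic part, forcing the exponential to supply fewer than $N$ powers of $\sum_j z_j$ and hence contributing only to $x^{<N}$. I would also note that the apparent off--diagonal poles at $z_i=-z_j$ $(i<j)$ are killed by the factors $(z_i+z_j)^2$ in $\Delta(z_1^2,\ldots,z_k^2)^2$, so the integrand is singular only at the origin and the $k$--fold integral is a genuine iterated residue there.

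After these replacements, the elementary identity
\[
\prod_{1\le i<j\le k}(z_j^2-z_i^2)^2\cdot\prod_{1\le i\le j\le k}\frac{1}{z_i+z_j}=\frac{1}{2^k\prod_{j}z_j}\prod_{1\le i<j\le k}(z_j-z_i)^2(z_j+z_i)
\]
collapses the residue to a pure coefficient extraction, and all powers of $\log q$ cancel, leaving
\[
[x^N]\,Q_k(x)=A(\tfrac{1}{2};0,\ldots,0)\,\frac{(-1)^{k(k-1)/2}}{k!\,2^N\,N!}\,C_k,
\]
where $C_k$ is the coefficient of $\prod_{j=1}^k z_j^{2k-1}$ in $\big(\prod_{1\le i<j\le k}(z_j-z_i)^2(z_j+z_i)\big)\big(\sum_{j=1}^k z_j\big)^N$. (As a check, $C_1=1$ and $C_2=-4$ reproduce the leading coefficients $\tfrac12$ and $\tfrac{1}{24}$ found in Sections~5.1 and~5.2.)

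The final and hardest step is to prove the purely combinatorial identity
\[
\frac{(-1)^{k(k-1)/2}}{k!\,2^N\,N!}\,C_k=\prod_{j=1}^k\frac{j!}{(2j)!}.
\]
This is exactly the function--field shadow of the leading--order symplectic random--matrix moment, and I would establish it following the method of \cite{KO}: either by recognising the normalised contour integral defining $C_k$ as a Selberg/Mehta--type integral attached to $USp(2N)$, or by the direct combinatorial evaluation of the iterated residue carried out there. I expect this last identity to be the main obstacle, since it requires the full strength of the symplectic symmetric--function combinatorics rather than any feature special to function fields; the function--field input is entirely contained in the elementary pole structure of $\zeta_A$ used in the second step.
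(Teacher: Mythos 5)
Your proposal is correct, and it reaches the result by a genuinely different (and in some ways cleaner) mechanism than the paper. You work directly with the polynomial $Q_k$ from Conjecture \ref{thm:myconjecture} and extract its top coefficient \emph{exactly}: the homogeneity count showing that only the polar parts of $\zeta_{A}(1+z_i+z_j)$, the constant term of $A$, and the constant term of $X^{-1/2}$ can pair with the degree-$N$ part of $q^{\frac{x}{2}\sum z_j}$ is sound, the cancellation of the off-diagonal poles $z_i=-z_j$ against $\Delta(z_1^2,\ldots,z_k^2)^2$ is correct, your collapsing identity $\prod_{i<j}(z_j^2-z_i^2)^2\prod_{i\le j}(z_i+z_j)^{-1}=\frac{1}{2^k\prod_j z_j}\prod_{i<j}(z_j-z_i)^2(z_j+z_i)$ checks out, and your bookkeeping of the powers of $\log q$ and of $2$ (verified by $C_1=1$, $C_2=-4$ against Sections 5.1--5.2) is right. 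The paper instead starts one step earlier, from the sign-sum formula \eqref{eq:momentos}: it proves an asymptotic lemma (modelled on \cite{KO}) in which the sum over $\varepsilon_j=\pm1$ is converted to a contour integral via Lemma \ref{thm:concisesumsymplectic}, the variables are rescaled as $z_j=2v_j/\log|D|$, and the limit $g\rightarrow\infty$ is taken inside the integral, leaving a fixed $|D|$-independent multiple contour integral. Your route buys an exact polynomial-coefficient identity with no limit interchange and is more faithful to the literal statement ``conditional on Conjecture \ref{thm:myconjecture}''; the paper's lemma is more general (any admissible $F$ and $f$) and runs parallel to the number-field computation. Crucially, both routes terminate at the same object --- your constant $\frac{(-1)^{k(k-1)/2}}{k!\,2^N N!}C_k$ is precisely the paper's limiting integral $\frac{(-1)^{k(k-1)/2}}{(2\pi i)^k k!}\oint\cdots\oint e^{\sum v_j}\prod_{i<j}(v_i+v_j)^{-1}\Delta(v^2)^2\prod_j v_j^{-2k}\,dv$ divided by $2^N$ --- and both defer its evaluation as $\prod_{j=1}^k j!/(2j)!$ to the random matrix literature: the paper cites equation (3.36) of \cite{CFKRS2}, Lemma 2.5.2 of \cite{CFKRS} and the symplectic moment of \cite{KeS2}, while you cite \cite{KO} and Selberg-type integrals. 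So the step you flag as the ``main obstacle'' is not a gap relative to the paper's own standard of proof; if you wanted to close it self-containedly, the shortest path is the paper's chain of citations, which identifies your $C_k$-integral as the $N\rightarrow\infty$ limit of the moment of the characteristic polynomial at the symmetry point for $USp(2N)$.
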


To establish the above theorem we will first prove the following lemma.

\begin{lem}
Suppose $F$ is a symmetric function of $k$ variables, regular near $(0,\ldots,0)$ and $f(s)$ has a simple pole of residue $1$ at $s=0$ and is otherwise analytic in a neighborhood of $s=0$. Let
\begin{equation}
K(|D|;w_{1},\ldots,w_{k})=\sum_{\varepsilon_{j}=\pm1}e^{\tfrac{1}{2}\log|D|\sum_{j=1}^{k}\varepsilon_{j}w_{j}}F(\varepsilon_{1}w_{1},\ldots,\varepsilon_{j}w_{j})\prod_{1\leq i\leq j\leq k}f(\varepsilon_{i}w_{i}+\varepsilon_{j}w_{j})
\end{equation}
and define $I(|D|,k;w=0)$ to be the value of $K$ when $w_{1},\ldots,w_{k}=0$. We have that,
\begin{equation}
I(|D|,k;0)\sim(\tfrac{1}{2}\log|D|)^{k(k+1)/2}F(0,\ldots,0)2^{k(k+1)/2}\left(\prod_{j=1}^{k}\frac{j!}{(2j)!}\right).
\end{equation}
\end{lem}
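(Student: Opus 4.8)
The plan is to recognize $K(|D|;w)$ as an instance of the combinatorial sum treated by Lemma \ref{thm:concisesumsymplectic}, convert it into a contour integral, let the shifts tend to zero, and then extract the leading power of $\log|D|$ by a scaling argument that reduces everything to a single universal constant. First I would absorb the exponential into the symmetric function by setting
\[
\widetilde{F}(w_1,\ldots,w_k)=e^{\tfrac12\log|D|\sum_{j=1}^k w_j}F(w_1,\ldots,w_k),
\]
which is again symmetric and regular near the origin. Then $K(|D|;w)=\sum_{\varepsilon_j=\pm1}\widetilde{F}(\varepsilon_1 w_1,\ldots,\varepsilon_k w_k)\prod_{1\le i\le j\le k}f(\varepsilon_i w_i+\varepsilon_j w_j)$ has exactly the shape of the first identity in Lemma \ref{thm:concisesumsymplectic} with $\alpha_j=w_j$. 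Applying that lemma writes $K(|D|;w)$ as a $k$-fold contour integral whose only $w$-dependence sits in the denominator $\prod_{i=1}^k\prod_{j=1}^k(z_i-w_j)(z_i+w_j)$. Since this is analytic in $w$ near the origin with the contour a fixed small circle about $0$, I can pass to the limit $w\to 0$ term by term; the denominator becomes $\prod_{i=1}^k z_i^{2k}$, giving
\[
I(|D|,k;0)=\frac{(-1)^{k(k-1)/2}}{(2\pi i)^k}\frac{2^k}{k!}\oint\cdots\oint \widetilde F(z)\prod_{1\le i\le j\le k}f(z_i+z_j)\frac{\Delta(z_1^2,\ldots,z_k^2)^2\prod_{j=1}^k z_j}{\prod_{i=1}^k z_i^{2k}}\,dz.
\]

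Next I would isolate the leading asymptotics. Writing $f(s)=s^{-1}+O(1)$ near $s=0$ and $F(z)=F(0,\ldots,0)+O(z)$, the dominant contribution as $\log|D|\to\infty$ comes from replacing $F$ by $F(0,\ldots,0)$ and each $f(z_i+z_j)$ by $(z_i+z_j)^{-1}$, since any lower-order substitution costs a power of $\log|D|$. Scaling $z_j\mapsto z_j/x$ with $x=\tfrac12\log|D|$ turns $e^{x\sum z_j}$ into $e^{\sum z_j}$ and, by a homogeneity count of the rational integrand, extracts exactly a factor $x^{k(k+1)/2}$. This reduces the claim to showing that the resulting $D$-independent constant $C_k$ equals $2^{k(k+1)/2}\prod_{j=1}^k j!/(2j)!$. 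Here I would simplify using $\prod_{1\le i\le j\le k}(z_i+z_j)=2^k\prod_i z_i\prod_{i<j}(z_i+z_j)$ together with $\Delta(z_1^2,\ldots,z_k^2)^2=\prod_{i<j}(z_j-z_i)^2(z_i+z_j)^2$ and the identity $\prod_{i<j}(z_j-z_i)^2(z_i+z_j)=\Delta(z_1,\ldots,z_k)\,\Delta(z_1^2,\ldots,z_k^2)$, so that the $2^k$ cancels and
\[
C_k=\frac{(-1)^{k(k-1)/2}}{k!}\,[\,z_1^{2k-1}\cdots z_k^{2k-1}\,]\Big(e^{z_1+\cdots+z_k}\,\Delta(z_1,\ldots,z_k)\,\Delta(z_1^2,\ldots,z_k^2)\Big),
\]
a single symmetric coefficient extraction.

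I would then evaluate $C_k$. Expanding the two Vandermondes as signed sums over $S_k$ and $e^{\sum z_j}$ as a product of exponential series, the coefficient of $z_1^{2k-1}\cdots z_k^{2k-1}$ becomes a double sum over permutations $\sigma,\tau$ with summand $\mathrm{sgn}(\sigma\tau)\prod_j 1/(2k+2-\sigma(j)-2\tau(j))!$ (with the convention $1/n!=0$ for $n<0$). Reindexing the inner sum by $\rho=\sigma\tau^{-1}$ collapses it to a single determinant independent of $\tau$, so the double sum equals $k!\det M$ with $M_{a,b}=1/(2k+2-a-2b)!$, and hence $C_k=(-1)^{k(k-1)/2}\det M$. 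The main obstacle is the final evaluation of this factorial determinant: I expect to prove
\[
\det\Big(\tfrac{1}{(2k+2-a-2b)!}\Big)_{a,b=1}^k=(-1)^{k(k-1)/2}2^{k(k+1)/2}\prod_{j=1}^k\frac{j!}{(2j)!}
\]
by standard determinant-evaluation techniques (normalizing each row by the appropriate factorial, performing column operations to reach a determinant with binomial-coefficient entries, and applying a known closed form, as in Krattenthaler's calculus). Everything apart from this determinant identity is bookkeeping; the determinant is where the symplectic random-matrix constant $\prod_{j=1}^k j!/(2j)!$ genuinely emerges, and I would cross-check it against the direct residue computations for $k=1$ and $k=2$, which give $C_1=1$ and $C_2=\tfrac13$, in agreement with the claimed formula.
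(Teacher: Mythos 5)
Your proposal is correct, and up to the rescaling step it coincides exactly with the paper's own argument: you absorb the exponential into the symmetric function, invoke Lemma \ref{thm:concisesumsymplectic} (CFKRS Lemma 2.5.2) with $\alpha_j=w_j$, let $w\to 0$ so the denominator degenerates to $\prod_j z_j^{2k}$, and scale $z_j\mapsto z_j/(\tfrac12\log|D|)$ to pull out the factor $(\tfrac12\log|D|)^{k(k+1)/2}$ by homogeneity. The divergence is in how the remaining universal constant is handled. The paper does not compute it at all: it cites equation (3.36) of \cite{CFKRS2}, Lemma 2.5.2 of \cite{CFKRS}, and the Keating--Snaith evaluation \cite{KeS2} of the symplectic moment at the symmetry point, so the constant $2^{k(k+1)/2}\prod_{j=1}^{k}j!/(2j)!$ is imported from random matrix theory. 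You instead evaluate it self-containedly: cancelling the $2^k$ via $\prod_{1\le i\le j\le k}(z_i+z_j)=2^k\prod_i z_i\prod_{i<j}(z_i+z_j)$ and using $\Delta(z^2)^2/\prod_{i<j}(z_i+z_j)=\Delta(z)\Delta(z^2)$ reduces the constant to a single coefficient extraction, and the double permutation expansion correctly collapses (via $\rho=\sigma\tau^{-1}$) to $(-1)^{k(k-1)/2}\det\bigl(1/(2k+2-a-2b)!\bigr)_{a,b=1}^{k}$. That determinant identity is true — for $k=1,2,3$ one gets $1$, $-\tfrac13$, $-\tfrac1{45}$, matching $(-1)^{k(k-1)/2}2^{k(k+1)/2}\prod_{j=1}^k j!/(2j)!$ — and it is exactly the kind of factorial determinant that standard (Krattenthaler-type) techniques dispose of; it is, in effect, the combinatorial content of the Keating--Snaith constant that the paper quotes. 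What the paper's route buys is the conceptual identification of the constant with the $USp(2N)$ moment, consistent with the symmetry-type philosophy of the whole program; what your route buys is an elementary, self-contained proof independent of the random-matrix literature. The only incompleteness is that the final determinant evaluation is asserted rather than carried out, so to make the argument airtight you would need to either supply that evaluation or cite a standard source for it — but this is a smaller debt than the paper's, which leaves the entire constant to external references.
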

\begin{proof}
We begin by defining the following function

\begin{equation}
G(|D|;w_{1},\ldots,w_{k})=e^{\tfrac{1}{2}\log|D|\sum_{j=1}^{k}w_{j}}F(w_{1},\ldots,w_{k})\prod_{1\leq i\leq j\leq k}f(w_{i}+w_{j}).
\end{equation}

So by Lemma 2.5.2 of \cite{CFKRS} we have,
\begin{multline}
\label{eq:contour2}
\sum_{\varepsilon_{j}=\pm1}G(|D|;\varepsilon_{1}w_{1},\ldots,\varepsilon_{k}w_{k}) \\
=\frac{(-1)^{k(k-1)/2}}{(2\pi i)^{k}}\frac{2^{k}}{k!}\oint\cdots\oint G(|D|;z_{1},\ldots,z_{k})\frac{\Delta(z_{1}^{2},\ldots,z_{k}^{2})^{2}\prod_{j=1}^{k}z_{j}}{\prod_{i=1}^{k}\prod_{j=1}^{k}(z_{j}-w_{j})(z_{i}-w_{j})}dz_{1}\ldots dz_{k}.
\end{multline}
We will analyze this integral as $w_{j}\rightarrow0$. It follows from \eqref{eq:contour2} that
\begin{multline}
I(|D|,k;0)\\
=\frac{(-1)^{k(k-1)/2}}{(2\pi i)^{k}}\frac{2^{k}}{k!}\oint\cdots\oint G(|D|;z_{1},\ldots,z_{k})\frac{\Delta(z_{1}^{2},\ldots,z_{k}^{2})^{2}\prod_{j=1}^{k}z_{j}}{\prod_{j=1}^{k}z_{j}^{2k}}dz_{1}\ldots dz_{k}.
\end{multline}
We expand $G(|D|;z_{1},\ldots,z_{k})$ and make the following variable change $z_{j}=\frac{2v_{j}}{\log|D|}$ which provides us with
\begin{multline}
I(|D|,k;0)=\left(\frac{1}{2}\log|D|\right)^{k(k+1)/2}\\
\times\frac{(-1)^{k(k-1)/2}}{(2\pi i)^{k}}\frac{1}{k!}\oint\cdots\oint e^{\sum_{j=1}^{k}v_{j}}F(2v_{1}/\log|D|,\ldots,2v_{k}/\log|D|)\ \ \ \ \ \ \ \ \ \ \ \ \ \ \ \ \ \ \ \ \  \ \ \ \ \ \ \ \ \ \ \ \ \ \ \ \ \ \ \ \ \ \ \ \  \ \ \ \\
\times\prod_{1\leq i<j\leq k}f\left(\frac{2}{\log|D|}(v_{i}+v_{j})\right)\left(\frac{2}{\log|D|}(v_{i}+v_{j})\right)\prod_{j=1}^{k}f\left(\frac{2}{\log|D|}(2v_{j})\right)\left(\frac{2}{\log|D|}(2v_{j})\right)\ \ \ \ \ \ \ \ \ \ \ \ \ \ \ \ \ \ \ \ \  \ \ \ \\
\times\prod_{1\leq i<j\leq k}\frac{1}{v_{i}+v_{j}}\frac{\Delta(v_{1}^{2},\ldots,v_{k}^{2})^{2}}{\prod_{j=1}^{k}v_{j}^{2k}}dv_{1}\ldots dv_{k}.\ \ \ \ \ \ \ \ \ \ \ \ \ \ \ \ \ \ \ \ \ \ \ \ \ \ \ \ \ \ \ \ \ \ \ \ \ \ \ \ \ \ \ \ \ \ \ \ \ \ \ \ \ \ \ \ \ \ \ \ \ \ \ \ \ \ \ \ \ \ \ \ \ \ \ \ \ \ \ \ \ \ \ \ \ \ \ \ \ \ \ \ \  \ \ \
\end{multline}
Letting $g\rightarrow\infty$ (i.e. $|D|\rightarrow\infty$) we have,
\begin{multline}
I(|D|,k;0)\sim\left(\frac{1}{2}\log|D|\right)^{k(k+1)/2}F(0,\ldots,0)\\
\times \frac{(-1)^{k(k-1)/2}}{(2\pi i)^{k}}\frac{1}{k!}\oint\cdots\oint e^{\sum_{j=1}^{k}v_{j}}\prod_{1\leq i<j\leq k}\frac{1}{v_{i}+v_{j}}\frac{\Delta(v_{1}^{2},\ldots,v_{k}^{2})^{2}}{\prod_{j=1}^{k}v_{j}^{2k}}dv_{1}\ldots dv_{k}.
\end{multline}
Using equation (3.36) from \cite{CFKRS2}, Lemma 2.5.2 from \cite{CFKRS}, and the result from \cite{KeS2} for the moments at the symmetry point for the symplectic ensemble completes the proof of the lemma.  
\end{proof}

Now we are ready to establish Theorem \ref{conj:assintotico}. Using \eqref{eq:momentos}, which is a conjectural formula, with $\alpha_{1},\ldots,\alpha_{k}=0$ and the lemma above we have that

\begin{multline}
\sum_{D\in\mathcal{H}_{2g+1,q}}L(\tfrac{1}{2},\chi_{D})^{k}\sim\sum_{D\in\mathcal{H}_{2g+1,q}}\frac{1}{(\log q)^{k(k+1)/2}}\\
\times\left(\frac{1}{2}\log|D|\right)^{k(k+1)/2}A(\tfrac{1}{2};0,\ldots,0)2^{k(k+1/2)}\prod_{j=1}^{k}\frac{j!}{(2j)!}.
\end{multline}

So as $g\rightarrow\infty$ we have the formula given in the conjecture.

\section{Ratios Conjecture for $L$--functions over Function Fields}

In this section we will present a natural generalization of Conjecture \ref{thm:myconjecture}: we give a heuristic for all of the main terms in the quotient of products of $L$--functions over function fields averaged over a family of hyperelliptic curves. The family of curves that we consider is the same as that considered above: curves of the form $C_{D}:y^{2}=D(x)$, where $D(x)\in\mathcal{H}_{2g+1,q}$. Essentially the goal is to adjust the recipe presented by Conrey, Farmer and Zirnbauer \cite{CFZ} for the case of quadratic Dirichlet $L$--functions over function fields.

Recall that in section 2 we introduced our family of $L$--functions. In particular if 
\begin{equation}
\mathcal{H}_{2g+1,q}=\left\{D \ \mathrm{monic}, \ D \ \mathrm{square-free}, \ \mathrm{deg}(D)=2g+1, \ D\in\mathbb{F}_{q}[x]\right\}
\end{equation}
the family $\mathcal{D}=\left\{L(s,\chi_{D}):D\in\mathcal{H}_{2g+1,q}\right\}$ is a symplectic family. We can make a conjecture which is the function field analogue of conjecture 5.2 in \cite{CFZ} for
\begin{equation}
\sum_{D\in\mathcal{H}_{2g+1,q}}\frac{\prod_{k=1}^{K}L(\tfrac{1}{2}+\alpha_{k},\chi_{D})}{\prod_{m=1}^{Q}L(\tfrac{1}{2}+\gamma_{m},\chi_{D})}.
\end{equation}

The main difficulty will be to identify and factor out the appropriate zeta factors (arithmetic factors) as was done in the previous section. We follow the recipe given in \cite[Section 5]{CFZ} and we will adapt the recipe for the function field setting when necessary.

The $L$--functions in the numerator are written as

\begin{equation}
\label{eq:funceqII}
L(s,\chi_{D})=\sum_{\substack{n \ \mathrm{monic} \\ \mathrm{deg}(n)\leq g}}\frac{\chi_{D}(n)}{|n|^{s}}+\mathcal{X}_{D}(s)\sum_{\substack{n \ \mathrm{monic} \\ \mathrm{deg}(n)\leq g-1}}\frac{\chi_{D}(n)}{|n|^{1-s}}
\end{equation}
and those in denominator are expanded into series
\begin{equation}
\frac{1}{L(s,\chi_{D})}=\prod_{\substack{P \ \mathrm{monic} \\ \mathrm{irreducible}}}\left(1-\frac{\chi_{D}(P)}{|P|^{s}}\right)=\sum_{n \ \mathrm{monic}}\frac{\mu(n)\chi_{D}(n)}{|n|^{s}}
\end{equation}
with $\mu(n)$ and $\chi_{D}(n)$ defined in Section 2.

In the numerator we will again replace $L(s,\chi_{D})$ with $Z_{L}(s,\chi_{D})$ and so the quantity that we will apply the recipe to is

\begin{eqnarray}
\label{eq:eq6.6}
& & \sum_{D\in\mathcal{H}_{2g+1,q}}\frac{\prod_{k=1}^{K}Z_{L}(\tfrac{1}{2}+\alpha_{k},\chi_{D})}{\prod_{m=1}^{Q}L(\tfrac{1}{2}+\gamma_{m},\chi_{D})}\\
& = &\sum_{D\in\mathcal{H}_{2g+1,q}}Z_{L}(\tfrac{1}{2}+\alpha_{1},\chi_{D})\ldots Z_{L}(\tfrac{1}{2}+\alpha_{k},\chi_{D})\sum_{\substack{h_{1},\ldots,h_{Q} \\ h_{i} \ \mathrm{monic}}}\frac{\mu(h_{1})\ldots\mu(h_{Q})\chi_{D}(h_{1}\ldots h_{Q})}{\prod_{m=1}^{Q}|h_{m}|^{\tfrac{1}{2}+\gamma_{m}}}.\nonumber
\end{eqnarray}
We have that,
\begin{multline}\label{eq:6.3.7}
Z_{L}(\tfrac{1}{2}+\alpha_{1},\chi_{D})\ldots Z_{L}(\tfrac{1}{2}+\alpha_{k},\chi_{D})\\
=\sum_{\epsilon_{k}\in\{-1,1\}^{K}}\prod_{k=1}^{K}\mathcal{X}_{D}(\tfrac{1}{2}+\epsilon_{k}\alpha_{k})^{-1/2}\sum_{\substack{m_{1},\ldots,m_{K} \\ m_{j} \ \mathrm{monic}}}\frac{\chi_{D}(m_{1}\ldots m_{k})}{|m_{k}|^{\tfrac{1}{2}+\epsilon_{k}\alpha_{k}}},
\end{multline}
and so, \eqref{eq:eq6.6} becomes

\begin{equation}
\sum_{D\in\mathcal{H}_{2g+1,q}}\sum_{\epsilon_{k}\in\{-1,1\}^{K}}\prod_{k=1}^{K}\mathcal{X}_{D}(\tfrac{1}{2}+\epsilon_{k}\alpha_{k})^{-1/2}\sum_{\substack{m_{1},\ldots,m_{K} \\ h_{1},\ldots,h_{Q} \\ m_{j},h_{i} \ \mathrm{monic}}}\frac{\prod_{m=1}^{Q}\mu(h_{m})\chi_{D}(m_{1}\ldots m_{K})\chi_{D}(h_{1}\ldots h_{Q})}{\prod_{k=1}^{K}|m_{k}|^{\tfrac{1}{2}+\epsilon_{k}\alpha_{k}}\prod_{m=1}^{Q}|h_{m}|^{\tfrac{1}{2}+\gamma_{m}}}.
\end{equation}

Now, following the recipe we average the summand over fundamental discriminants $D\in\mathcal{H}_{2g+1,q}$

\begin{eqnarray}
& & \lim_{\mathrm{deg}(D)\rightarrow\infty}\sum_{\epsilon_{k}\in\{-1,1\}^{K}}\prod_{k=1}^{K}X_{D}(\tfrac{1}{2}+\epsilon_{k}\alpha_{k})^{-1/2}\sum_{\substack{m_{1},\ldots,m_{K} \\ h_{1},\ldots,h_{Q} \\ m_{j},h_{i} \ \mathrm{monic}}}\frac{\prod_{m=1}^{Q}\mu(h_{m})\langle\chi_{D}(\prod_{k=1}^{K}m_{k}\prod_{m=1}^{Q}h_{m})\rangle}{\prod_{k=1}^{K}|m_{k}|^{\tfrac{1}{2}+\epsilon_{k}\alpha_{k}}\prod_{m=1}^{Q}|h_{m}|^{\tfrac{1}{2}+\gamma_{m}}}\nonumber\\
& = & \sum_{\epsilon_{k}\in\{-1,1\}^{K}}\prod_{k=1}^{K}X_{D}(\tfrac{1}{2}+\epsilon_{k}\alpha_{k})^{-1/2}\sum_{\substack{m_{1},\ldots,m_{K} \\ h_{1},\ldots,h_{Q} \\ m_{j},h_{i} \ \mathrm{monic}}}\frac{\prod_{m=1}^{Q}\mu(h_{m})\delta\left(\prod_{k=1}^{K}m_{k}\prod_{m=1}^{Q}h_{m}\right)}{\prod_{k=1}^{K}|m_{k}|^{\tfrac{1}{2}+\epsilon_{k}\alpha_{k}}\prod_{m=1}^{Q}|h_{m}|^{\tfrac{1}{2}+\gamma_{m}}}\nonumber\\
\end{eqnarray}
where $\displaystyle{\delta(n)=\prod_{\substack{P \ \mathrm{monic} \\ P \ \mathrm{irreducible} \\ P\mid n}}\left(1+\frac{1}{|P|}\right)^{-1}} \text{if $n$ is a square and is $0$ otherwise.}$

So, using the same notation as in \cite{CFZ}

\begin{equation}
G_{\mathcal{D}}(\alpha;\gamma)=\sum_{\substack{m_{1},\ldots,m_{K} \\ h_{1},\ldots,h_{Q} \\ m_{j},h_{i} \ \mathrm{monic}}}\frac{\prod_{m=1}^{Q}\mu(h_{m})\delta\left(\prod_{k=1}^{K}m_{k}\prod_{m=1}^{Q}h_{m}\right)}{\prod_{k=1}^{K}|m_{k}|^{\tfrac{1}{2}+\alpha_{k}}\prod_{m=1}^{Q}|h_{m}|^{\tfrac{1}{2}+\gamma_{m}}}.
\end{equation}
We can express $G_{\mathcal{D}}(\alpha;\gamma)$ as a convergent Euler product provided that $\mathfrak{R}(\alpha_{k})>0$ and $\mathfrak{R}(\gamma_{m})>0$. Thus,

\begin{equation}
G_{\mathcal{D}}(\alpha;\gamma)=\prod_{\substack{P \ \mathrm{monic} \\ \mathrm{irreducible}}}\left(1+\left(1+\frac{1}{|P|}\right)^{-1}\sum_{0<\sum_{k}a_{k}+\sum_{m}c_{m} \ \mathrm{is \ even}}\frac{\prod_{m=1}^{Q}\mu(P^{c_{m}})}{|P|^{\sum_{k}a_{k}\left(\tfrac{1}{2}+\alpha_{k}\right)+\sum_{m}c_{m}\left(\tfrac{1}{2}+\gamma_{m}\right)}}\right).
\end{equation}

The above expression will enable us to locate the zeros and poles. We obtain

\begin{multline}
\label{eq:6.3.12}
G_{\mathcal{D}}(\alpha;\gamma)=\prod_{\substack{P \ \mathrm{monic} \\ \mathrm{irreducible}}}\Bigg(1+\left(1+\frac{1}{|P|}\right)^{-1}\Bigg[\sum_{\substack{j,k \\ j<k}}\frac{1}{|P|^{\left(\tfrac{1}{2}+\alpha_{j}\right)+\left(\tfrac{1}{2}+\alpha_{k}\right)}}+\sum_{k}\frac{1}{|P|^{1+2\alpha_{k}}}+\\ 
\ \ \ \ \ \ \ \ \ \ \ \ \ \ \ \ \ \ +\sum_{\substack{m<r \\ m,r}}\frac{\mu(P)^{2}}{|P|^{\left(\tfrac{1}{2}+\gamma_{m}\right)+\left(\tfrac{1}{2}+\gamma_{r}\right)}}+\sum_{k}\sum_{m}\frac{\mu(P)}{|P|^{\left(\tfrac{1}{2}+\alpha_{k}\right)+\left(\tfrac{1}{2}+\gamma_{m}\right)}}+\cdots\Bigg]\Bigg),
\end{multline}
where $\cdots$ indicates terms that converge. Remembering that,
\begin{equation}
\zeta_{A}(s)=\prod_{\substack{P \ \mathrm{monic} \\ \mathrm{irreducible}}}\left(1-\frac{1}{|P|^{s}}\right)^{-1}
\end{equation}
and using that
\begin{equation}
\left(1-\frac{1}{|P|^{s}}\right)^{-1}=\sum_{j=0}^{\infty}\left(\frac{1}{|P|^{s}}\right)^{j},
\end{equation}
we have that the terms in \eqref{eq:6.3.12} with $\sum_{k=1}^{K}a_{k}+\sum_{m=1}^{Q}c_{m}=2$ contribute to the zeros and poles. The poles come from terms with $a_{j}=a_{k}=1$, $1\leq j<k\leq K$, and from terms $a_{k}=2$, $1\leq k\leq K$. In addition, there are poles coming from terms with $c_{m}=c_{r}=1$, $1\leq m<r\leq Q$. 

We also note that poles do not arise from terms with $c_{m}=2$ since $\mu(P^{2})=0$. The contribution of zeros arises from terms with $a_{k}=1=c_{m}$ with $1\leq k\leq K$ and $1\leq m\leq Q$. After all this analysis, the contribution, expressed in terms of $\zeta_{A}(s)$, of all these zeros and poles is
\begin{equation}
Y(\alpha;\gamma):=\frac{\prod_{j\leq k\leq K}\zeta_{A}(1+\alpha_{j}+\alpha_{k})\prod_{m<r\leq Q}\zeta_{A}(1+\gamma_{m}+\gamma_{r})}{\prod_{k=1}^{K}\prod_{m=1}^{Q}\zeta_{A}(1+\alpha_{k}+\gamma_{m})}.
\end{equation}

So, when we factor $Y$ out from $G_{\mathcal{D}}$ we are left with the Euler product $A_{\mathcal{D}}$ which is absolutely convergent for all of the variables in small disks around $0$:

\begin{multline}
A_{\mathcal{D}}(\alpha;\gamma)=\prod_{\substack{P \ \mathrm{monic} \\ \mathrm{irreducible}}}\frac{\prod_{j\leq k\leq K}\left(1-\frac{1}{|P|^{1+\alpha_{j}+\alpha_{k}}}\right)\prod_{m<r\leq Q}\left(1-\frac{1}{|P|^{1+\gamma_{m}+\gamma_{r}}}\right)}{\prod_{k=1}^{K}\prod_{m=1}^{Q}\left(1-\frac{1}{|P|^{1+\alpha_{k}+\gamma_{m}}}\right)}\\
\times\left(1+\left(1+\frac{1}{|P|}\right)^{-1}\sum_{0<\sum_{k}a_{k}+\sum_{m}c_{m} \ \mathrm{is \ even}}\frac{\prod_{m=1}^{Q}\mu(P^{c_{m}})}{|P|^{\sum_{k}a_{k}\left(\tfrac{1}{2}+\alpha_{k}\right)+\sum_{m}c_{m}\left(\tfrac{1}{2}+\gamma_{m}\right)}}\right).
\end{multline}

So we can conclude that,
\begin{eqnarray}
& & \sum_{D\in\mathcal{H}_{2g+1,q}}\frac{\prod_{k=1}^{K}Z_{L}(\tfrac{1}{2}+\alpha_{k},\chi_{D})}{\prod_{m=1}^{Q}L(\tfrac{1}{2}+\gamma_{m},\chi_{D})}\nonumber\\
& = &\sum_{D\in\mathcal{H}_{2g+1,q}}\sum_{\epsilon\in\{-1,1\}^{K}}\prod_{k=1}^{K}\mathcal{X}_{D}(\tfrac{1}{2}+\epsilon_{k}\alpha_{k})^{-1/2}Y(\epsilon_{1}\alpha_{1},\ldots,\epsilon_{K}\alpha_{K};\gamma)A_{\mathcal{D}}(\epsilon_{1}\alpha_{1},\ldots,\epsilon_{K}\alpha_{K};\gamma)\nonumber\\
& & +o(|D|),
\end{eqnarray}
using \eqref{eq:Z-function} we have that,

\begin{multline}
\sum_{D\in\mathcal{H}_{2g+1,q}}\frac{\prod_{k=1}^{K}L(\tfrac{1}{2}+\alpha_{k},\chi_{D})}{\prod_{m=1}^{Q}L(\tfrac{1}{2}+\gamma_{m},\chi_{D})}\\
=\sum_{D\in\mathcal{H}_{2g+1,q}}\sum_{\epsilon\in\{-1,1\}^{K}}\prod_{k=1}^{K}\mathcal{X}_{D}(\tfrac{1}{2}+\epsilon_{k}\alpha_{k})^{-1/2}\prod_{k=1}^{K}\mathcal{X}_{D}(\tfrac{1}{2}+\alpha_{k})^{1/2}\\
\times Y(\epsilon_{1}\alpha_{1},\ldots,\epsilon_{K}\alpha_{K};\gamma)A_{\mathcal{D}}(\epsilon_{1}\alpha_{1},\ldots,\epsilon_{K}\alpha_{K};\gamma)+o(|D|),
\end{multline}
moreover,
\begin{equation}
\mathcal{X}_{D}(\tfrac{1}{2}+\epsilon_{k}\alpha_{k})^{-1/2}=|D|^{\tfrac{1}{2}\epsilon_{k}\alpha_{k}}X(\tfrac{1}{2}+\epsilon_{k}\alpha_{k})^{-1/2}
\end{equation}
and
\begin{equation}
\mathcal{X}_{D}(\tfrac{1}{2}+\alpha_{k})^{1/2}=|D|^{-\tfrac{1}{2}\alpha_{k}}X(\tfrac{1}{2}+\alpha_{k})^{1/2},
\end{equation}
and so

\begin{multline}
\prod_{k=1}^{K}\mathcal{X}_{D}(\tfrac{1}{2}+\epsilon_{k}\alpha_{k})^{-1/2}\mathcal{X}_{D}(\tfrac{1}{2}+\alpha_{k})^{1/2}\\
=\prod_{k=1}^{K}|D|^{\tfrac{1}{2}(\epsilon_{k}\alpha_{k}-\alpha_{k})}\prod_{k=1}^{K}X(\tfrac{1}{2}+\epsilon_{k}\alpha_{k})^{-1/2}X(\tfrac{1}{2}+\alpha_{k})^{1/2}\ \ \ \\
=|D|^{\tfrac{1}{2}\sum_{k=1}^{K}(\epsilon_{k}\alpha_{k}-\alpha_{k})}\prod_{k=1}^{K}X(\tfrac{1}{2}+\epsilon_{k}\alpha_{k})^{-1/2}X(\tfrac{1}{2}+\alpha_{k})^{1/2}.
\end{multline}

To put our conjecture in the same form as conjecture 5.2 in \cite{CFZ} and see clearly the analogies between the conjectures for the classical quadratic $L$--functions and the $L$--functions over function fields, we need first to establish the following simple lemma:

\begin{lem}
We have that,
\begin{equation}
X\left(\frac{1}{2}+\epsilon_{k}\alpha_{k}\right)^{-1/2}X\left(\frac{1}{2}+\alpha_{k}\right)^{1/2}=X\left(\frac{1}{2}+\frac{\alpha_{k}-\epsilon_{k}\alpha_{k}}{2}\right).
\end{equation}
\end{lem}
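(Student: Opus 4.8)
The plan is to exploit the fact that in the function field setting the factor $X(s)$ is not a ratio of gamma functions but simply a pure power of $q$. Recalling the definition $X(s)=q^{-1/2+s}$ from Conjecture \ref{thm:myconjecture}, the first and essentially only observation I would make explicit is the identity
\begin{equation}
X\left(\tfrac{1}{2}+w\right)=q^{w}\qquad\text{for every }w\in\mathbb{C},
\end{equation}
which follows by direct substitution into the definition. This is what makes the lemma almost automatic: unlike in the number field analogue (Conjecture \ref{conj:conj4}), where the corresponding manipulation involves the functional-equation factor $g_{+}$ and genuine gamma-function identities, here everything reduces to the additivity of exponents for the exponential map $w\mapsto q^{w}$.

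Having established that, I would simply evaluate the left-hand side. Applying the displayed identity with $w=\epsilon_{k}\alpha_{k}$ and with $w=\alpha_{k}$ gives
\begin{equation}
X\left(\tfrac{1}{2}+\epsilon_{k}\alpha_{k}\right)^{-1/2}X\left(\tfrac{1}{2}+\alpha_{k}\right)^{1/2}
=\left(q^{\epsilon_{k}\alpha_{k}}\right)^{-1/2}\left(q^{\alpha_{k}}\right)^{1/2}
=q^{-\tfrac{1}{2}\epsilon_{k}\alpha_{k}}\,q^{\tfrac{1}{2}\alpha_{k}}
=q^{\tfrac{1}{2}(\alpha_{k}-\epsilon_{k}\alpha_{k})}.
\end{equation}
For the right-hand side I would apply the same identity once more, this time with $w=\tfrac{1}{2}(\alpha_{k}-\epsilon_{k}\alpha_{k})$, to obtain $X\big(\tfrac{1}{2}+\tfrac{1}{2}(\alpha_{k}-\epsilon_{k}\alpha_{k})\big)=q^{\tfrac{1}{2}(\alpha_{k}-\epsilon_{k}\alpha_{k})}$. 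Comparing the two expressions finishes the proof.

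There is no real obstacle here; the content is a one-line computation with the law of exponents. The only point that deserves a word of care is the meaning of the half-powers $(\cdot)^{1/2}$: since $q>1$ is a fixed positive real, one should read $q^{w}:=e^{w\log q}$ with the ordinary real logarithm $\log q$, so that $q^{w}$ is an entire, non-vanishing function of $w$ and the rule $(q^{w})^{1/2}=q^{w/2}$ holds with no branch ambiguity. With that convention fixed at the outset, the two chains of equalities above are unconditional, and I would present the whole argument as a short direct calculation rather than splitting it into separate steps.
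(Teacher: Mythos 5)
Your proof is correct and is essentially the paper's own argument: the paper disposes of this lemma with the single remark that it ``follows directly from $X(s)=q^{-1/2+s}$,'' which is exactly the computation you carry out, namely $X(\tfrac12+w)=q^{w}$ and the law of exponents. Your extra remark about fixing the real branch $q^{w}=e^{w\log q}$ so that $(q^{w})^{1/2}=q^{w/2}$ is unambiguous is a reasonable point of care but does not change the substance.
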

\begin{proof}
Follows directly from the $X(s)=q^{-1/2+s}$.
\end{proof}

If the real parts of $\alpha_{k}$ and $\gamma_{q}$ are positive we are led to
\begin{multline}
\label{eq:6.3.20}
\sum_{D\in\mathcal{H}_{2g+1,q}}\frac{\prod_{k=1}^{K}L(\tfrac{1}{2}+\alpha_{k},\chi_{D})}{\prod_{m=1}^{Q}L(\tfrac{1}{2}+\gamma_{m},\chi_{D})}\\
=\sum_{D\in\mathcal{H}_{2g+1,q}}\sum_{\epsilon\in\{-1,1\}^{K}}|D|^{\tfrac{1}{2}\sum_{k=1}^{K}(\epsilon_{k}\alpha_{k}-\alpha_{k})}\prod_{k=1}^{K}X\left(\frac{1}{2}+\frac{\alpha_{k}-\epsilon_{k}\alpha_{k}}{2}\right)\\
\times Y(\epsilon_{1}\alpha_{1},\ldots,\epsilon_{K}\alpha_{K};\gamma)A_{\mathcal{D}}(\epsilon_{1}\alpha_{1},\ldots,\epsilon_{K}\alpha_{K};\gamma)+o(|D|).
\end{multline}
If we let,
\begin{equation}
H_{\mathcal{D},|D|,\alpha,\gamma}(w)=|D|^{\tfrac{1}{2}\sum_{k=1}^{K}w_{k}}\prod_{k=1}^{K}X\left(\frac{1}{2}+\frac{\alpha_{k}-w_{k}}{2}\right)Y(w_{1},\ldots,w_{K};\gamma)A_{\mathcal{D}}(w_{1},\ldots,w_{K};\gamma)\\
\end{equation}
then the conjecture may be formulated as
\begin{multline}
\label{eq:6.3.21}
\sum_{D\in\mathcal{H}_{2g+1,q}}\frac{\prod_{k=1}^{K}L(\tfrac{1}{2}+\alpha_{k},\chi_{D})}{\prod_{m=1}^{Q}L(\tfrac{1}{2}+\gamma_{m},\chi_{D})}\\
=\sum_{D\in\mathcal{H}_{2g+1,q}}|D|^{-\tfrac{1}{2}\sum_{k=1}^{K}\alpha_{k}}\sum_{\epsilon\in\{-1,1\}^{K}}H_{\mathcal{D},|D|,\alpha,\gamma}(\epsilon_{1}\alpha_{1},\ldots,\epsilon_{K}\alpha_{K};\gamma)+o(|D|),
\end{multline}
which are precisely the formulae given in Conjecture \ref{conj:ratiosconj}.

\begin{rmk}
Note that the formulas \eqref{eq:6.3.20} and \eqref{eq:6.3.21} can be seen as the function field analogues of the formulae (5.27) and (5.29) in \cite{CFZ}.
\end{rmk}

\subsection{Refinements of the Conjecture}

In this section we refine the ratios conjecture first by deriving a closed form expression for the Euler product $A_{\mathcal{D}}(\alpha;\gamma)$, and second by expressing the combinatorial sum as a multiple integral. This is similar to the treatment given in the previous section.

\subsubsection{Closed form expression for $A_{D}$}

Suppose that $\displaystyle{f(x)=1+\sum_{n=1}^{\infty}u_{n}x^{n}}. \ \text{Then}$

\begin{equation}
\sum_{0<n \ \mathrm{is \ even}}u_{n}x^{n}=\frac{1}{2}(f(x)+f(-x)-2)
\end{equation}
and so,
\begin{multline}
\label{eq:6.4.2}
1+\left(1+\frac{1}{|P|}\right)^{-1}\sum_{0<n \ \mathrm{is \ even}}u_{n}x^{n}=1+\left(1+\frac{1}{|P|}\right)^{-1}\left(\frac{1}{2}(f(x)+f(-x)-2)\right)\\
=\frac{1}{1+\frac{1}{|P|}}\left(\frac{f(x)+f(-x)}{2}+\frac{1}{|P|}\right).
\end{multline}

Now, let
\begin{eqnarray}
f\left(\frac{1}{|P|}\right)& = & \sum_{a_{k},c_{m}}\frac{\prod_{m=1}^{Q}\mu(P^{c_{m}})}{|P|^{\sum_{k}a_{k}\left(\tfrac{1}{2}+\alpha_{k}\right)+\sum_{m}c_{m}\left(\tfrac{1}{2}+\gamma_{m}\right)}}\nonumber\\
& = & \sum_{a_{k}}\frac{1}{|P|^{\sum_{k}a_{k}\left(\tfrac{1}{2}+\alpha_{k}\right)}}\sum_{c_{m}}\frac{\prod_{m=1}^{Q}\mu(P^{c_{m}})}{|P|^{\sum_{m}c_{m}\left(\tfrac{1}{2}+\gamma_{m}\right)}}\nonumber\\
& = & \sum_{a_{k}}\prod_{k=1}^{K}\frac{1}{|P|^{a_{k}\left(\tfrac{1}{2}+\alpha_{k}\right)}}\sum_{c_{m}}\prod_{m=1}^{Q}\frac{\mu(P^{c_{m}})}{|P|^{c_{m}\left(\tfrac{1}{2}+\gamma_{m}\right)}}\nonumber\\
& = &
\label{eq:6.4.3} \frac{\prod_{m=1}^{Q}\left(1-\frac{1}{|P|^{1/2+\gamma_{m}}}\right)}{\prod_{k=1}^{K}\left(1-\frac{1}{|P|^{1/2+\alpha_{k}}}\right)}.
\end{eqnarray}

We are ready to prove the following lemma:

\begin{lem}
We have that,
\begin{multline}
1+\left(1+\frac{1}{|P|}\right)^{-1}\sum_{0<\sum_{k}a_{k}+\sum_{m}c_{m} \ \mathrm{is \ even}}\frac{\prod_{m=1}^{Q}\mu(P^{c_{m}})}{|P|^{\sum_{k}a_{k}\left(\tfrac{1}{2}+\alpha_{k}\right)+\sum_{m}c_{m}\left(\tfrac{1}{2}+\gamma_{m}\right)}}\\
= \frac{1}{1+\frac{1}{|P|}}\left(\frac{1}{2}\frac{\prod_{m=1}^{Q}\left(1-\frac{1}{|P|^{1/2+\gamma_{m}}}\right)}{\prod_{k=1}^{K}\left(1-\frac{1}{|P|^{1/2+\alpha_{k}}}\right)}+\frac{1}{2}\frac{\prod_{m=1}^{Q}\left(1+\frac{1}{|P|^{1/2+\gamma_{m}}}\right)}{\prod_{k=1}^{K}\left(1+\frac{1}{|P|^{1/2+\alpha_{k}}}\right)}+\frac{1}{|P|}\right).
\end{multline}
\end{lem}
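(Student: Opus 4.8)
The plan is to recognise the inner constrained sum as the \emph{even part} of the generating series whose total value was already computed in \eqref{eq:6.4.3}, and then to feed it into the elementary identity \eqref{eq:6.4.2}. Writing $n=\sum_{k}a_{k}+\sum_{m}c_{m}$, I would regard the full unconstrained sum
\[
f=\sum_{a_{k},c_{m}\geq0}\frac{\prod_{m=1}^{Q}\mu(P^{c_{m}})}{|P|^{\sum_{k}a_{k}(\frac12+\alpha_{k})+\sum_{m}c_{m}(\frac12+\gamma_{m})}}
\]
as the series $1+\sum_{n\geq1}u_{n}x^{n}$ appearing in \eqref{eq:6.4.2}, where the bookkeeping variable $x$ records the total index $n$; since $\sum_{k}a_{k}(\frac12+\alpha_{k})+\sum_{m}c_{m}(\frac12+\gamma_{m})=\frac n2+\sum_{k}a_{k}\alpha_{k}+\sum_{m}c_{m}\gamma_{m}$, each unit of $n$ carries the half weight $|P|^{-1/2}$. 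Because $\mu(P^{c})=0$ for $c\geq2$ the sums over each $c_{m}$ truncate to $c_{m}\in\{0,1\}$ and, with $\mathfrak{R}(\alpha_{k}),\mathfrak{R}(\gamma_{m})>0$, the geometric sums over each $a_{k}$ converge, yielding exactly the closed form \eqref{eq:6.4.3}, namely $f=\prod_{m}(1-|P|^{-1/2-\gamma_{m}})\big/\prod_{k}(1-|P|^{-1/2-\alpha_{k}})$.

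Next I would compute the sign-flipped series $f(-x)$. Replacing $x$ by $-x$ weights each term by $(-1)^{n}$, and since the only dependence on $n$ sits in the half powers $|P|^{-1/2}$, this is the same as negating every factor $|P|^{-1/2-\alpha_{k}}$ and $|P|^{-1/2-\gamma_{m}}$ in the closed form; hence $f(-x)=\prod_{m}(1+|P|^{-1/2-\gamma_{m}})\big/\prod_{k}(1+|P|^{-1/2-\alpha_{k}})$. The sum in the statement is precisely $\sum_{0<n\ \mathrm{even}}u_{n}x^{n}=\tfrac12(f+f(-x)-2)$, so substituting the two closed forms into the right-hand side of \eqref{eq:6.4.2} produces the asserted formula directly, with the prefactor $(1+\frac{1}{|P|})^{-1}$ and the additive $\frac{1}{|P|}$ already supplied by \eqref{eq:6.4.2}.

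The one step that needs care is the sign bookkeeping in passing from $f$ to $f(-x)$: one must verify that the parity weight $(-1)^{\sum_{k}a_{k}+\sum_{m}c_{m}}$ is reproduced exactly by negating the half-integer prime powers $|P|^{-1/2-\alpha_{k}}$ and $|P|^{-1/2-\gamma_{m}}$, so that both numerator and denominator pick up plus signs, rather than by negating $|P|^{-\alpha_{k}}$ or $|P|^{-\gamma_{m}}$ alone. Once this parity correspondence is pinned down, everything else is the routine algebra already recorded in \eqref{eq:6.4.2} and \eqref{eq:6.4.3}, and no further estimates are required.
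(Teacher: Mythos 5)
Your proof is correct and is essentially the paper's own argument: the paper establishes the lemma by precisely this combination of \eqref{eq:6.4.2} and \eqref{eq:6.4.3}, stating only that it ``follows directly.'' Your explicit check of the sign bookkeeping --- that the parity weight $(-1)^{\sum_{k}a_{k}+\sum_{m}c_{m}}$ amounts to negating each factor $|P|^{-1/2-\alpha_{k}}$ and $|P|^{-1/2-\gamma_{m}}$, so that $\mu(P)=-1$ turns the numerator factors into $\bigl(1+|P|^{-1/2-\gamma_{m}}\bigr)$ while the geometric series in the denominator become $\bigl(1+|P|^{-1/2-\alpha_{k}}\bigr)^{-1}$ --- is exactly the one detail the paper leaves implicit.
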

\begin{proof}
The proof follows directly using \eqref{eq:6.4.2} and \eqref{eq:6.4.3}.
\end{proof}

We have the following corollary to this lemma

\begin{cor}
\begin{multline}
A_{\mathcal{D}}(\alpha;\gamma)=\prod_{\substack{P \ \mathrm{monic} \\ \mathrm{irreducible}}}\frac{\prod_{j\leq k\leq K}\left(1-\frac{1}{|P|^{1+\alpha_{j}+\alpha_{k}}}\right)\prod_{m<r\leq Q}\left(1-\frac{1}{|P|^{1+\gamma_{m}+\gamma_{r}}}\right)}{\prod_{k=1}^{K}\prod_{m=1}^{Q}\left(1-\frac{1}{|P|^{1+\alpha_{k}+\gamma_{m}}}\right)}\\
\times\frac{1}{1+\frac{1}{|P|}}\left(\frac{1}{2}\frac{\prod_{m=1}^{Q}\left(1-\frac{1}{|P|^{1/2+\gamma_{m}}}\right)}{\prod_{k=1}^{K}\left(1-\frac{1}{|P|^{1/2+\alpha_{k}}}\right)}+\frac{1}{2}\frac{\prod_{m=1}^{Q}\left(1+\frac{1}{|P|^{1/2+\gamma_{m}}}\right)}{\prod_{k=1}^{K}\left(1+\frac{1}{|P|^{1/2+\alpha_{k}}}\right)}+\frac{1}{|P|}\right).
\end{multline}
\end{cor}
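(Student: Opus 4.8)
The plan is to obtain the Corollary by a single substitution into the Euler-product definition of $A_{\mathcal{D}}(\alpha;\gamma)$ recorded just above the preceding Lemma. That definition presents the local factor at each monic irreducible $P$ as the rational ``zeta-removing'' prefactor
\[
\frac{\prod_{j\leq k\leq K}\left(1-\frac{1}{|P|^{1+\alpha_{j}+\alpha_{k}}}\right)\prod_{m<r\leq Q}\left(1-\frac{1}{|P|^{1+\gamma_{m}+\gamma_{r}}}\right)}{\prod_{k=1}^{K}\prod_{m=1}^{Q}\left(1-\frac{1}{|P|^{1+\alpha_{k}+\gamma_{m}}}\right)}
\]
multiplied by the combinatorial factor
\[
1+\left(1+\frac{1}{|P|}\right)^{-1}\sum_{0<\sum_{k}a_{k}+\sum_{m}c_{m} \ \mathrm{is \ even}}\frac{\prod_{m=1}^{Q}\mu(P^{c_{m}})}{|P|^{\sum_{k}a_{k}\left(\tfrac{1}{2}+\alpha_{k}\right)+\sum_{m}c_{m}\left(\tfrac{1}{2}+\gamma_{m}\right)}}.
\]
First I would observe that this second factor is \emph{verbatim} the left-hand side of the preceding Lemma, whose closed-form evaluation was built from the computation of $f(1/|P|)$ in \eqref{eq:6.4.3} — where the $c_m$-sum collapses because $\mu(P^{c})=0$ for $c\geq 2$ — together with the even-part extraction $\sum_{0<n\ \mathrm{even}}u_{n}x^{n}=\tfrac12(f(x)+f(-x)-2)$ and the rearrangement \eqref{eq:6.4.2}.

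The key step is then merely to invoke the Lemma, replacing this combinatorial factor in every local factor by the closed form it supplies; since the rational prefactor is carried through unchanged, performing the replacement $P$ by $P$ across the Euler product produces the asserted expression with no further rearrangement. There is essentially no obstacle: the Corollary is a pure restatement once the Lemma is in hand, all the genuine content residing in the Lemma and the generating-function identities it rests upon. The only point deserving a word of justification is that the term-by-term substitution respects the infinite product, and this is legitimate because $A_{\mathcal{D}}(\alpha;\gamma)$ was already shown to be absolutely convergent for all the $\alpha_k,\gamma_m$ in small disks about the origin, so the local factors may be manipulated individually without disturbing convergence of the product.
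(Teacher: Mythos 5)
Your proposal is correct and coincides with the paper's own treatment: the Corollary is stated there as an immediate consequence of the preceding Lemma, obtained exactly as you describe, by substituting the Lemma's closed-form evaluation for the combinatorial factor in each local factor of the Euler-product definition of $A_{\mathcal{D}}(\alpha;\gamma)$, with the rational prefactor carried through unchanged. Your additional remark on absolute convergence justifying the factor-by-factor substitution is a harmless (and sensible) elaboration of what the paper leaves implicit.
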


\subsection{The Final Form of the Ratios Conjecture}

We begin this subsection by quoting the following lemma from \cite{CFZ}.

\begin{lem}
Suppose that
$F(z )=F(z_1,\dots,z_K )$
is a function of $K $ variables,
which is symmetric and regular near $(0,\dots,0)$.
Suppose further that $f(s)$ has a simple pole
of residue~$1$ at $s=0$
but is otherwise analytic in $|s|\le 1$.
Let
either
\begin{equation}
H(z_1,\dots,z_K )=
F(z_1,\dots,z_K)
\prod_{1\le j\le k\le K}
f(z_j+z_k)\end{equation}
or
\begin{equation}
H(z_1,\dots,z_K )=
F(z_1,\dots,z_K)
\prod_{1\le j< k\le K}
f(z_j+z_k).\end{equation}
If $|\alpha_k|<1$ then
\begin{multline}
\sum_{\epsilon \in \{-1,+1\}^K}H(\epsilon_1 \alpha_1,\dots,\epsilon_K \alpha_K)\\
=\frac{(-1)^{K(K-1)/2}2^K}{K!  (2\pi i)^{K }}
\int\limits_{|z_i|=1}\frac{H(z_1, \dots,z_K )
\Delta(z_1^2,\dots,z_{K }^2)^2\prod_{k=1}^K z_k}
{\prod_{j=1}^{K }\prod_{k=1}^{K }
(z_k-\alpha_j)(z_k+\alpha_j)}
\,dz_1\dots dz_{K }
\end{multline}
and
\begin{multline}
\sum_{\epsilon \in \{-1,+1\}^K}\mbox{sgn}(\epsilon)
H(\epsilon_1 \alpha_1,\dots,\epsilon_K \alpha_K)\\ 
=\frac{(-1)^{K(K-1)/2}2^K}{K!  (2\pi i)^{K }}
\int\limits_{|z_i|=1}\frac{H(z_1, \dots,z_K )
\Delta(z_1^2,\dots,z_{K }^2)^2\prod_{k=1}^K \alpha_k}
{\prod_{j=1}^{K }\prod_{k=1}^{K }
(z_k-\alpha_j)(z_k+\alpha_j)}
\,dz_1\dots dz_{K }.
\end{multline}
\end{lem}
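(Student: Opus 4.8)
The plan is to prove both identities by the calculus of residues, recognising the statement as the symplectic symmetrisation identity already recorded as Lemma~\ref{thm:concisesumsymplectic}, now with the integration contour pinned to circles of a common radius. First I would fix each contour to be a circle $|z_k|=r$ of a common radius $r$ chosen so that all the points $\pm\alpha_j$ lie strictly inside (possible since $|\alpha_j|<1$) and so that $z_i+z_j$ stays within the region where $f$ is analytic, apart from the single value $z_i+z_j=0$. Since the integrand will turn out to have no singularities between this radius and radius $1$, the precise radius is immaterial and one may write $|z_i|=1$ in the final statement.

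The key structural observation, and the step I expect to require the most care, is that the integrand
\begin{equation}
\Phi(z_1,\dots,z_K)=\frac{H(z_1,\dots,z_K)\,\Delta(z_1^2,\dots,z_K^2)^2\,\prod_{k=1}^K z_k}{\prod_{j=1}^K\prod_{k=1}^K (z_k-\alpha_j)(z_k+\alpha_j)}
\end{equation}
is holomorphic inside the polydisc except for the prescribed poles at $z_k=\pm\alpha_j$. Writing $\Delta(z_1^2,\dots,z_K^2)^2=\prod_{i<j}(z_j-z_i)^2(z_j+z_i)^2$, the double zero supplied by $(z_i+z_j)^2$ cancels the simple pole of the factor $f(z_i+z_j)$ along $z_i=-z_j$, while in the $\prod_{1\le j\le k\le K}$ case the factor $\prod_k z_k$ cancels the simple pole of the diagonal factor $f(2z_k)$ at $z_k=0$. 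Thus the apparent poles carried by $H$ are spurious, and in each variable the only enclosed poles of $\Phi$ are exactly $\{\pm\alpha_1,\dots,\pm\alpha_K\}$.

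Granting this, the finish is routine bookkeeping. Evaluating the iterated integral by summing residues produces a sum over assignments $z_k=\eta_k$ with $\eta_k\in\{\pm\alpha_1,\dots,\pm\alpha_K\}$; any assignment placing two variables on values of equal square makes $\Delta(z_1^2,\dots,z_K^2)^2$ vanish, so only the assignments $z_k=\epsilon_k\alpha_{\sigma(k)}$ with $\sigma\in S_K$ and $\epsilon\in\{-1,1\}^K$ survive. Because $F$ is symmetric and each of $\prod f$, $\Delta^2$, $\prod_k z_k$ and the denominator is symmetric in the $z_k$, all $K!$ permutations $\sigma$ contribute equally and cancel the factor $1/K!$; it then remains to compute, for each sign vector $\epsilon$, the residue at $z_k=\epsilon_k\alpha_k$. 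Stripping the factors $(z_k-\epsilon_k\alpha_k)$ and evaluating the remainder reproduces $H(\epsilon_1\alpha_1,\dots,\epsilon_K\alpha_K)$, with $(-1)^{K(K-1)/2}$ coming from reordering the Vandermonde factors and $2^K$ compensating the $\tfrac12$ produced by each single-variable residue of $z_k/\big((z_k-\alpha_j)(z_k+\alpha_j)\big)$; summing over $\epsilon$ gives the first identity. The second is identical except that the numerator $\prod_k z_k$ is replaced by $\prod_k\alpha_k$, which at $z_k=\epsilon_k\alpha_k$ contributes the extra sign $\prod_k\epsilon_k=\mathrm{sgn}(\epsilon)$. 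Equivalently, once the pole-cancellation step is secured one may contract the contours to small loops about the $\pm\alpha_j$ without crossing any singularity and invoke Lemma~\ref{thm:concisesumsymplectic} directly; either way the entire content of the lemma is the cancellation of the $f$-poles by the squared Vandermonde and the $\prod_k z_k$ factor.
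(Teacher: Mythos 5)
There is no internal proof to compare against: the paper quotes this lemma verbatim from \cite{CFZ} (just as it quotes Lemma \ref{thm:concisesumsymplectic} from \cite{CFKRS}), so your argument must be judged on its own merits. In substance it is correct, and it is the standard proof. The squared Vandermonde $\Delta(z_1^2,\dots,z_K^2)^2=\prod_{i<j}(z_j-z_i)^2(z_j+z_i)^2$ cancels the simple poles of $f(z_i+z_j)$ along $z_i+z_j=0$, and $\prod_k z_k$ cancels those of the diagonal factors $f(2z_k)$ in the $j\le k$ case; the vanishing of the Vandermonde on coinciding squares reduces the iterated residues to the $K!\,2^K$ assignments $z_k=\epsilon_k\alpha_{\sigma(k)}$ with $\sigma$ a permutation; symmetry of $H$ collapses the sum over $\sigma$ against $1/K!$; and the constants check: each variable contributes a residue factor $\tfrac12$, cancelled by $2^K$, while $\prod_{i\ne j}(\alpha_i^2-\alpha_j^2)=(-1)^{K(K-1)/2}\prod_{i<j}(\alpha_j^2-\alpha_i^2)^2$ supplies the sign, and replacing $\prod_k z_k$ by $\prod_k\alpha_k$ multiplies each residue by $\prod_k\epsilon_k=\mathrm{sgn}(\epsilon)$, which is exactly the second identity.

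The one step that is not justified by the stated hypotheses is your contour management. You need a common radius $r>\max_j|\alpha_j|$, and on such circles $|z_i+z_j|$ can be as large as $2r$; but $f$ is only assumed analytic in $|s|\le 1$ (and $F$ only regular near the origin), so when $\max_j|\alpha_j|>\tfrac12$ your claim that the integrand has ``no singularities between this radius and radius $1$'' does not follow from the hypotheses — on $|z_i|=1$ the integrand need not even be defined. This defect is inherited from the statement as quoted from \cite{CFZ}, not introduced by you; the careful formulation is the one in Lemma \ref{thm:concisesumsymplectic}, where the contours are only required to enclose the $\pm\alpha_j$ and the relevant sums $\alpha_i+\alpha_j$ must lie in the region of analyticity of $f$. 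Your closing alternative — shrink to small loops about the $\pm\alpha_j$ and invoke that lemma — is therefore the cleanest way to organize the proof, and in the paper's application (where $f(s)=\zeta_A(1+s)\log q$ and the shifts are small) the additional analyticity needed to inflate the contours is actually available.
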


%Using this Lemma, we can reformulate Theorem~\ref{thm:Sp} as
%\begin{multline}
%\int_{USp(2N)}\frac{\prod_{k=1}^K\Lambda_A(e^{-\alpha_k})}
%{\prod_{q=1}^Q \Lambda_A(e^{-\gamma_q})}dA
%=e^{-\frac N 2 \sum_{k=1}^K \alpha_k}
%\frac{(-1)^{K(K-1)/2}2^K}{K!  (2\pi i)^{K }}\\
%\times \int\limits_{|z_i|=1}\frac{h_S(z_1, \dots,z_K;\gamma)
%\Delta(z_1^2,\dots,z_{K }^2)^2\prod_{k=1}^K z_k}
%{\prod_{j=1}^{K }\prod_{k=1}^{K }
%(z_k-\alpha_j)(z_k+\alpha_j)}
%\,dz_1\dots dz_{K }.
%\end{multline}

Now we are in a position to present the final form of the ratios conjecture for $L$--functions over functions fields using the contour integrals introduced above. Conjecture \ref{conj:ratiosconj} can be written as follows.

\begin{conj}
Suppose that the real parts of $\alpha_{k}$ and $\gamma_{m}$ are positive. Then
\begin{multline}
\label{eq:6.5.1}
\sum_{D\in\mathcal{H}_{2g+1,q}}\frac{\prod_{k=1}^{K}L(\tfrac{1}{2}+\alpha_{k},\chi_{D})}{\prod_{m=1}^{Q}L(\tfrac{1}{2}+\gamma_{m},\chi_{D})}=\sum_{D\in\mathcal{H}_{2g+1,q}}|D|^{-\tfrac{1}{2}\sum_{k=1}^{K}\alpha_{k}}\frac{(-1)^{K(K-1)/2}2^K}{K!(2\pi i)^{K }}\\
\ \ \ \ \ \ \ \ \ \ \ \ \ \ \ \ \ \times \int\limits_{|z_i|=1}\frac{H_{\mathcal{D},|D|,\alpha,\gamma}(z_1, \dots,z_K;\gamma )
\Delta(z_1^2,\dots,z_{K }^2)^2\prod_{k=1}^K z_k}
{\prod_{j=1}^{K }\prod_{k=1}^{K }
(z_k-\alpha_j)(z_k+\alpha_j)}
\,dz_1\dots dz_{K }
 +o(|D|).
\end{multline}
\end{conj}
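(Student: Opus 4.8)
The plan is to obtain \eqref{eq:6.5.1} from the combinatorial form \eqref{eq:6.3.21} of the ratios conjecture by a direct application of the contour-integral lemma of Conrey, Farmer and Zirnbauer quoted just above. No new heuristic input is needed: \eqref{eq:6.3.21} already expresses the conjectural main term as a sum over $\epsilon\in\{-1,1\}^{K}$, and the Lemma rewrites precisely such a sign-sum as a $K$-fold contour integral. The entire task is therefore to verify that the summand $H_{\mathcal{D},|D|,\alpha,\gamma}$ meets the Lemma's hypotheses and then to read off the integral.

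First I would set $f(s)=\zeta_{A}(1+s)\log q$ and record that, since $\zeta_{A}(s)=1/(1-q^{1-s})$ gives $1-q^{-s}=s\log q+O(s^{2})$ near $s=0$, the function $f$ has a simple pole of residue $1$ at $s=0$ and is otherwise analytic near the origin; this is the normalization already used in the integral-moments recipe. Writing $w=(w_{1},\dots,w_{K})$ for the arguments, I would then exhibit the factorization
\[
H_{\mathcal{D},|D|,\alpha,\gamma}(w)=F(w)\prod_{1\le j\le k\le K}f(w_{j}+w_{k}),
\]
where
\[
F(w)=(\log q)^{-K(K+1)/2}\,|D|^{\frac12\sum_{k=1}^{K}w_{k}}\,\Big(\prod_{k=1}^{K}X\big(\tfrac12+\tfrac{\alpha_{k}-w_{k}}{2}\big)\Big)\,\frac{\prod_{m<r\le Q}\zeta_{A}(1+\gamma_{m}+\gamma_{r})}{\prod_{k=1}^{K}\prod_{m=1}^{Q}\zeta_{A}(1+w_{k}+\gamma_{m})}\,A_{\mathcal{D}}(w;\gamma),
\]
the factor $\prod_{1\le j\le k\le K}\zeta_{A}(1+w_{j}+w_{k})$ from $Y(w;\gamma)$ having been absorbed into $\prod_{j\le k}f(w_{j}+w_{k})$ (which accounts for the normalizing power of $\log q$). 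The crucial point, and where the function field setting is simpler than the number field one, is that because $X(s)=q^{-1/2+s}$ the otherwise delicate archimedean factor collapses to
\[
\prod_{k=1}^{K}X\Big(\tfrac12+\tfrac{\alpha_{k}-w_{k}}{2}\Big)=q^{\frac12\sum_{k=1}^{K}(\alpha_{k}-w_{k})}=q^{\frac12\sum_{k}\alpha_{k}}\,q^{-\frac12\sum_{k}w_{k}},
\]
which is a symmetric function of $w$ times a $w$-independent constant. Together with the symmetry of $|D|^{\frac12\sum w_{k}}$, of the remaining $\zeta_{A}$-quotient and of $A_{\mathcal{D}}(w;\gamma)$, this shows $F$ is symmetric; regularity of $F$ near $w=0$ follows since $\Re(\gamma_{m})>0$ keeps the denominators $\zeta_{A}(1+w_{k}+\gamma_{m})$ away from their poles while $A_{\mathcal{D}}(w;\gamma)$ is an absolutely convergent Euler product in a neighbourhood of the origin.

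With the hypotheses in place, I would apply the first (unsigned) identity of the Lemma, in the product-over-$1\le j\le k\le K$ form, to the inner sum $\sum_{\epsilon\in\{-1,1\}^{K}}H_{\mathcal{D},|D|,\alpha,\gamma}(\epsilon_{1}\alpha_{1},\dots,\epsilon_{K}\alpha_{K};\gamma)$ appearing in \eqref{eq:6.3.21}, taking the contours $|z_{i}|=1$ (which enclose the points $\pm\alpha_{j}$ for sufficiently small shifts). This turns the sign-sum into
\[
\frac{(-1)^{K(K-1)/2}2^{K}}{K!(2\pi i)^{K}}\int\limits_{|z_{i}|=1}\frac{H_{\mathcal{D},|D|,\alpha,\gamma}(z_{1},\dots,z_{K};\gamma)\,\Delta(z_{1}^{2},\dots,z_{K}^{2})^{2}\prod_{k=1}^{K}z_{k}}{\prod_{j=1}^{K}\prod_{k=1}^{K}(z_{k}-\alpha_{j})(z_{k}+\alpha_{j})}\,dz_{1}\cdots dz_{K},
\]
and inserting this back into \eqref{eq:6.3.21}, with the prefactor $|D|^{-\frac12\sum_{k}\alpha_{k}}$ and the outer sum over $D\in\mathcal{H}_{2g+1,q}$ carried along unchanged, produces \eqref{eq:6.5.1} exactly.

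The main obstacle is conceptual rather than computational: one must confirm that $H_{\mathcal{D},|D|,\alpha,\gamma}$, which is written with $\alpha$ as a fixed parameter entering the $X$-factor asymmetrically, is genuinely a symmetric function of the integration variables $w$, so that the Lemma is applicable. This is exactly what the identity $X(\tfrac12+\tfrac{\alpha_{k}-w_{k}}{2})=q^{(\alpha_{k}-w_{k})/2}$ secures; had the archimedean factor been the classical gamma quotient $g_{+}$, additional bookkeeping (as in \cite{CFZ}) would be required. The only other point deserving care is ensuring that $f$ and $F$ are simultaneously analytic in a common neighbourhood of the origin, which is where the positivity hypotheses $\Re(\alpha_{k}),\Re(\gamma_{m})>0$ and the smallness conditions $|\alpha_{k}|<1$ of the Lemma enter.
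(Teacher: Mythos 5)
Your proposal is correct and is essentially the paper's own derivation: the paper obtains \eqref{eq:6.5.1} precisely by applying the quoted Conrey--Farmer--Zirnbauer contour-integral lemma (first, unsigned identity) to the sign-sum over $\epsilon\in\{-1,1\}^{K}$ appearing in the combinatorial form \eqref{eq:6.3.21} of the ratios conjecture. Your additional verification that $H_{\mathcal{D},|D|,\alpha,\gamma}$ factors as $F\prod_{1\le j\le k\le K}f(w_{j}+w_{k})$ with $f(s)=\zeta_{A}(1+s)\log q$ of residue $1$, and that $F$ is symmetric and regular near the origin (using $X(s)=q^{-1/2+s}$ so the archimedean factor collapses to $q^{\frac12\sum_{k}(\alpha_{k}-w_{k})}$), only makes explicit the hypotheses that the paper applies without comment.
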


\begin{rmk}
If we compare the formula \eqref{eq:6.5.1} with the formula (6.31) presented in \cite{CFZ} we can see clearly the analogy between the classical conjecture and its tranlation for function fields.
\end{rmk}

\section{One--Level Density}

In this section we present an application of the Ratios Conjecture \ref{conj:ratiosconj} for $L$--functions over function fields: we derive a formula the one--level density. The ideas and calculations presented in this section can be seen as a translation to the function field language of the calculations presented in \cite{CS} and \cite{HKS}. 

Our goal is to consider 

\begin{equation}
R_{D}(\alpha;\gamma)=\sum_{D\in\mathcal{H}_{2g+1,q}}\frac{L(\tfrac{1}{2}+\alpha,\chi_{D})}{L(\tfrac{1}{2}+\gamma,\chi_{D})}.
\end{equation}

In this case the conjecture is

\begin{conj}
\label{conj:1-ratio}
With $-\frac{1}{4}<\mathfrak{R}(\alpha)<\frac{1}{4}$, $\frac{1}{\log|D|}\ll\mathfrak{R}(\gamma)<\frac{1}{4}$ and $\mathfrak{I}(\alpha),\mathfrak{I}(\gamma)\ll_{\varepsilon}|D|^{1-\varepsilon}$ for every $\varepsilon>0$, we have
\begin{multline}
R_{D}(\alpha;\gamma)=\sum_{D\in\mathcal{H}_{2g+1,q}}\frac{L(\tfrac{1}{2}+\alpha,\chi_{D})}{L(\tfrac{1}{2}+\gamma,\chi_{D})}\\
=\sum_{D\in\mathcal{H}_{2g+1,q}}\left(\frac{\zeta_{A}(1+2\alpha)}{\zeta_{A}(1+\alpha+\gamma)}A_{D}(\alpha;\gamma)+|D|^{-\alpha}X(\tfrac{1}{2}+\alpha)\frac{\zeta_{A}(1-2\alpha)}{\zeta_{A}(1-\alpha+\gamma)}A_{D}(-\alpha;\gamma)\right)+o(|D|),
\end{multline}
where
\begin{equation}
\label{eq:AD}
A_{D}(\alpha;\gamma)=\prod_{\substack{P \ \mathrm{monic} \\ \mathrm{irreducible}}}\left(1-\frac{1}{|P|^{1+\alpha+\gamma}}\right)^{-1}\left(1-\frac{1}{(|P|+1)|P|^{1+2\alpha}}-\frac{1}{(|P|+1)|P|^{\alpha+\gamma}}\right).
\end{equation}
\end{conj}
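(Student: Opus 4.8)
The plan is to obtain Conjecture \ref{conj:1-ratio} as the specialization $K=Q=1$ of the general ratios Conjecture \ref{conj:ratiosconj}, combined with the closed-form expression for the Euler product $A_{\mathcal{D}}$ furnished by the Corollary in Section 6.1. Setting $\alpha_1=\alpha$ and $\gamma_1=\gamma$, the outer sum over $\epsilon\in\{-1,1\}^K$ in \eqref{eq:6.3.20c} collapses to the two values $\epsilon_1=\pm1$, so the right-hand side becomes a sum of exactly two terms indexed by the sign of $\alpha$.

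First I would evaluate the elementary prefactors. For $\epsilon_1=+1$ we have $|D|^{\frac{1}{2}(\alpha-\alpha)}=1$ and $X\!\left(\tfrac{1}{2}+\tfrac{\alpha-\alpha}{2}\right)=X(\tfrac{1}{2})=q^{0}=1$, so this term contributes $Y(\alpha;\gamma)A_{\mathcal{D}}(\alpha;\gamma)$; for $\epsilon_1=-1$ the prefactor is $|D|^{-\alpha}X(\tfrac{1}{2}+\alpha)$, giving $|D|^{-\alpha}X(\tfrac{1}{2}+\alpha)Y(-\alpha;\gamma)A_{\mathcal{D}}(-\alpha;\gamma)$. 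Next I would specialize $Y$: with $K=Q=1$ the product $\prod_{j\leq k\leq1}$ reduces to the single factor $\zeta_A(1+2\alpha)$, the product $\prod_{m<r\leq1}$ is empty, and the denominator is $\zeta_A(1+\alpha+\gamma)$, so $Y(\alpha;\gamma)=\zeta_A(1+2\alpha)/\zeta_A(1+\alpha+\gamma)$ and likewise $Y(-\alpha;\gamma)=\zeta_A(1-2\alpha)/\zeta_A(1-\alpha+\gamma)$. These already match the zeta ratios appearing in the statement.

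The main step, and the only genuine computation, is to verify that the closed-form $A_{\mathcal{D}}$ of the Corollary, specialized to $K=Q=1$, agrees prime-by-prime with the Euler factor in \eqref{eq:AD}. Writing $a=|P|^{-1/2-\alpha}$ and $b=|P|^{-1/2-\gamma}$, I would combine the two rational expressions in the closed form via
\[
\frac{1}{2}\frac{1-b}{1-a}+\frac{1}{2}\frac{1+b}{1+a}=\frac{1-ab}{1-a^{2}},
\]
so that, using $ab=|P|^{-1-\alpha-\gamma}$ and $a^{2}=|P|^{-1-2\alpha}$, the bracketed factor becomes $(1-|P|^{-1-\alpha-\gamma})/(1-|P|^{-1-2\alpha})+|P|^{-1}$. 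Multiplying by the remaining factors $(1-|P|^{-1-2\alpha})$ and $(1+|P|^{-1})^{-1}=|P|/(|P|+1)$ and clearing denominators collapses the expression to $1-\frac{|P|^{-1-2\alpha}}{|P|+1}-\frac{|P|^{-\alpha-\gamma}}{|P|+1}$, which is exactly the Euler factor in \eqref{eq:AD} after extracting the common $(1-|P|^{-1-\alpha-\gamma})^{-1}$. Assembling the two $\epsilon$-terms then yields the stated formula with error $o(|D|)$.

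The derivation is therefore routine once the general conjecture is in hand; the only point requiring care is the algebraic collapse of the Euler factor, where one must track the cancellation of the $(1-|P|^{-1-2\alpha})$ factors against the numerator of the combined half-ratios. The stated analyticity hypotheses on $\alpha$ and $\gamma$ ensure that every Euler product converges and that the analytic continuation carrying the identity from the positivity region of Conjecture \ref{conj:ratiosconj} into the range $-\tfrac14<\Re(\alpha)<\tfrac14$ is justified.
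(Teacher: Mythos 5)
Your proposal is correct and follows essentially the same route as the paper: Conjecture \ref{conj:1-ratio} is precisely the $K=Q=1$ specialization of Conjecture \ref{conj:ratiosconj} together with the closed-form Corollary for $A_{\mathcal{D}}$, and your algebraic collapse of the Euler factor (via $\tfrac{1}{2}\tfrac{1-b}{1-a}+\tfrac{1}{2}\tfrac{1+b}{1+a}=\tfrac{1-ab}{1-a^{2}}$ with $a=|P|^{-1/2-\alpha}$, $b=|P|^{-1/2-\gamma}$) checks out exactly to the factor in \eqref{eq:AD}. The paper simply states this specialization without writing out the Euler-product verification, so your computation supplies the details it leaves implicit.
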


To obtain the formula for the one-level density from the ratios conjecture, we note that

\begin{equation}
\sum_{D\in\mathcal{H}_{2g+1,q}}\frac{L^{'}(\tfrac{1}{2}+r,\chi_{D})}{L(\tfrac{1}{2}+r,\chi_{D})}=\frac{d}{d\alpha}R_D(\alpha;\gamma)\bigg|_{\alpha=\gamma=r}.
\end{equation}

Now, a direct calculation gives

\begin{equation}
\frac{d}{d\alpha}
 \frac{\zeta_{A}(1+2\alpha)}{\zeta_{A}(1+\alpha+\gamma)}
A_D(\alpha;\gamma)\bigg|_{\alpha=\gamma=r}=\frac{\zeta_{A}'(1+2r)}{\zeta_{A}(1+2r)}A_D(r;r)+A_D'(r;r)
\end{equation}
and a simple use of the quotient rule give us that
\begin{multline}
\frac{d}{d\alpha}
\left(|D|^{-\alpha}X(\tfrac{1}{2}+\alpha)\frac{\zeta_{A}(1-2\alpha)}{\zeta_{A}(1-\alpha+\gamma)}
A_D(-\alpha;\gamma)\right)\bigg|_{\alpha=\gamma=r}\\
=-(\log q)|D|^{-r}X(\tfrac{1}{2}+r)\zeta_{A}(1-2r)A_{D}(-r;r).
\end{multline}
Also, 
\begin{equation}
A_{D}(r;r)=1,
\end{equation}
\begin{equation}
A_{D}(-r;r)=\prod_{\substack{P \ \mathrm{monic} \\ \mathrm{irreducible}}}\left(1-\frac{1}{|P|}\right)^{-1}\left(1-\frac{1}{(|P|+1)|P|^{1-2r}}-\frac{1}{(|P|+1)}\right)
\end{equation}
and using the logarithmic--derivative formula we can easily obtain that,
\begin{equation}
A_{D}^{'}(r;r)=\sum_{\substack{P \ \mathrm{monic} \\ \mathrm{irreducible}}}\frac{\log|P|}{(|P|^{1+2r}-1)(|P|+1)}.
\end{equation}

Thus, the ratios conjecture implies that the following holds

\begin{thm}
\label{thm:ratio1}
Assuming Conjecture \ref{conj:1-ratio}, $\tfrac{1}{\log|D|}\ll\mathfrak{R}(r)<\frac{1}{4}$ and $\mathfrak{I}(r)\ll_{\varepsilon}|D|^{1-\varepsilon}$ we have
\begin{eqnarray}
& & \sum_{D\in\mathcal{H}_{2g+1,q}}\frac{L^{'}(\tfrac{1}{2}+r,\chi_{D})}{L(\tfrac{1}{2}+r,\chi_{D})}\nonumber\\
& = &\sum_{D\in\mathcal{H}_{2g+1,q}}\left(\frac{\zeta_{A}'(1+2r)}{\zeta_{A}(1+2r)}+A_D'(r;r)-(\log q)|D|^{-r}X(\tfrac{1}{2}+r)\zeta_{A}(1-2r)A_{D}(-r;r)\right)\nonumber\\
& + & o(|D|),
\end{eqnarray}
where $A_{D}(\alpha;\gamma)$ is defined in \eqref{eq:AD}.
\end{thm}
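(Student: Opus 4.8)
The plan is to deduce Theorem \ref{thm:ratio1} directly from the one--level ratios Conjecture \ref{conj:1-ratio} by logarithmic differentiation in the shift $\alpha$. First I would invoke the identity
\[
\sum_{D\in\mathcal{H}_{2g+1,q}}\frac{L'(\tfrac12+r,\chi_D)}{L(\tfrac12+r,\chi_D)}=\frac{d}{d\alpha}R_D(\alpha;\gamma)\Big|_{\alpha=\gamma=r},
\]
which holds because differentiating each summand $L(\tfrac12+\alpha,\chi_D)/L(\tfrac12+\gamma,\chi_D)$ with respect to $\alpha$ and then specialising to $\alpha=\gamma=r$ reproduces the logarithmic derivative $L'/L(\tfrac12+r,\chi_D)$. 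This reduces matters to differentiating the right--hand side of Conjecture \ref{conj:1-ratio}, which is a sum of two terms, and then using the explicit evaluations of $A_D(r;r)$, $A_D(-r;r)$ and $A_D'(r;r)$ recorded above.

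For the first term I would apply the product rule to $\frac{\zeta_A(1+2\alpha)}{\zeta_A(1+\alpha+\gamma)}A_D(\alpha;\gamma)$. At $\alpha=\gamma=r$ the ratio of zeta factors equals $1$ and its logarithmic derivative collapses to $\zeta_A'(1+2r)/\zeta_A(1+2r)$, so that, using $A_D(r;r)=1$, this term contributes $\zeta_A'(1+2r)/\zeta_A(1+2r)+A_D'(r;r)$, matching the first two terms in the statement.

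The second term is where the genuine content lies, and I expect it to be the main obstacle. Write the second summand as $g(\alpha)/\zeta_A(1-\alpha+\gamma)$ with $g(\alpha)=|D|^{-\alpha}X(\tfrac12+\alpha)\zeta_A(1-2\alpha)A_D(-\alpha;\gamma)$ regular at $\alpha=\gamma$. The crucial function--field feature is the exact identity $1/\zeta_A(1+w)=1-q^{-w}$, which follows from $\zeta_A(s)=1/(1-q^{1-s})$; hence $1/\zeta_A(1-\alpha+\gamma)=1-q^{\alpha-\gamma}$ has a simple zero at $\alpha=\gamma$. Consequently the whole second summand vanishes at $\alpha=\gamma=r$, and when applying the product rule the term in which $g$ (and in particular the $|D|^{-\alpha}$ factor) is differentiated is annihilated by the factor $1-q^{0}=0$. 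Only the term differentiating the vanishing factor survives, and since $\frac{d}{d\alpha}(1-q^{\alpha-\gamma})\big|_{\alpha=\gamma}=-\log q$, it produces precisely $-(\log q)|D|^{-r}X(\tfrac12+r)\zeta_A(1-2r)A_D(-r;r)$.

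Finally I would add the two contributions and carry the $o(|D|)$ remainder through the finite outer sum over $D\in\mathcal{H}_{2g+1,q}$. The one point requiring care is that differentiation in $\alpha$ must be compatible with the asymptotic in Conjecture \ref{conj:1-ratio}, i.e.\ the $o(|D|)$ error must persist after applying $\partial_\alpha$; this is justified by the analyticity and the uniform growth hypotheses imposed on $\alpha,\gamma$ in the conjecture, which permit a Cauchy integral estimate for the derivative of the error on a small circle about $\alpha=r$.
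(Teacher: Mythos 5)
Your proposal is correct and follows essentially the same route as the paper: differentiate $R_D(\alpha;\gamma)$ in $\alpha$, set $\alpha=\gamma=r$, evaluate the first summand by the product rule (using that the zeta ratio equals $1$ there and $A_D(r;r)=1$) and the second by observing that $1/\zeta_A(1-\alpha+\gamma)$ vanishes at $\alpha=\gamma$ so only the differentiated vanishing factor survives, contributing the $-(\log q)|D|^{-r}X(\tfrac12+r)\zeta_A(1-2r)A_D(-r;r)$ term. Your explicit use of $1/\zeta_A(1+w)=1-q^{-w}$ and the Cauchy-estimate remark about differentiating the $o(|D|)$ error are slightly more careful than the paper's ``simple use of the quotient rule,'' but the argument is the same.
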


Now we are in a position to derive the formula for the one--level density for the zeros of quadratic Dirichlet $L$--functions over function fields, complete with lower order terms.

Let $\gamma_{D}$ denote the ordinate of a generic zero of $L(s,\chi_{D})$ on the half--line (remember that here, unlike in the number field case, we do not need to assume that all of the complex zeros are on the half--line, because the Riemann hypothesis is established for this family of $L$--functions). As $L(s,\chi_{D})$ is a functions of $q^{-s}$ and so is periodic with period $2\pi i/\log q$ we can confine our analysis of the zeros to $-\pi i/\log q\leq\mathfrak{I}(s)\leq\pi i/\log q$. 

We consider the one--level density

\begin{equation}
S_{1}(f):=\sum_{D\in\mathcal{H}_{2g+1,q}}\sum_{\gamma_{D}}f(\gamma_{D})
\end{equation}
where $f$ is an $(2\pi/\log q)$--periodic even test function and holomorphic.

By Cauchy's theorem we have

\begin{equation}
S_{1}(f)=\sum_{D\in\mathcal{H}_{2g+1,q}}\frac{1}{2\pi i}\left(\int_{(c)}-\int_{(1-c)}\right)\frac{L^{'}(s,\chi_{D})}{L(s,\chi_{D})}f(-i(s-1/2))ds
\end{equation}
where $(c)$ denotes a vertical line from $c-\pi i/\log q$ to $c+\pi i/\log q$ and $3/4>c>1/2+1/\log|D|$. The integral on the $c$--line is

\begin{equation}
\frac{1}{2\pi}\int_{-\pi/\log q}^{\pi/\log q}f(t-i(c-1/2))\sum_{D\in\mathcal{H}_{2g+1,q}}\frac{L^{'}(1/2+(c-1/2+it),\chi_{D})}{L(1/2+(c-1/2+it),\chi_{D})}dt.
\end{equation} 

The sum over $D$ can be replaced by Theorem \ref{thm:ratio1} (see the 1--level density section of \cite{CS} for a more detailed analysis). Next we move the path of integration to $c=1/2$ as the integrand is regular at $t=0$ to obtain

\begin{multline}
\frac{1}{2\pi}\int_{-\pi/\log q}^{\pi/\log q}f(t)\sum_{D\in\mathcal{H}_{2g+1,q}}\Bigg(\frac{\zeta_{A}^{'}(1+2it)}{\zeta_{A}(1+2it)}+A_{D}^{'}(it;it)\\
-(\log q)|D|^{it}X(\tfrac{1}{2}+it)\zeta_{A}(1-2it)A_{D}(-it;it)\Bigg)dt+o(|D|).
\end{multline} 

For the integral on the $1-c$--line, we change variables, letting $s\rightarrow1-s$, and we use the functional equation \eqref{eq:functionalequation} to obtain

\begin{equation}
\frac{L^{'}(1-s,\chi_{D})}{L(1-s,\chi_{D})}=\frac{\mathcal{X}_{D}^{'}(s)}{\mathcal{X}_{D}(s)}-\frac{L^{'}(s,\chi_{D})}{L(s,\chi_{D})},
\end{equation}
where
\begin{equation}
\frac{\mathcal{X}_{D}^{'}(s)}{\mathcal{X}_{D}(s)}=-\log|D|+\frac{X^{'}}{X}(s).
\end{equation}

We thus obtain, finally, the following theorem.

\begin{thm}
Assuming the Ratios Conjecture \ref{conj:1-ratio}, the one--level density for the zeros of the family of quadratic Dirichlet $L$--functions associated with hyperelliptic curves given by the affine equation $C_{D}:y^{2}=D(x)$, where $D\in\mathcal{H}_{2g+1,q}$ is given by
\begin{multline}
S_{1}(f)=\sum_{D\in\mathcal{H}_{2g+1,q}}\sum_{\gamma_{D}}f(\gamma_{D})\\
=\frac{1}{2\pi}\int_{-\pi/\log q}^{\pi/\log q}f(t)\sum_{D\in\mathcal{H}_{2g+1,q}}\Bigg(\log|D|+\frac{X^{'}}{X}(\tfrac{1}{2}-it)+2\Bigg(\frac{\zeta_{A}^{'}(1+2it)}{\zeta_{A}(1+2it)}+A_{D}^{'}(it;it)\\
-(\log q)|D|^{it}X(\tfrac{1}{2}+it)\zeta_{A}(1-2it)A_{D}(-it;it)\Bigg)\Bigg)dt+o(|D|)
\end{multline}
where $\gamma_{D}$ is the ordinate of a generic zero of $L(s,\chi_{D})$ and $f$ is an even and periodic nice test function.
\end{thm}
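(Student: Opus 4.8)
The plan is to recover $S_{1}(f)$ from the logarithmic derivative of $L(s,\chi_{D})$ via the argument principle and then feed in the one-level input already prepared in Theorem \ref{thm:ratio1}. Since each $L(s,\chi_{D})$ is a polynomial in $q^{-s}$, hence $(2\pi i/\log q)$-periodic, I would first record, by Cauchy's theorem,
\begin{equation}
S_{1}(f)=\sum_{D\in\mathcal{H}_{2g+1,q}}\frac{1}{2\pi i}\left(\int_{(c)}-\int_{(1-c)}\right)\frac{L'(s,\chi_{D})}{L(s,\chi_{D})}\,f(-i(s-\tfrac12))\,ds,
\end{equation}
where $(c)$ denotes the vertical segment from $c-\pi i/\log q$ to $c+\pi i/\log q$ and $3/4>c>1/2+1/\log|D|$, so that all zeros (which lie on $\mathfrak{R}(s)=1/2$ by the Riemann Hypothesis for curves) are enclosed in one period box. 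The evenness and periodicity of $f$ ensure that exactly the ordinates $\gamma_{D}$ of one period contribute and that the top and bottom edges of the box cancel.

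For the right-hand line I would parametrize $s=c+it$ and invoke Theorem \ref{thm:ratio1} to replace $\sum_{D}L'/L$ by its ratios prediction; as the resulting integrand is regular at $t=0$ (the apparent pole of $\zeta_{A}'(1+2\alpha)/\zeta_{A}(1+\alpha+\gamma)$ cancelling, on setting $\alpha=\gamma=r$, against the reflected $|D|^{-\alpha}$ term), the contour may then be moved to the critical line $c=1/2$. This yields the summand $\frac{\zeta_{A}'(1+2it)}{\zeta_{A}(1+2it)}+A_{D}'(it;it)-(\log q)|D|^{it}X(\tfrac12+it)\zeta_{A}(1-2it)A_{D}(-it;it)$. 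For the $(1-c)$-line I would change variables $s\mapsto 1-s$ and apply the functional equation \eqref{eq:functionalequation} in the form
\begin{equation}
\frac{L'(1-s,\chi_{D})}{L(1-s,\chi_{D})}=\frac{\mathcal{X}_{D}'(s)}{\mathcal{X}_{D}(s)}-\frac{L'(s,\chi_{D})}{L(s,\chi_{D})},\qquad \frac{\mathcal{X}_{D}'(s)}{\mathcal{X}_{D}(s)}=-\log|D|+\frac{X'}{X}(s);
\end{equation}
because $f$ is even, the factor $f(-i(s-\tfrac12))$ is invariant under this reflection, so the test function is undisturbed and the second integral is brought back onto the same line as the first.

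Combining the two contributions, the $-L'/L$ produced by the functional equation reinforces the $+L'/L$ of the $(c)$-line, which is the source of the overall factor of $2$ multiplying the ratios expression, while the surviving $\mathcal{X}_{D}'/\mathcal{X}_{D}$ term supplies the $\log|D|$ and $\frac{X'}{X}(\tfrac12-it)$ contributions; assembling these reproduces exactly the stated formula (this bookkeeping mirrors the number-field derivations in \cite{CS} and \cite{HKS}). I expect the main obstacle to be the uniform, rigorous use of Theorem \ref{thm:ratio1} along the whole contour together with the shift to $c=1/2$: one must verify that the ratios input is legitimate throughout the range $\tfrac{1}{\log|D|}\ll\mathfrak{R}(r)<\tfrac14$, that the singularities of the separate $\zeta_{A}$-factors at $t=0$ genuinely cancel so the shifted $t$-integral converges, and that the aggregate error remains $o(|D|)$ after summing over the full family $\mathcal{H}_{2g+1,q}$ and integrating in $t$ over the period.
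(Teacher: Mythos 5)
Your proposal is correct and follows essentially the same route as the paper's own derivation: Cauchy's theorem over one period box with the contours $(c)$ and $(1-c)$ for $3/4>c>1/2+1/\log|D|$, insertion of Theorem \ref{thm:ratio1} on the $(c)$-line followed by the shift to $c=1/2$ (legitimate because the poles of $\frac{\zeta_{A}^{'}(1+2r)}{\zeta_{A}(1+2r)}$ and of $-(\log q)|D|^{-r}X(\tfrac{1}{2}+r)\zeta_{A}(1-2r)A_{D}(-r;r)$ cancel at $r=0$), and the reflection $s\mapsto 1-s$ combined with the functional equation $\frac{L^{'}(1-s,\chi_{D})}{L(1-s,\chi_{D})}=\frac{\mathcal{X}_{D}^{'}(s)}{\mathcal{X}_{D}(s)}-\frac{L^{'}(s,\chi_{D})}{L(s,\chi_{D})}$ and the evenness of $f$ on the $(1-c)$-line, which is exactly how the paper produces the factor $2$ and the $\log|D|+\frac{X^{'}}{X}$ terms. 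No essential step is missing.
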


Defining

\begin{equation}
f(t)=h\left(\frac{t(2g\log q)}{2\pi}\right)
\end{equation}
we now scale the variable $t$ as
\begin{equation}
\tau=\frac{t(2g\log q)}{2\pi}
\end{equation}
and get after a change of variables

\begin{multline}
\sum_{D\in\mathcal{H}_{2g+1,q}}\sum_{\gamma_{D}}h\left(\gamma_{D}\frac{t(2g\log q)}{2\pi}\right)\\
=\frac{1}{2g\log q}\int_{-g}^{g}h(\tau)\sum_{D\in\mathcal{H}_{2g+1,q}}\Bigg(\log|D|+\frac{X^{'}}{X}\left(\tfrac{1}{2}-\tfrac{2\pi i\tau}{2g\log q}\right)+2\Bigg(\frac{\zeta_{A}^{'}(1+\tfrac{4\pi i\tau}{2g\log q})}{\zeta_{A}(1+\tfrac{4\pi i\tau}{2g\log q})}\\
+A_{D}^{'}\left(\tfrac{2\pi i\tau}{2g\log q};\tfrac{2\pi i\tau}{2g\log q}\right)-(\log q)e^{-(2\pi i\tau/2g\log q)\log|D|}\\
\times X\left(\tfrac{1}{2}+\tfrac{2\pi i\tau}{2g\log q}\right)\zeta_{A}\left(1-\tfrac{4\pi i\tau}{2g\log q}\right)A_{D}\left(\tfrac{2\pi i\tau}{2g\log q};\tfrac{2\pi i\tau}{2g\log q}\right)\Bigg)\Bigg)d\tau+o(|D|).
\end{multline}

Writing

\begin{equation}
\zeta_{A}(1+s)=\frac{1/\log q}{s}+\frac{1}{2}+\frac{1}{12}(\log q)s+O(s^{2})
\end{equation}

we have,

\begin{equation}
\frac{\zeta_{A}^{'}(1+s)}{\zeta_{A}(1+s)}=-s^{-1}+\frac{1}{2}\log q-\frac{1}{12}(\log q)^{2}s+O(s^{3}).
\end{equation}

As $g\rightarrow\infty$ only the $\log|D|$ term, the $\zeta_{A}^{'}/\zeta_{A}$ term, and the final term in the integral contribute, yielding the asymptotic

\begin{multline}
\sum_{D\in\mathcal{H}_{2g+1,q}}\sum_{\gamma_{D}}h\left(\gamma_{D}\frac{t(2g\log q)}{2\pi}\right)\\
\sim\frac{1}{2g\log q}\int_{-\infty}^{\infty}h(\tau)\Bigg((\#\mathcal{H}_{2g+1,q})\log|D|-\frac{(\#\mathcal{H}_{2g+1,q})(2g\log q)}{2\pi i\tau}+(\#\mathcal{H}_{2g+1,q})\frac{e^{-2\pi i\tau}}{2\pi i\tau}(2g\log q)\Bigg)d\tau
\end{multline}

But, since $h$ is an even function, the middle term above drops out and the last term can be duplicated with a change of sign of $\tau$, leaving

\begin{equation}
\lim_{g\rightarrow\infty}\frac{1}{\#\mathcal{H}_{2g+1,q}}\sum_{D\in\mathcal{H}_{2g+1,q}}\sum_{\gamma_{D}}h\left(\gamma_{D}\frac{t(2g\log q)}{2\pi}\right)=\int_{-\infty}^{\infty}h(\tau)\left(1-\frac{\sin(2\pi\tau)}{2\pi\tau}\right)d\tau.
\end{equation}

Thus for $q$ fixed and in the large $g$ limit, the one--level density of the scaled zeros has the same form as the one--level density of the eigenvalues of the matrices from $USp(2g)$ chosen with respect to Haar measure and so our result is in agreement with results obtained previously by Rudnick \cite{Ru}.

\section{Acknowledgments}
We would like to thank Brian Conrey, Ze\'{e}v Rudnick and Nina Snaith for very helpful discussions during the course of this project, and the referee for detailed and helpful comments.

\end{document}